\author{  Saadaoui Nejib \thanks{ Université de Gabès , Institut Supérieur d’Informatique de Médenine,  Route Djerba Km 3, Boite Postale N 283, 4100 Medenine, Tunis,\qquad \textbf{nejib.saadaoui@fsg.rnu.tn}}}
\title{Cohomology of Heisenberg Hom-Lie algebras}
\newtheorem{theorem}{Theorem}[section]
\newtheorem{cor}[theorem]{Corollary}
\newtheorem{thm}[theorem]{Theorem}
\newtheorem{lem}[theorem]{Lemma}
\newtheorem{prop}[theorem]{Proposition}
\newtheorem{example}[theorem]{Example}
\newtheorem{defn}[theorem]{Definition}
\newtheorem{remq}[theorem]{Remark}
\numberwithin{equation}{section}
\newtheorem{Case}{\textbf{Case}}
\newcommand{\N}{\mathbb{N}}
\newcommand{\C}{\mathbb{C}}
\newcommand{\Z}{\mathbb{Z}}
\newcommand{\R}{\mathbb{R}}
\newcommand{\crm}[1]{\romannumeral #1}
\newcommand{\intervalle}[4]{\mathopen{#1}#2
	\mathclose{}\mathpunct{};#3
	\mathclose{#4}}
\newcommand{\intervalleff}[2]{\intervalle{[}{#1}{#2}{]}}
\newcommand{\intervallefo}[2]{\intervalle{[}{#1}{#2}{[}}
\begin{document} 
	\maketitle
	\begin{abstract}
	In this paper, we define the Heisenberg Hom-Lie algebra. We determine the minimal dimension of faithful representation for Heisenberg Hom-Lie algebra.We study the  adjoint representation, the trivial representation and the faithful representation of Heisenberg Hom-Lie algebra. \\

		\begin{small}\textbf{Keywords} : Heisenberg Hom-Lie algebra, Symplectic, Oscillator, Faithful representation, extension, derivation, cohomology.\end{small}
	\end{abstract}
\section*{Introduction}
Heisenberg Algebras are used in many fields in mathematics and physics.
In classical mechanics the state of a particle at a given time $t$ is determined
by its position vector $q \in \R^3$ and its momentum vector $p \in \R^3$
. Heisenberg’s
crucial idea that lead to quantum mechanics was to take the components of
these vectors to be operators on a Hilbert space $ \mathcal{H} $, satisfying the commutation
relations
\[ [Q_i,Q_j]=0,\, [P_i,P_j]=0, \ [P_i,Q_j]=-i\,\,\hslash\,\delta_{i,j}\]
for $ i,\ j=1,\, 2,\, 3. $\\
   An algebra generated by $2n+1$ elements $\{P_1,\cdots,P_n, Q_1, \cdots, Q_n,\,\hslash\}$ satisfying
the relations above will be called a Heisenberg algebra and denoted $ \hslash(n) $. For more details, see
 \cite{quantum,Mechanics,polynome,introduction}.

The paper is organized as follows. Section $1$ reviews definitions and properties of Hom-Lie algebras.  In section $ 2 $, we define $ \lambda $-
symplectic vector spaces.
In section $3$, we define  Hom-Lie Heisenberg Algebras and we give a classification of Hom-Lie Heisenberg Algebras. In section $ 4, $ we recall the definition of $ \alpha^r $-derivation of Hom-Lie algebras, we compute the set of $\alpha^r$-derivation of Hom-Lie Heisenberg Algebras, we define the Hom-Lie algebra $ \mathcal{H}(D) $ and the meta Heisenberg Hom algebra.
Sections  $ 5, $ $ 6 $ and $ 7 $ deal respectively the faithfull, trivial and adjoint representation.
\section{Preliminaries}
\subsection{Hom-Lie algebras}
\begin{defn}\cite{HLS,Ayedi,AbdenacerM}
A Hom-Lie algebra  is a triple $(\mathcal{G},\ [\cdot,\cdot],\ \alpha)$\ consisting of a $ \mathbb{K} $ vector space $\mathcal{G}$, a  bilinear map \ $\ [\cdot,\cdot]:\mathcal{G}\times \mathcal{G}\rightarrow \mathcal{G}$ \ and a $ \mathbb{K}$-linear map\ $ \alpha:\mathcal{G}\rightarrow \mathcal{G} \ $satisfying
\begin{eqnarray}
&&[x,y]=-[y,x],\\
&&\circlearrowleft_{x,y,z}\big[\alpha(x),[y, z]\big]= 0 \label{jacobie}
\end{eqnarray}
for all $x,\, y,\, z \in \mathcal{G}$
 	($\circlearrowleft_{x, y, z}$ denotes summation
 over the cyclic permutations on $x, y, z$).\\
Let $(\mathcal{G},[\cdot,\cdot],\alpha)$ be a Hom-Lie algebra. A Hom-Lie algebra is called
\begin{itemize}
\item multiplicative  if $\forall x,y\in \mathcal{G}$ we have $\alpha([x,y])=[\alpha(x),\alpha(y)];$
\item regular if $\alpha$ is an automorphism;
\item involutive  if $\alpha$ is an involution, that is $\alpha^{2}=id .$
\end{itemize}
Let $\left( \mathcal{G}, [\cdot, \cdot], ,\alpha \right) $ and $\left(
\mathcal{G}^{\prime }, [\cdot, \cdot] ^{\prime },\alpha^{\prime
}\right) $ be two Hom-Lie algebras. An homomorphism
 $f\ :\mathcal{G}\rightarrow \mathcal{G}^{\prime }$ is said
to be a
\emph{morphism of  Hom-Lie algebras} if
\begin{eqnarray} f ([x, y])&=&[f(x), f(y)] ^{\prime }\quad \forall x,y\in \mathcal{G}
\\ f\circ \alpha&=&\alpha^{\prime
}\circ f.
\end{eqnarray}
\end{defn}
\begin{defn}\cite{Ideal,Ideal1}
An ideal in a Hom-Lie algebra $\mathcal{G}$ is a vector subspace $I$ so that $[\mathcal{G},\alpha(I)] \subset I$. In
other words, $[a,\alpha(x)] \in I$ for all $a\in\mathcal{G}$, $\alpha(x)\in  I$.

A Hom-Lie algebra $\mathcal{G}$ is called abelian if the Lie bracket vanishes for all elements in $\mathcal{G}$:
$[x, y] = 0$
for all $x, y \in\mathcal{G}$.

A non abelian Hom-Lie algebra $\mathcal{G}$ is called simple if it has no non trivial ideals.

Given Hom-Lie algebras $\mathcal{G}$  and $\mathcal{H}$ , their direct sum $\mathcal{G}\oplus \mathcal{H}$   consists of the
vector space direct sum with a bracket operation restricting to the original brackets
on $\mathcal{G}$  and $\mathcal{H}$  and satisfying $[\mathcal{G} , \mathcal{H} ] = 0$.

We define a Hom-Lie algebra $\mathcal{G}$  to be semisimple if it is the finite direct sum of simple Lie
algebras $\mathcal{G}_i$:
$\mathcal{G} = \mathcal{G}_1\oplus\mathcal{G}_2\oplus\dots\oplus\mathcal{G}_n$

Define the lower central series of $\mathcal{G}$ as follows: let  $\mathcal{G}_0=\mathcal{G},\ \mathcal{G}_1=[\mathcal{G},\mathcal{G}],\dots ,\mathcal{G}_n=[\mathcal{G},\mathcal{G}_n]$
. We say $\mathcal{G}$ is nilpotent if $\mathcal{G}_n=\{0\}$ for some $n$.

\end{defn}
\begin{defn}\cite{shengR}
	For any $k\geq -1,$ we call $D\in End(\mathcal{G})$  a $\alpha^{k}$- derivation of the Hom-Lie algebra $(\mathcal{G},[\cdot,\cdot],\alpha)$ if
	\begin{align}
	\alpha \circ D&=D \circ \alpha \quad\text{and }\quad \label{juillet17}\\
	D([x,y])&=[D(x),  \alpha^{k}(y)]+[\alpha^{k}(x),D(y)]\quad   \forall x, y \in \mathcal{G}.\label{13juillet17}
	\end{align}
	We denote by $Der_{\alpha^{k}}(\mathcal{G})$ the set of $\alpha^{k}$-derivations of the Hom-Lie algebra $(\mathcal{G},[\cdot,\cdot],\alpha),$
	and  $$\displaystyle Der(\mathcal{G})=\bigoplus_{k\geq-1}Der_{\alpha^{k}}(\mathcal{G}).$$	
\end{defn}
\begin{defn} \cite{Ayedi}
	A    bilinear form $b$ is called invariant if \[b([x,y],z)=b(x,[y,z])\qquad \forall x,\ y,\ z\in \mathcal{G} .\]  
	
\end{defn}
\subsection{Cohomolgy of Hom-Lie algebras}
First we recall the definition of the representation of multiplicative Hom-Lie algebras.
\begin{defn}(see\cite{shengR})
Let $(\mathcal{G},[\cdot,\cdot],\alpha)$ be a multiplicative Hom-Lie algebra, $V$ be  an arbitrary vector space, and $\beta\in\mathcal{G}l(V)$ be an arbitrary linear self-map on $V$. If  a bilinear map 
\begin{eqnarray*}
[\cdot,\cdot ]_{V}&:&\mathcal{G}\times V \rightarrow V \\
&&(g,v)\mapsto [g,v]_{V}
\end{eqnarray*}
satisfies 
\begin{eqnarray}
[\alpha(x),\beta(v)]_{V}&=&\beta([x,v]_{V}) \label{rep1}\\
\left[[x,y],\beta(v)\right]_{V}&=&\left[\alpha(x),[y,v]\right]_{V}-\left[\alpha(y),[x,v]\right]_{V} .\label{rep2}
\label{mod}
\end{eqnarray}
for all  elements $ x,\, y\in  \mathcal{G}$ and $ v\in V. $\\
Then $(V,[\cdot,\cdot]_{V}, \beta)$  is called a representation of  $\mathcal{G}$, and $ (V,\beta) $ is called a Hom-$\mathcal{G}$-module  via the action $x.v=[x,v]_V$, for all $x\in \mathcal{G}$, $ v\in V. $ 
\end{defn}
\begin{defn}
A $k$-cochain on $\mathcal{G}  $ with values in $V$ is defined to be a $k$-Hom-cochain $f\in C^{k}(\mathcal{G},\ V)$ such it is compatible with $ \alpha $ and $ \beta $ in the sense that $ \beta\circ f =f \circ\alpha $.  Denote $C^{k}_{\alpha,\beta}(\mathcal{G},\ V)  $ the set of $k$-Hom-cochain.

Define $\delta^{k}:C^{k}(\mathcal{G},\ V)\rightarrow C^{k+1}(\mathcal{G},\ V)$ by setting
\begin{align}
&\delta^{k}_r(f)(x_{0},\dots,x_{k})=\nonumber \\&\sum_{0\leq s < t\leq k}(-1)^{t}
f\Big(\alpha(x_{0}),\dots,\alpha(x_{s-1}),[x_{s},x_{t}],\alpha(x_{s+1}),\dots,\widehat{x_{t}},\dots,\alpha(x_{k})\Big) \nonumber \\
&+\sum_{s=0}^{k}(-1)^{s}\Bigg[\alpha^{k+r-1}(x_{s}), f\Big(x_{0},\dots,\widehat{x_{s}},\dots,x_{k}\Big)\Bigg]_{V},\label{def cobbb}
\end{align}
 where $f\in C^{k}(\mathcal{G},\ V)$,  $\ x_{0},....,x_{k}\in \mathcal{G}$ and $\ \widehat{x_{i}}\ $  means that $x_{i}$ is omitted.
 
 For $ k=0 $, for any $a\in \mathcal{G}$, we  take 
 \begin{equation}\label{septembremardi}
 \delta^{0}_r(v)(x)=[\alpha^{r-1}(x),v]_V, \qquad \forall x\in \mathcal{G}, \ v\in V.
 \end{equation}

 \end{defn}

The pair $(\oplus_{k>0}C^{k}_{\alpha ,\beta}(\mathcal{G},\ V),\{\delta^{k}\}_{k>0})$ defines a chomology complex, that is
 $\delta^{k} \circ \delta^{k-1}=0.$
\begin{itemize}
\item The $k$-cocycles space is defined as $Z^{k}(\mathcal{G},V)=\ker \ \delta^{k}.$
 \item  The $k$-coboundary space is defined as    $B^{k}(\mathcal{G},V)=Im \ \delta^{k-1}.$
  \item The $k^{th}$ cohomology  space is the quotient  $H^{k}(\mathcal{G},V)=Z^{k}(\mathcal{G},V)/ B^{k}(\mathcal{G},V). $ It decomposes as well as even and odd $k^{th}$ cohomology spaces.
 \end{itemize}

\section{Symplectic vector spaces}
Let $V$ be a vector space over field $\mathbb{K}$, and let  
$ w:\,V\times V \to  \mathbb{K}$ be a bilinear, skew-symmetric  and non-degenerate form. A symplectic vector space $ (V,w) $ is a vector space with a  bilinear, skew-symmetric, non degenerate form.\\
Let $ (V,w)  $ be a symplectic vector space. Then there is a basis  $ (x_1,\cdots,x_m,y_1,\cdots,y_m) $ such that 
$ w(x_i,x_j) =w(y_i,y_j)=0$ and $ w(x_i,y_j)=\delta_{ij} \quad \forall i,j\in\{1,\cdots,m\}. $ The basis $ (x_1,\cdots,x_m,y_1,\cdots,y_m) $ is called   symplectic basis for $ V. $\\
The symplectic orthogonal complement of some subspace $U\subset V$ is defined as \[U^{\perp}=\{v\in V : w(u,v)=0,\, \forall u\in U\}.\]
We say that $U$  is isotropic if $ U\subset U^{\perp} $  and  Lagrangian if $U=U^{\perp}$.
\begin{prop}
	Let $ U $ be a isotropic subspace of $V$. There exist a isotropic subspace $ W $ such that $ U\cap W=\{ 0\}$,  $ \dim W=\dim U $   and $ U\oplus W $ is symplectic.
\end{prop}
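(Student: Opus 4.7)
The plan is to proceed by induction on $k=\dim U$. The base case $k=0$ is trivial, taking $W=\{0\}$.

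For the inductive step, pick a nonzero vector $u_1\in U$. Since $w$ is non-degenerate, there is some $v\in V$ with $w(u_1,v)\neq 0$, and after rescaling we may assume $w(u_1,v_1)=1$. Because $U$ is isotropic, $w(u_1,u)=0$ for every $u\in U$, so necessarily $v_1\notin U$. The plane $W_0=\langle u_1,v_1\rangle$ is then a symplectic subspace (skew-symmetry gives $w(u_1,u_1)=w(v_1,v_1)=0$, and the off-diagonal pairing is $1$), so we obtain the orthogonal decomposition $V=W_0\oplus W_0^{\perp}$ with $w|_{W_0^{\perp}}$ non-degenerate.

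Next I would set $U'=U\cap W_0^{\perp}$. The linear functional $u\mapsto w(u,v_1)$ on $U$ is nonzero (it sends $u_1$ to $1$) and its kernel is exactly $U'$, since $w(u,u_1)=0$ automatically on $U$. Hence $\dim U'=k-1$, and clearly $U'$ is isotropic inside the symplectic space $W_0^{\perp}$. The induction hypothesis then furnishes an isotropic $W'\subset W_0^{\perp}$ with $\dim W'=k-1$, $U'\cap W'=\{0\}$, and $U'\oplus W'$ symplectic.

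Finally I would define $W=W'\oplus\langle v_1\rangle$ and check the four required properties. Isotropy follows because $W'$ is isotropic, $w(v_1,v_1)=0$, and $W'\subset W_0^{\perp}$ is orthogonal to $v_1\in W_0$. Disjointness $U\cap W=\{0\}$ and the dimension count $\dim W=k$ are verified by projecting along $V=W_0\oplus W_0^{\perp}$: writing $U=U'\oplus\langle u_1\rangle$, any element of $U\cap W$ splits into a $W_0$-component forcing $cu_1=dv_1=0$ and a $W_0^{\perp}$-component lying in $U'\cap W'=\{0\}$. For the symplectic property, note $U\oplus W=W_0\oplus(U'\oplus W')$ is an orthogonal direct sum for $w$, and both summands carry non-degenerate restrictions (by construction of $W_0$ and by the induction hypothesis respectively). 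The main obstacle is really just bookkeeping: making sure the decomposition $V=W_0\oplus W_0^{\perp}$ is compatible with $U$ so that the inductive hypothesis applies cleanly, and then verifying non-degeneracy of the restriction to $U\oplus W$ via the orthogonality of the two summands.
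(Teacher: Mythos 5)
Your induction is correct and complete: the reduction to $W_0^{\perp}$ via the symplectic plane $W_0=\langle u_1,v_1\rangle$, the identification of $U'=U\cap W_0^{\perp}$ as the kernel of $u\mapsto w(u,v_1)$ (using isotropy of $U$ to kill the other condition), and the final orthogonal decomposition $U\oplus W=W_0\perp(U'\oplus W')$ all check out. The paper states this proposition without any proof, so there is nothing to compare against; your argument is the standard one and fills that gap correctly (the only implicit assumption is that the base field does not have characteristic $2$, which is harmless since the paper works over $\C$).
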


\begin{defn}
A linear map $s: V\to V$ is called a $ \lambda $-symplectic transformation if 
\[w\left( s(u),s(v)\right) =\lambda w\left( u,v\right) ,\qquad \forall\,  u,\, v\in V.\]
A $(2m) \times (2m)$ matrix $S$ with entries in the field $\mathbb{K} $  is said to be $\lambda$-symplectic if
\[\fourIdx{t}{}{}{}{S}\,B\,S=\lambda\, B\] where
\[B=\begin{bmatrix}
0&I_{m}\\
-I_{m}&0\\
\end{bmatrix}.\]
Let  $S=\begin{bmatrix}
X_{m}&Z_{m}\\
T_{m}&Y_{m}\\
\end{bmatrix}$ . Using the identities $ \fourIdx{t}{}{}{}{S}\,B\,S=\lambda\, B, $ by an explicit calculation, we obtain the matrix $S$ is $ \lambda $-symplectic  if and only the three following 
conditions are satisfied:
\begin{equation}
\fourIdx{t}{}{}{}{X_{m}}\,T_{m}\,=\fourIdx{t}{}{}{}{T_{m}}\,X_{m},\ \fourIdx{t}{}{}{}{Z_{m}}\,Y_{m}\,=\fourIdx{t}{}{}{}{Y_{m}}\,Z\, \text{ and  } \fourIdx{t}{}{}{}{X_{m}}\,Y_{m}\,-\fourIdx{t}{}{}{}{T_{m}}\,Z_{m}=\lambda I_{m}.\\
\end{equation}
Let $ Z(s)=\{m\in End(V)/\, m\circ s=s\circ m\}. $ For any $ m\in   Z(s)$ and   $ n\in   Z(s)$, define their commutator $ [m,n] $ as usual : $ [m,n]=m\circ n- n\circ m. $ Then $ (Z(s),[\cdot,\cdot],s) $ is a Hom-Lie algebra. The $ k $-symplectic Hom-Lie algebra $ \mathfrak{sp}_{k}(V,s) $ correspondingly consists of endomorphisms $ f:V\to V $ satisfying \[   w(f(x),s^k(y))+w(s^k(x),f(y))=0 \] for all $ x\in V $ and $ y\in V. $
\end{defn}
\begin{defn}
	Let $(\mathcal{G},[.,.],\alpha)$ be a     Hom-Lie algebra.
	We say that  $ (\mathcal{G},w) $ is a symplectic Hom-Lie algebra  if $ w $ is a $ 2 $-cocycle for the scalar cohomology of $ \mathcal{G}. $  
\end{defn}

\section{Heisenberg Hom-Lie algebras}

\begin{defn} 
 Let $(\mathcal{H},[.,.],\alpha)$ be a     Hom-Lie algebra.
$ \mathcal{H} $ is a Heisenberg type Hom-Lie algebra if and only if $ \mathcal{H} $ is nilpotent and $ dim\left([\mathcal{H},\mathcal{H}]\right)=1. $
\end{defn}

 Let $(\mathcal{H},[.,.],\alpha)$ be a finite-dimensional    Hom-Lie algebra.
Suppose that $  \mathcal{H} $ is a $1$-dimensional  derived ideal \[ D(\mathcal{H})=D_1(H)=[\mathcal{H},\mathcal{H}]=<z>. \]
Then a skew symmetric bilinear form $\mathcal{B}$ can be defined on  $  \mathcal{H} $ by \[ [x,y]=   \mathcal{B}(x,y)z \]
if $ z\notin Z(\mathcal{H}) $, we can deduce 
\[ D_{k+1}(\mathcal{H})=[D_k(\mathcal{H}),\mathcal{H}]=<z> \]

We get the following results.
\begin{prop}
A Hom-Lie algebra $ \mathcal{H} $ is a Heisenberg Hom-Lie algebra only if only it has a $1$-dimensional   derived ideal
 such that \[[\mathcal{H},\mathcal{H}]\subset Z(\mathcal{H})\]
\end{prop}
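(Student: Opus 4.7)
The plan is to prove both implications of the equivalence. I will write $[\mathcal{H},\mathcal{H}]=\langle z\rangle$ whenever we are in the one-dimensional derived ideal setting, so that there is a skew-symmetric bilinear form $\mathcal{B}$ on $\mathcal{H}$ with $[x,y]=\mathcal{B}(x,y)\,z$, as noted just before the statement.

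For the forward direction, suppose $\mathcal{H}$ is Heisenberg, so $\mathcal{H}$ is nilpotent and $[\mathcal{H},\mathcal{H}]=\langle z\rangle$. The goal is to show $z\in Z(\mathcal{H})$, which gives $[\mathcal{H},\mathcal{H}]\subset Z(\mathcal{H})$. I argue by contradiction: assume $z\notin Z(\mathcal{H})$, so there exists $x_{0}\in\mathcal{H}$ with $[x_{0},z]\neq 0$. Since $[x_{0},z]\in[\mathcal{H},\mathcal{H}]=\langle z\rangle$, we can write $[x_{0},z]=\mu z$ with $\mu\neq 0$. I then show by induction on $k\geq 1$ that $D_{k}(\mathcal{H})=\langle z\rangle$. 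The base case $k=1$ is by hypothesis, and for the inductive step, observe that $D_{k+1}(\mathcal{H})=[\mathcal{H},D_{k}(\mathcal{H})]=[\mathcal{H},\langle z\rangle]$, which is contained in $\langle z\rangle$ (as all brackets lie in $[\mathcal{H},\mathcal{H}]=\langle z\rangle$) and contains $[x_{0},z]=\mu z$, hence equals $\langle z\rangle$. This exactly reproduces the computation noted in the paragraph preceding the statement. The lower central series therefore never terminates, contradicting nilpotency of $\mathcal{H}$.

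For the converse, suppose $\dim[\mathcal{H},\mathcal{H}]=1$ and $[\mathcal{H},\mathcal{H}]\subset Z(\mathcal{H})$. Then
\[
D_{2}(\mathcal{H})=[\mathcal{H},D_{1}(\mathcal{H})]=[\mathcal{H},[\mathcal{H},\mathcal{H}]]\subset[\mathcal{H},Z(\mathcal{H})]=\{0\},
\]
so the lower central series vanishes at step $2$ and $\mathcal{H}$ is nilpotent. Combined with $\dim[\mathcal{H},\mathcal{H}]=1$, this is exactly the definition of a Heisenberg Hom-Lie algebra.

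The whole argument is essentially a translation of the classical Lie-algebraic proof, and in fact uses nothing specific to the twisting map $\alpha$ beyond the bracket structure; the anticommutativity axiom and the definition of the lower central series suffice, while the Hom-Jacobi identity \eqref{jacobie} is not needed. The only mildly non-routine point is the inductive argument in the forward direction, but since $[\mathcal{H},\mathcal{H}]$ is one-dimensional and already contains $[x_{0},z]=\mu z$ with $\mu\neq 0$, the induction collapses immediately and there is no real obstacle.
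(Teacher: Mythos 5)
Your proof is correct and follows essentially the same route the paper takes: the paper's (un-displayed) argument is precisely the observation, stated in the paragraph preceding the proposition, that if $z\notin Z(\mathcal{H})$ then $D_{k+1}(\mathcal{H})=[D_k(\mathcal{H}),\mathcal{H}]=\langle z\rangle$ for all $k$, contradicting nilpotency, and your induction plus the easy converse ($D_2(\mathcal{H})\subset[\mathcal{H},Z(\mathcal{H})]=\{0\}$) just makes this explicit. Your remark that the Hom-Jacobi identity and the twist $\alpha$ play no role is also consistent with the paper's treatment.
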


For the rest of this article, we mean by an Heisenberg  Hom-Lie algebra a multiplicative  Heisenberg  Hom-Lie algebra such that $ [\mathcal{H},\mathcal{H}]= Z(\mathcal{H}) $.
\begin{theorem}
A Heisenberg  Hom-Lie algebra $\mathcal{H}^m$ is a Hom-Lie algebra with $2m+1$  generators  $x_1\dots,x_{m},y_1,\dots,y_m,z  $  with the following structure
\begin{enumerate}[(i)]
  \item The products of the basis elements are given by
\begin{align*}
[x_{k},y_{k}]&=z \qquad k=1,2,\dots,m
\end{align*}
(all other brackets are zero).
\item The matrix of $ \alpha $ with respect to the bases $(x_1\dots,x_{m},y_1,\dots,y_m,z ) $ 
 is of the form
\begin{align}
P=\begin{bmatrix}
X_{mm}&T_{mm}&0_{m1}\\
Z_{mm}&Y_{mm}&0_{m1}\\
L_{1m}&M_{1m}&\lambda
\end{bmatrix}
\end{align}
 where 
$\begin{bmatrix}
X_{mm}&T_{mm}\\
Z_{mm}&Y_{mm}
\end{bmatrix}$
is $\lambda$-symplectic.
\end{enumerate}
\end{theorem}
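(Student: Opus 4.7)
The plan is to separate the two claims. For part (i), I would first invoke the setup: since $\mathcal{H}$ is Heisenberg by the running convention, $[\mathcal{H},\mathcal{H}] = Z(\mathcal{H}) = \langle z\rangle$ is one dimensional, and the formula $[x,y] = \mathcal{B}(x,y)z$ defines a skew-symmetric bilinear form $\mathcal{B}$ on $\mathcal{H}$ whose radical contains $z$. The crucial observation is that $\mathcal{B}$ descends to a \emph{non-degenerate} form on the quotient $\bar{\mathcal{H}} := \mathcal{H}/\langle z\rangle$: if $\mathcal{B}(\bar x, \bar y) = 0$ for every $\bar y$, then $[x,y] = 0$ for every $y$, so $x \in Z(\mathcal{H}) = \langle z\rangle$, i.e.\ $\bar x = 0$. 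A non-degenerate skew form exists only in even dimension, hence $\dim \bar{\mathcal{H}} = 2m$ and $\dim \mathcal{H} = 2m+1$. Applying the symplectic basis proposition stated earlier to $(\bar{\mathcal{H}}, \mathcal{B})$ and lifting, I obtain vectors $x_1,\dots,x_m, y_1,\dots,y_m \in \mathcal{H}$ such that $[x_k,y_k] = z$ and all other basic brackets vanish.

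For part (ii), I would use multiplicativity of $\alpha$. Since $\alpha(z) = \alpha([x_1,y_1]) = [\alpha(x_1),\alpha(y_1)] \in [\mathcal{H},\mathcal{H}] = \langle z\rangle$, there is a scalar $\lambda \in \mathbb{K}$ with $\alpha(z) = \lambda z$, which accounts for the last column and the diagonal entry $\lambda$ of $P$. Expanding $\alpha(x_j)$ and $\alpha(y_j)$ in the chosen basis, the coefficients along the $x_i, y_i$ directions assemble into a $2m \times 2m$ block $S = \begin{bmatrix} X_{mm} & T_{mm} \\ Z_{mm} & Y_{mm}\end{bmatrix}$, while the $z$-coefficients yield the rows $L_{1m}$ and $M_{1m}$; this produces the stated block form of $P$.

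The substantive step is proving that $S$ is $\lambda$-symplectic. Evaluating multiplicativity on each pair $u,v \in \{x_i, y_j\}$ gives $\mathcal{B}(\overline{\alpha(u)}, \overline{\alpha(v)})\, z = [\alpha(u),\alpha(v)] = \alpha([u,v]) = \lambda\, \mathcal{B}(u,v)\, z$, where centrality of $z$ ensures that only the $\bar{\mathcal{H}}$-components of $\alpha(u), \alpha(v)$ contribute to the left-hand bracket. The induced map $\bar\alpha$ on $\bar{\mathcal{H}}$ therefore satisfies $\mathcal{B}(\bar\alpha(\bar u), \bar\alpha(\bar v)) = \lambda\, \mathcal{B}(\bar u, \bar v)$. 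In the symplectic basis this is the matrix identity $\fourIdx{t}{}{}{}{S}\, B_0\, S = \lambda B_0$ with $B_0 = \begin{bmatrix} 0 & I_m \\ -I_m & 0\end{bmatrix}$, which is the defining relation of a $\lambda$-symplectic matrix.

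The main obstacle is conceptual rather than computational: one must be careful that both $\mathcal{B}$ and $\alpha$ descend unambiguously to $\bar{\mathcal{H}}$, and that the $z$-components of $\alpha(x_j), \alpha(y_j)$ (namely the entries of $L_{1m}$ and $M_{1m}$) play no role in the symplectic identity precisely because $z$ is central. This is consistent with the theorem imposing no constraint on $L$ and $M$. Everything else is automatic: the Hom-Jacobi identity (\ref{jacobie}) holds trivially here since $[x,y] \in \langle z\rangle \subseteq Z(\mathcal{H})$, so all the iterated brackets vanish.
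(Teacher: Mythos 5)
Your argument is correct. The paper states this theorem without any proof, so there is nothing to compare against line by line; but your route --- observing that $\mathcal{B}$ descends to a non-degenerate skew-symmetric form on $\mathcal{H}/\langle z\rangle$ (forcing even dimension $2m$), choosing a symplectic basis via the proposition in the symplectic preliminaries, deducing $\alpha(z)=\lambda z$ from multiplicativity, and converting $[\alpha(u),\alpha(v)]=\lambda\mathcal{B}(u,v)z$ into the matrix identity ${}^{t}S\,B\,S=\lambda B$ while noting that the $z$-components $L$, $M$ drop out by centrality --- is precisely the argument the paper's setup (symplectic bases, $\lambda$-symplectic matrices) is designed to support, and it is complete.
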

\begin{cor}
Any multiplicative  Heisenberg Hom-Lie algebra is regular.
\end{cor}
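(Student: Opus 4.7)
The plan is to reduce regularity of $\alpha$ (the claim that $\alpha$ is bijective) to a single determinant computation on the $(2m+1)\times(2m+1)$ matrix $P$ supplied by part (ii) of the theorem. Since $\mathcal H^m$ is finite-dimensional, $\alpha$ is an automorphism if and only if $\det P\neq 0$.

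First, I would use that the third block-column of $P$ is $(0,0,\lambda)^{t}$, so $P$ is block lower-triangular and the block-triangular determinant formula gives
\[\det P\;=\;\lambda\cdot\det S,\qquad S=\begin{bmatrix}X_{mm}&T_{mm}\\Z_{mm}&Y_{mm}\end{bmatrix}.\]
Next I would exploit the hypothesis that $S$ is $\lambda$-symplectic: taking determinants in the identity ${}^tS\,B\,S=\lambda B$ (with $B$ the standard symplectic matrix, for which a short direct check gives $\det B=(-1)^{m(m+1)}=1$) yields $(\det S)^{2}=\lambda^{2m}$, hence $\det S=\pm\lambda^{m}$. Combining these two steps gives $\det P=\pm\lambda^{m+1}$.

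The remaining, and essentially only, obstacle is to verify $\lambda\neq 0$. I would argue this from the $\lambda$-symplectic condition itself: if $\lambda=0$ then $w(Su,Sv)=0$ for every $u,v\in V$, so $\mathrm{Im}(S)$ would be an isotropic subspace of the $2m$-dimensional symplectic space $V$ and thus would have dimension at most $m$. Combined with $\alpha(z)=\lambda z=0$, this would force $\mathrm{rk}(P)\leq m+1<2m+1$, incompatible with the construction of the theorem, where the $x_{i},y_{i}$ realize a symplectic basis on $\mathcal H^m/Z(\mathcal H^m)$ and multiplicativity, applied to $[x_k,y_k]=z$, determines $\lambda$ as a nonzero conformal factor on the one-dimensional center $\langle z\rangle=Z(\mathcal H^m)$. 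Once $\lambda\in\mathbb{K}^{\times}$ is secured, the computation $\det P=\pm\lambda^{m+1}\neq 0$ shows that $\alpha$ is an automorphism, so $\mathcal H^m$ is regular.
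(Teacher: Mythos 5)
Your overall strategy --- reduce regularity to $\det P\neq 0$, use the block-triangular shape of $P$ to get $\det P=\lambda\cdot\det S$, and then take determinants in ${}^{t}S\,B\,S=\lambda B$ (with $\det B=1$) to obtain $(\det S)^{2}=\lambda^{2m}$ --- is sound, and since the paper states this corollary with no proof at all, this determinant computation is presumably the intended argument. Those two steps are correct as written.

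The gap is exactly where you yourself locate the ``only obstacle'': the claim $\lambda\neq 0$. Your justification --- that multiplicativity applied to $[x_k,y_k]=z$ ``determines $\lambda$ as a nonzero conformal factor'' --- is circular. Multiplicativity gives $\lambda z=\alpha([x_k,y_k])=[\alpha(x_k),\alpha(y_k)]=\mathcal{B}(\alpha(x_k),\alpha(y_k))\,z=\lambda\,\mathcal{B}(x_k,y_k)\,z=\lambda z$, a tautology that is perfectly satisfied by $\lambda=0$. Likewise your rank estimate only shows that $\lambda=0$ would make $\alpha$ non-invertible; it does not contradict any hypothesis, because nothing in the paper's definition of a multiplicative Heisenberg Hom-Lie algebra excludes a degenerate $\alpha$. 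Indeed $\alpha=0$ is multiplicative, satisfies the Hom-Jacobi identity trivially (every bracket $[y,z]$ lies in the center, so $[\alpha(x),[y,z]]=0$ for \emph{any} $\alpha$), and does not change $[\mathcal{H},\mathcal{H}]=Z(\mathcal{H})=\langle z\rangle$, yet it is certainly not an automorphism. So $\lambda\neq 0$ cannot be \emph{derived} from multiplicativity; it is an extra hypothesis, one the paper silently adopts later by writing $\mathcal{H}^m_\lambda$ with $\lambda\in\C^*$. Your proof is complete modulo that assumption, but the assumption must be stated rather than ``secured'' by the argument you give; as the corollary is literally stated, it admits the counterexample $\alpha=0$.
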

\begin{cor}
	Let $ E=<x_1,\cdots,x_m> $,   $ E^*=<y_1,\dots,y_m>$ and $ V=E\oplus E^* $. Then, 
	$ (V,\mathcal{B})$ is a symplectic vector space,  $ E $ is a lagrangian ideal on $V$ and $ (\mathcal{H}^m,\mathcal{B}) $ is a  symplectic Hom-Lie algebra.
\end{cor}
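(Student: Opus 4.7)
The plan is to verify the three assertions of the corollary in sequence, each reducing to a direct calculation from Theorem~3.2 and the defining relation $[x,y] = \mathcal{B}(x,y)\,z$.

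First, for the claim that $(V, \mathcal{B})$ is a symplectic vector space, I would read off from Theorem~3.2(i) that the only nonzero bracket among the basis vectors of $V$ is $[x_k, y_k] = z$, so that $\mathcal{B}(x_i, y_j) = \delta_{ij}$ and $\mathcal{B}(x_i, x_j) = \mathcal{B}(y_i, y_j) = 0$. Consequently, in the ordered basis $(x_1, \ldots, x_m, y_1, \ldots, y_m)$ the matrix of $\mathcal{B}|_V$ is the canonical symplectic matrix $\begin{pmatrix} 0 & I_m \\ -I_m & 0 \end{pmatrix}$. Together with the skew-symmetry of the bracket (which transfers to $\mathcal{B}$) and the fact that this matrix is invertible, this gives skew-symmetry, bilinearity and non-degeneracy of $\mathcal{B}$ on $V$.

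The Lagrangian assertion is then immediate from what is already at hand: $\mathcal{B}(x_i, x_j) = 0$ gives $E \subseteq E^\perp$, and non-degeneracy forces $\dim E^\perp = 2m - m = m = \dim E$, so $E = E^\perp$. The ``ideal'' part I would interpret as saying $E$ is an abelian subalgebra of $\mathcal{H}^m$, which is immediate from $[E,E] = 0$.

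For the third claim — that $(\mathcal{H}^m, \mathcal{B})$ is a symplectic Hom-Lie algebra — I would verify the $2$-cocycle condition for the scalar cohomology. Cochain compatibility comes first: from the matrix $P$ in Theorem~3.2(ii) one reads $\alpha(z) = \lambda z$, and multiplicativity $\alpha([x,y]) = [\alpha(x), \alpha(y)]$ then gives $\mathcal{B}(\alpha(x), \alpha(y)) = \lambda\,\mathcal{B}(x, y)$, so $\mathcal{B} \in C^{2}_{\alpha,\beta}(\mathcal{H}^m, \mathbb{K})$ with the scalar map $\beta = \lambda\cdot\mathrm{id}_{\mathbb{K}}$. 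Specializing formula~(\ref{def cobbb}) to $k=2$ with trivial action on $\mathbb{K}$, the second sum vanishes identically and there remains
\[ \delta^{2}\mathcal{B}(x_0,x_1,x_2) = -\mathcal{B}([x_0,x_1],\alpha(x_2)) + \mathcal{B}([x_0,x_2],\alpha(x_1)) + \mathcal{B}(\alpha(x_0),[x_1,x_2]). \]
Each bracket is a scalar multiple of $z$, and the centrality of $z$ in $\mathcal{H}^m$ forces $\mathcal{B}(z,\cdot) \equiv 0$, so each of the three terms vanishes individually and $\delta^{2}\mathcal{B} = 0$.

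The most delicate point of the whole argument is the cochain-compatibility step: $\mathcal{B}$ is not preserved by $\alpha \otimes \alpha$ unless $\lambda = 1$, so one must first identify the correct scalar self-map $\beta = \lambda\,\mathrm{id}_{\mathbb{K}}$ on the coefficient module before the cocycle formula is even meaningful. Once this is in place, centrality of $z$ does all the remaining work.
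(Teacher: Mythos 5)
The paper states this corollary with no proof at all, so there is nothing to compare against; your argument supplies the missing verification and it is correct. All three parts check out: the matrix of $\mathcal{B}$ restricted to $V$ in the basis $(x_1,\dots,x_m,y_1,\dots,y_m)$ is indeed the canonical invertible skew-symmetric block matrix; the Lagrangian computation $\dim E^\perp = 2m-\dim E=m$ is right; and for the cocycle condition your specialization of formula \eqref{def cobbb} to $k=2$ with trivial coefficients is the correct one, with all three surviving terms killed by $\mathcal{B}(z,\cdot)\equiv 0$, which follows from centrality of $z$ together with $[z,u]=\mathcal{B}(z,u)z$. You are also right to flag the compatibility step $\mathcal{B}(\alpha(x),\alpha(y))=\lambda\,\mathcal{B}(x,y)$ as the point that needs multiplicativity and $\alpha(z)=\lambda z$; the paper never makes this explicit. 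The one place where you silently repair the statement rather than prove it is the word ``ideal'': under the paper's own definition ($[\mathcal{G},\alpha(I)]\subset I$), $E$ is not literally an ideal of anything here, since brackets of elements of $V$ land in $\mathbb{C}z$, which is not contained in $V$, and $\alpha(E)$ need not even lie in $E$ for the general matrix $P$ of Theorem~3.2 (the block $Z_{mm}$ can be nonzero). Your reading of the claim as ``$E$ is a Lagrangian (hence abelian) subspace'' is the only sensible one, but it would be worth saying explicitly that this is an interpretation, not a consequence of the paper's definition of ideal.
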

\begin{prop}
	Let $ (V,w) $ be a symplectic vector space and $ \gamma $ a $ \lambda $-symplectic endomorphism  of  $ V. $ Let $ \alpha $ be an endomorphism of $ V\oplus \C $ defined by $\alpha(v+z)=\gamma(v)+\lambda z\quad \forall (v,z)\in V\times \C $. Then, $ (V\oplus\C,w,\alpha) $ is a Heisenberg Hom-Lie algebra. 
\end{prop}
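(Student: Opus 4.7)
The plan is to equip $V\oplus\mathbb{C}$ with the bracket
\[
[v_1+z_1,\, v_2+z_2] \;=\; w(v_1,v_2)\, e,
\]
where $e$ generates the $\mathbb{C}$-summand, and then check the four things required by the paper's definition of a (multiplicative) Heisenberg Hom-Lie algebra: (a) skew-symmetry and Hom-Jacobi for $\alpha$, (b) multiplicativity of $\alpha$, (c) nilpotency, and (d) $[\mathcal{H},\mathcal{H}] = Z(\mathcal{H})$ of dimension one.

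First I would observe that skew-symmetry of the bracket is immediate from skew-symmetry of $w$. The Hom-Jacobi identity \eqref{jacobie} is essentially free: every bracket lands in $\mathbb{C}$, and every bracket with an element of $\mathbb{C}$ vanishes (because $w$ only reads off the $V$-components), so each summand $[\alpha(x),[y,z]]$ is already zero, and the cyclic sum is trivially zero.

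The key computation is multiplicativity, and this is the only place the $\lambda$-symplectic hypothesis on $\gamma$ is actually used. On the one hand,
\[
\alpha\bigl([v_1+z_1,v_2+z_2]\bigr)=\alpha\bigl(w(v_1,v_2)e\bigr)=\lambda\, w(v_1,v_2)\, e,
\]
while on the other,
\[
[\alpha(v_1+z_1),\alpha(v_2+z_2)] = [\gamma(v_1)+\lambda z_1,\gamma(v_2)+\lambda z_2] = w(\gamma(v_1),\gamma(v_2))\, e.
\]
The two sides agree precisely because $w(\gamma(v_1),\gamma(v_2))=\lambda w(v_1,v_2)$, which is the definition of $\gamma$ being $\lambda$-symplectic. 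This step is really the heart of the proposition, though it is a one-line check once everything is set up correctly; the subtlety is that the definition of $\alpha$ on the central $\mathbb{C}$-summand must be multiplication by exactly the same scalar $\lambda$ that appears in the $\lambda$-symplectic condition.

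Finally, for the Heisenberg conditions: since $w$ is non-degenerate, there exist $v_1,v_2\in V$ with $w(v_1,v_2)\neq 0$, so $[\mathcal{H},\mathcal{H}]=\mathbb{C}e$ is one-dimensional. Because the bracket of any element with an element of $\mathbb{C}e$ is zero, we have $\mathbb{C}e\subseteq Z(\mathcal{H})$, and conversely if $v+z$ is central then $w(v,v')=0$ for all $v'\in V$, forcing $v=0$ by non-degeneracy, so $Z(\mathcal{H})=\mathbb{C}e=[\mathcal{H},\mathcal{H}]$. Nilpotency then follows at once since $[\mathcal{H},[\mathcal{H},\mathcal{H}]]=0$. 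I do not foresee a real obstacle here: the only nontrivial ingredient is the matching of the scalar $\lambda$ on both sides of the multiplicativity equation, and that is exactly what the hypothesis on $\gamma$ delivers.
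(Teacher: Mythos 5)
Your proposal is correct and complete: the paper states this proposition without supplying a proof, and your direct verification (skew-symmetry and the trivially satisfied Hom-Jacobi identity since all brackets are central, multiplicativity via $w(\gamma(v_1),\gamma(v_2))=\lambda w(v_1,v_2)$ matched against $\alpha(e)=\lambda e$, and the identification $[\mathcal{H},\mathcal{H}]=Z(\mathcal{H})=\mathbb{C}e$ from non-degeneracy of $w$) is exactly the intended argument, checking all the conditions in the paper's working definition of a Heisenberg Hom-Lie algebra. You correctly isolate the one non-trivial point, namely that the scalar by which $\alpha$ acts on the center must coincide with the $\lambda$ of the $\lambda$-symplectic condition.
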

\begin{example}
Let $\mathcal{H}^m$ be a vector space with basis $(x_1\dots,x_{2m},z ) $. The following triple $(\mathcal{H},\ [.,.],\ \alpha)$ define a Heisenberg  Hom-Lie algebra 
\begin{align*}
[x_{2k-1},x_{2k}]&=z \qquad k=1,2,\dots,m
\end{align*}
(all other brackets are zero).
\[P=matrix\big(\alpha,(x_1\dots,x_{2m},z )\big)=\begin{bmatrix}
P_1&0&\hdotsfor[0.5]{2}&0\\
0&P_2&0&\dots&0\\
\vdots&\ddots&\ddots & \ddots & \vdots \\
.&....&.0&P_m&.0\\
0&\hdotsfor[0.5]{2} &0&\lambda
\end{bmatrix}\]
where
$ P_i\in M_2(\C)$ and $ \ det(P_i)=\lambda.$
\end{example}
\subsection{Heisenberg Hom-Lie algebras of dimension Three}
\begin{prop}
	Let $ \lambda\in\C\setminus\{0,1\} $ and let 
  $(\mathcal{H}^1_{\lambda},\ [.,.],\ \alpha)$  be an Heisenberg  Hom-Lie algebra  over a field $ \C $ of dimension Three. Then, there exist a basis 
  $ (X,Y,Z) $ such that $ [X,Y] =Z,\, [X,Z]=[Y,Z]=0 $ and the matrix of $ \alpha $ has the form $\begin{bmatrix}
  a&c&0\\
  b&d&0\\
  0&0&\lambda
  \end{bmatrix}  $
where $ ad-bc=\lambda. $
\end{prop}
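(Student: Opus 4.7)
My plan is to start from the general form given by Theorem~3.2 specialised to $m=1$: some basis $(x_1,y_1,z)$ satisfies $[x_1,y_1]=z$, $[x_1,z]=[y_1,z]=0$, with matrix of $\alpha$
\[P_0=\begin{bmatrix} a & c & 0\\ b & d & 0\\ \ell & m & \lambda \end{bmatrix},\qquad ad-bc=\lambda.\]
The content of the proposition is then the elimination of the entries $\ell,m$ by a change of basis that still gives a Heisenberg triple.

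I would try the simplest possible ansatz, $X=x_1+pz$, $Y=y_1+qz$, $Z=z$, with $p,q\in\C$ to be determined. Because $z$ is central, the bracket relations $[X,Y]=Z$ and $[X,Z]=[Y,Z]=0$ hold automatically, so no extra work is needed on the Lie side. Expanding $\alpha(X)$ and $\alpha(Y)$ in the new basis gives their $Z$-coefficients as $\ell-(a-\lambda)p-bq$ and $m-cp-(d-\lambda)q$ respectively, while $\alpha(Z)=\lambda Z$ trivially. Requiring the two $Z$-coefficients to vanish reduces the proposition to the $2\times 2$ linear system
\[\begin{bmatrix} a-\lambda & b\\ c & d-\lambda \end{bmatrix}\begin{bmatrix} p\\ q \end{bmatrix}=\begin{bmatrix} \ell\\ m \end{bmatrix}.\]
Once this is solved, the matrix of $\alpha$ in $(X,Y,Z)$ is visibly block-diagonal of the required form with the same $(a,b,c,d)$, and the determinant condition $ad-bc=\lambda$ is inherited directly from the $\lambda$-symplecticity of the top block in Theorem~3.2.

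So the whole proof reduces to solvability of this system. Its determinant is $(a-\lambda)(d-\lambda)-bc=\lambda\bigl(1+\lambda-(a+d)\bigr)$, which is nonzero exactly when $\lambda$ is not an eigenvalue of $A=\left[\begin{smallmatrix}a&c\\b&d\end{smallmatrix}\right]$. Since $\det A=\lambda$, an eigenvalue $\lambda$ of $A$ would force the other eigenvalue of $A$ to equal $1$; so the hypothesis $\lambda\neq 0,1$ already rules out $\lambda$ being a double eigenvalue (which would require $\lambda^2=\lambda$). In the generic case $p$ and $q$ are determined uniquely by Cramer's rule, completing the argument.

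The main obstacle is the remaining degenerate case, in which $A$ does have the two distinct simple eigenvalues $\{\lambda,1\}$: then the coefficient matrix above is singular and the simple ansatz becomes inconsistent. To handle this case one has to enlarge the ansatz to allow $X$ and $Y$ to mix $x_1$ and $y_1$ as well, which rephrases the problem as finding a $2$-dimensional $\alpha$-invariant subspace of $\mathcal{H}^1_\lambda$ that is complementary to $\langle z\rangle$. I expect the hypothesis $\lambda\neq 1$ to be used essentially at this step, to separate the eigenvalues $\lambda$ and $1$ of $\alpha$ and to carry out a Jordan decomposition of $\alpha$ on the generalised $\lambda$-eigenspace; this is the technically delicate part of the proof.
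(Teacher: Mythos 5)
Your reduction and your treatment of the generic case are correct: the substitution $X=x_1+pz$, $Y=y_1+qz$, $Z=z$ leaves the bracket relations untouched, and the linear system you derive has determinant $\det(A-\lambda I)=\lambda\bigl(1+\lambda-(a+d)\bigr)$, which vanishes exactly when the induced map $A$ on $\mathcal{H}^1_\lambda/\langle z\rangle$ has eigenvalues $\{1,\lambda\}$. The genuine gap is the degenerate case you postpone, and it is not a technicality that a Jordan decomposition will repair: the claimed normal form actually fails there. Consider $[x_1,y_1]=z$ with $\alpha(x_1)=x_1$, $\alpha(y_1)=\lambda y_1+z$, $\alpha(z)=\lambda z$; this is multiplicative (the only nontrivial check is $\alpha(z)=[\alpha(x_1),\alpha(y_1)]=\lambda z$) and satisfies every hypothesis of the proposition. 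Since $Z$ must span the derived algebra $\langle z\rangle$, the block-diagonal form requires a two-dimensional $\alpha$-invariant subspace complementary to $\langle z\rangle$. Here the generalised eigenspaces are $W(1)=\langle x_1\rangle$ and $W(\lambda)=\langle y_1,z\rangle$, and $\alpha$ restricted to $W(\lambda)$ is a single Jordan block whose unique eigenline is $\langle z\rangle$; hence the only two-dimensional invariant subspaces are $\langle x_1,z\rangle$ and $\langle y_1,z\rangle$, both of which contain $z$. No admissible basis exists, so the $(3,2)$ entry cannot be removed, and the invariant-complement strategy you sketch for this case cannot succeed.

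The paper gives no proof of this proposition, so there is no argument to compare yours with; but note that the paper's own later statements side with the obstruction rather than with the proposition: the list of canonical forms for the three-dimensional case includes $\begin{bmatrix}1&0&0\\0&\lambda&0\\0&a&\lambda\end{bmatrix}$ with $a$ not required to vanish, and the classification theorem (the case $\lambda_i=1$, $\lambda\neq1$) likewise retains a nonzero row $M_{1,k_i}$. Your argument becomes a complete and correct proof if one adds the hypothesis that $1$ is not an eigenvalue of $A$ (equivalently $a+d\neq 1+\lambda$), or one must weaken the conclusion to allow a nonzero $(3,2)$ entry; as the statement stands, the missing case is not merely unproved but false.
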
 
\begin{example}
From the upper triangular matrices  algebra $ \mathcal{H} $ spanned by
\begin{center}
$X=\left(\begin{array}{ccc}0&1&0\\
0&0&0\\
0&0&0
\end{array}\right) ,$ \
$Y=\left(\begin{array}{ccc}0&0&0\\
0&0&1\\
0&0&0
\end{array}\right), $
$Z=\left(\begin{array}{ccc}0&0&1\\
0&0&0\\
0&0&0
\end{array}\right) ,$
\end{center}
 The defining non zero relation  is $ [X,Y]= Z. $ 
we consider the linear map $\alpha_\lambda:\mathcal{H}\rightarrow \mathcal{H}$ defined by:
\[\alpha_\lambda(X)=aX+bY,\qquad \ \alpha_\lambda(Y)=c X+dY,\qquad \ \alpha_\lambda(Z)=\lambda Z,\]
where $ad-bc=\lambda$. We obtain a family of Heisenberg  Hom-Lie algebra $ (\mathcal{H}^3 ,[\cdot,\cdot],\alpha_\lambda)$.
\end{example}
\begin{example}
With the differential operators $ C^{\infty}(\R^3)\rightarrow C^{\infty}(R^3)$ defined by \[X=\partial_x-\frac{1}{2}y\partial_z,\qquad Y=\partial_y+\frac{1}{2}\partial_z,\qquad Z=\partial_z,\]
and the linear map $\alpha_\lambda:C^{\infty}(\R^3)\rightarrow C^{\infty}(\R^3)$ defined by: \[\alpha_\lambda(X)=a\partial_x+b\partial_y+\frac{1}{2}(b-ay)\partial_z\]
\[\alpha_\lambda(Y)=c\partial_x+d\partial_y+\frac{1}{2}(d-cy)\partial_z\]
\[\alpha_\lambda(Z)=\lambda \partial_z.\]
With $ad-bc=\lambda$,
we can defined  a
three- dimensional Heisenberg  Hom-Lie algebras.
\end{example}
\begin{example}
Consider the algebra of all operators on functions of one real variable. Let $\mathfrak{h}$ denote the three dimensional space of operators spanned by identity operator, $1$, the operator consisting of multiplication by $x$, and the operator $\frac{d}{dx}$ :
\begin{align*}
1&: f\mapsto f\\
X&:f\mapsto xf\\
\frac{d}{dx}&: f\mapsto\frac{d\,f}{dx}.
\end{align*}
Define a skew-symmetric bilinear bracket operation $[\cdot,\cdot]$ on $\mathfrak{h}$ by \[[u,v]=uv-vu,\qquad \forall u, v\in \mathfrak{h}. \]
Define $\alpha\in \mathfrak{g}\mathfrak{l}(\mathfrak{h})$ by \[ \alpha(1)=\lambda 1,\quad \alpha(X)(f)(x)=axf(x)+b\frac{d\,f}{d\,x}(x) ,\quad \alpha(\frac{d}{dx})(f)(x)=cxf(x)+d \frac{d\,f}{d\,x}(x)\]
where $ad-bc=\lambda$.

With the above notations $ (\mathfrak{h},[\cdot,\cdot],\alpha) $ is a  Heisenberg  Hom-Lie algebra.
\end{example}
\begin{prop}
	Let    $(\mathcal{H}^1_{\lambda},\ [.,.],\ \alpha)$  be an Heisenberg  Hom-Lie algebra  over a field $ \C $. Then there exists a basis of $ \mathcal{H}^1_{\lambda} $ such that  $[X,Y] =Z,\, [X,Z]=[Y,Z]=0   $ and The matrix of T with respect to this basis has a one of the following  ones:
\begin{multicols}{2}
	\begin{enumerate}[(i)]
		\item  $\begin{bmatrix}
		1&0&0\\
		1&1&0\\
		0&a&1
		\end{bmatrix}  $ ( $ \lambda=1 $ )
		\item  $\begin{bmatrix}
		1&0&0\\
		0&\lambda&0\\
		0&a&\lambda
		\end{bmatrix}  $
			\item  $\begin{bmatrix}
		\mu&1&0\\
		0&\mu&0\\
		0&0&\mu^2
		\end{bmatrix}  $ ( $ \lambda=\mu^2 $ ). 
					\item  $\begin{bmatrix}
		\mu&0&0\\
		0&\frac{\lambda}{\mu}&0\\
		0&0&\lambda
		\end{bmatrix}  $ . 
	\end{enumerate}
\end{multicols}
\end{prop}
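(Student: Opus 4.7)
The plan is to start from the form of $\alpha$ produced by the structure theorem of Section 3 and reduce it further by a careful choice of basis $(X,Y,Z)$ satisfying $[X,Y]=Z$. That theorem gives the block presentation
\[\mathrm{Mat}(\alpha)=\begin{bmatrix} a & c & 0 \\ b & d & 0 \\ \ell & m & \lambda \end{bmatrix},\qquad M:=\begin{pmatrix}a&c\\ b&d\end{pmatrix},\ \ \det M=\lambda,\]
where $\alpha(Z)=\lambda Z$ and $\lambda\neq0$ because any multiplicative Heisenberg Hom-Lie algebra is regular (the Corollary to the theorem). The admissible changes of basis preserving $[X,Y]=Z$ come in two flavors: a $GL_2$ change $X'=pX+qY$, $Y'=rX+sY$, which forces $Z'=(ps-qr)Z$ and conjugates $M$ by $\begin{pmatrix}p&r\\ q&s\end{pmatrix}$; and a shear $X'=X+\alpha_1Z$, $Y'=Y+\alpha_2Z$, which fixes $M$ and modifies the bottom row by $(\ell,m)\mapsto(\ell,m)+(\alpha_1,\alpha_2)(\lambda I-M)$, as a short substitution shows.

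First I would use the conjugation freedom to put $M$ in Jordan normal form over $\C$. Two possibilities arise: $M$ is diagonalizable with $M\sim\mathrm{diag}(\mu,\nu)$ and $\mu\nu=\lambda$; or $M$ is a single Jordan block with eigenvalue $\mu$ satisfying $\mu^2=\lambda$. Then I would use the shear to kill as much of $(\ell,m)$ as possible. The key observation is that $\lambda I-M$ is invertible exactly when $\lambda$ is not an eigenvalue of $M$, and combined with $\det M=\lambda$ this obstruction occurs precisely when one of the eigenvalues equals $1$ (or, in the double-root case, when $\lambda=1$ itself).

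Splitting into four cases then produces exactly the forms listed. If $M$ is diagonal with eigenvalues $\mu,\lambda/\mu$ both different from $1$, the shear clears the bottom row and gives case (iv). If $M$ is diagonal with eigenvalues $1,\lambda$ and $\lambda\neq1$, the matrix $\lambda I-M=\mathrm{diag}(\lambda-1,0)$ has rank one, only one coordinate of $(\ell,m)$ survives, and we get case (ii). If $M$ is a Jordan block with eigenvalue $\mu\neq1$, then $\det(\lambda I-M)=(\lambda-\mu)^2=\mu^2(\mu-1)^2\neq0$, the bottom row is cleared, and we get case (iii). Finally, if $M$ is a Jordan block with $\mu=1$ (so $\lambda=1$), the matrix $\lambda I-M$ has rank one, one coordinate of $(\ell,m)$ survives, and we reach case (i).

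The main obstacle will be the bookkeeping in the two degenerate cases (i) and (ii): one must check which specific entry of $(\ell,m)$ is cleared by the shear, and then use the residual freedom in the choice of Jordan basis (swapping the two basis vectors and rescaling within the centralizer of $M$) to place the surviving scalar in the prescribed $(3,2)$ slot, as stated. Once this is organized, the classification follows directly from Jordan normal form together with the rank calculation of $\lambda I-M$.
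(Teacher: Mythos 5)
The paper states this proposition without any proof, so there is nothing to compare your argument to; judged on its own, your strategy (normalize the upper $2\times 2$ block $M$ by the $GL_2$ part of the bracket-preserving basis changes, then use the shear $X\mapsto X+\alpha_1Z$, $Y\mapsto Y+\alpha_2Z$, which moves the bottom row by $(\alpha_1,\alpha_2)(\lambda I-M)$, and finish by a rank computation on $\lambda I-M$) is sound and does produce the four listed forms. Your shear formula is correct, the criterion ``$\lambda I-M$ singular iff $1$ is an eigenvalue of $M$'' is correct given $\det M=\lambda\neq 0$, and the four cases you treat land on (iv), (ii), (iii), (i) respectively.

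There is, however, one subcase your split does not cover: $M$ diagonalizable with \emph{both} eigenvalues equal to $1$, i.e. $M=I$ and $\lambda=1$. This is not vacuous --- $\alpha(X)=X+\ell Z$, $\alpha(Y)=Y+mZ$, $\alpha(Z)=Z$ is multiplicative for any $(\ell,m)$ --- and it falls through your trichotomy: it is neither ``diagonal with both eigenvalues $\neq 1$'', nor ``eigenvalues $1,\lambda$ with $\lambda\neq 1$'', nor a Jordan block. Worse, in this subcase $\lambda I-M=0$, so the shear clears nothing and your main mechanism is unavailable. The fix is the other half of the symmetry group you already set up: since $M=I$ is central, the $GL_2$ change of basis fixes $M$ but sends the bottom row to $\tfrac{1}{\det Q}(\ell,m)Q$, so a nonzero $(\ell,m)$ can be rotated into the $(3,2)$ slot, landing in form (ii) with $\lambda=1$ (or in form (iv) if $(\ell,m)=0$). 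With that subcase added, and the bookkeeping you already flag for placing the surviving scalar in the $(3,2)$ position in cases (i) and (ii), the argument is complete.
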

\subsection{Classification of Heisenberg  Hom-Lie algebras.}
 Let $E $ be a vector space over field $ \C $, $ E^{*} $ be its dual space.
Let  $ \gamma : E\rightarrow E $ be an isomorphism. Let $ \beta : E^*\rightarrow E^* $,
$ \mu : E^*\rightarrow E $ and $ \eta :E^*\rightarrow \C$ be   linear transformations.
Define a skew-symmetric bilinear bracket operation $[\cdot,\cdot]$ on $ E\oplus E^*\oplus \C $ by 
\[  [x+f+t,y+g+t']=g(x)-f(y) \qquad \forall x,\, y \in E, f,g\in E^*,\, t,t'\in\C. \]

Define $ \alpha\in End(E\oplus E^*\oplus \C) $ by \[ \alpha(x+f+t)=\gamma(x)+\beta(f)+\mu(f)+\eta(f)+t\lambda \qquad (x,f,t)\in(E,E^*,\C) .\]
\begin{thm}
With the above notations :		
		
	$ (E\oplus E^*\oplus \C,[\cdot,\cdot],\alpha) $ is a Heisenberg  Hom-Lie algebra if and only if  \[\beta\,^{t}\gamma=\lambda I_m \quad\text{ and   } \quad \,^{t}\mu\, \beta=\, ^{t}\beta\, \mu .\] \\
	Then, if $ \mu =0 $ and $ \beta\,^{t}\gamma=\lambda I_m $. The extension of an abelian  Hom-Lie algebra $ (E,[\cdot,\cdot]_0,\gamma) $ by a Hom-module $ (E^*\oplus \C,\beta+\lambda) $:
	$$0\longrightarrow (E^*\oplus\C,\beta+\lambda)\longrightarrow (E\oplus E^*\oplus \C,[\cdot,\cdot],\alpha)\longrightarrow (E,[\cdot,\cdot]_0,\gamma) \longrightarrow 0 $$
	define a Heisenberg  Hom-Lie algebra. We call it the 	Hom-Lie algebra generated by $ E. $
\end{thm}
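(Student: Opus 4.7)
The strategy is to observe that the Lie-algebraic part of the ``Heisenberg'' conditions is automatic from the bracket formula, so the content of the statement is carried entirely by the multiplicativity requirement $\alpha\circ[\cdot,\cdot]=[\alpha(\cdot),\alpha(\cdot)]$, which I would expand in coordinates and read off as the two displayed matrix identities.

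First I would dispose of the ``easy'' axioms. Since the bracket $[x+f+t,y+g+t']=g(x)-f(y)$ lands in $\C$ and $\C$ is central for this bracket, one has $[\mathcal{H},\mathcal{H}]\subset\C\subset Z(\mathcal{H})$, with equality throughout as soon as $E\neq 0$. This immediately yields nilpotency in two steps, the one-dimensional derived ideal, and the equality $[\mathcal{H},\mathcal{H}]=Z(\mathcal{H})$. The Hom-Jacobi identity $\circlearrowleft [\alpha(u),[v,w]]=0$ is automatic because every $[v,w]$ is central, and skew-symmetry is built in.

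The heart of the argument is the multiplicativity condition. For $u=x+f+t$, $v=y+g+t'$ one has
\[
\alpha([u,v])=\alpha\bigl(g(x)-f(y)\bigr)=\lambda\bigl(g(x)-f(y)\bigr),
\]
while, since only the $E$- and $E^*$-components of $\alpha(u),\alpha(v)$ enter the bracket,
\[
[\alpha(u),\alpha(v)] =\bigl\langle\beta(g),\gamma(x)+\mu(f)\bigr\rangle - \bigl\langle\beta(f),\gamma(y)+\mu(g)\bigr\rangle.
\]
Equating these for all $x,y\in E$ and $f,g\in E^*$ splits cleanly into two independent equivalent conditions: the $\gamma$-terms give $\langle\beta(g),\gamma(x)\rangle=\lambda\langle g,x\rangle$, which in coordinates reads $\,^{t}\gamma\,\beta=\lambda I_m$ and, using that $\gamma$ is an isomorphism, is equivalent to $\beta\,^{t}\gamma=\lambda I_m$; the $\mu$-terms give $\langle\beta(g),\mu(f)\rangle=\langle\beta(f),\mu(g)\rangle$ for all $f,g$, which is the symmetry of the matrix $\,^{t}\beta\,\mu$, i.e.\ $\,^{t}\mu\,\beta=\,^{t}\beta\,\mu$. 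Observe that the $\eta$-term and the scalar $t\lambda$-term in $\alpha$ drop out of this computation, lying entirely in the central factor, which is precisely why $\eta$ is left unconstrained by the theorem.

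For the extension statement I would observe that once $\mu=0$, the subspace $E^*\oplus\C$ is $\alpha$-stable with $\alpha|_{E^*\oplus\C}=\beta+\lambda$, is an ideal of $\mathcal{H}$ since $[\mathcal{H},E^*\oplus\C]\subset\C\subset E^*\oplus\C$, and carries the trivial bracket; the quotient $\mathcal{H}/(E^*\oplus\C)\cong E$ inherits the zero bracket together with $\gamma$. The displayed short exact sequence is therefore a Hom-Lie algebra extension of the abelian $(E,0,\gamma)$ by the abelian Hom-module $(E^*\oplus\C,\beta+\lambda)$. The only non-routine bookkeeping step in the whole argument is the transpose conversion, where one must invoke $\gamma\in GL(E)$ to pass between $\,^{t}\gamma\,\beta=\lambda I_m$ and $\beta\,^{t}\gamma=\lambda I_m$.
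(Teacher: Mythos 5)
Your proof is correct; the paper itself states this theorem without supplying a proof, so there is no argument of the author's to compare against, but your reduction of everything to the multiplicativity identity is exactly what the statement requires, and your coordinate computation correctly yields $^{t}\gamma\,\beta=\lambda I_m$ (equivalent to $\beta\,^{t}\gamma=\lambda I_m$ since $\gamma$ is invertible) from the $\gamma$-terms and $^{t}\mu\,\beta=\,^{t}\beta\,\mu$ from the $\mu$-terms, while the skew-symmetry, Hom-Jacobi, nilpotency and $[\mathcal{H},\mathcal{H}]=Z(\mathcal{H})=\C$ conditions are indeed automatic from the form of the bracket. The only cosmetic point worth noting is that in the extension part the restriction of $\alpha$ to the ideal $E^*\oplus\C$ is $\beta+\eta+\lambda$ rather than $\beta+\lambda$ unless one also takes $\eta=0$, a discrepancy already present in the theorem's own notation rather than a gap in your argument.
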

	\begin{thm}
		Let  $ \lambda\in \C^* $ and $(\mathcal{H}^m_{\lambda},\ [\cdot,\cdot],\ \alpha)$  be an Heisenberg  Hom-Lie algebra over a field $ \C $. Let $ m(x)=(x-\lambda)^{k}(x-\lambda_1)^{k_1}\cdots (x-\lambda_r)^{k_r}$ denote the minimal polynomial of $ \alpha. $ Let $W(\lambda)=\ker(\alpha-\lambda id)^{k}   $  and $ W(\lambda_i)=\ker(\alpha-\lambda_iid)^{k_i} .$
			\begin{enumerate}[(i)]
				\item if $  \lambda_{i}^{2} \neq \lambda $, $  \lambda_{i} \neq \lambda $
	 and $ \lambda_{i} \neq 1 .$ Let $ \mathcal{I}_i=W(\lambda_i)\oplus W(\frac{\lambda}{\lambda_i}). $ Then, $(\mathcal{I}_i \oplus\C z ,\ [.,.],\ \alpha_{/ \mathcal{I}_i \oplus\C z})$  be  Heisenberg  Hom-Lie algebra.
 The  matrix of  restricted 
		transformation $\alpha_{/ \mathcal{I}_i \oplus\C z}:\mathcal{I}_i \oplus\C z \rightarrow \mathcal{I}_i \oplus\C z$
		has  the following forms 
 \begin{align}
			P=\begin{bmatrix}
			X_{k_i,k_i}&0_{k_i,k_i}&0_{k_i,1}\\
			0_{k_i,k_i}&Y_{k_i,k_i}&0_{k_i,1}\\
			0_{1,k_i}&0_{1,k_i}&\lambda
			\end{bmatrix}
			\end{align}
			where 
			$\fourIdx{t}{}{}{}{X_{k_i,k_i}}\,Y_{k_i,k_i}=\lambda I_{k_i,k_i}.$
			\item 	 if $ \lambda \neq 1$	 and $ \lambda_{i}^{2}=\lambda$. We have 
			$(W(\lambda_i) \oplus\C z ,\ [\cdot,\cdot],\ \alpha_{/ W(\lambda_i) \oplus\C z})$  be  Heisenberg  Hom-Lie algebra. The  matrix of  restricted 
			transformation $\alpha_{/ W(\lambda_i)  \oplus\C z}:W(\lambda_i) \oplus\C z \rightarrow W(\lambda_i) \oplus\C z$
			has  the following form
			\begin{align}
			P=\begin{bmatrix}
			X_{k_i,k_i}&T_{k_i,k_i}&0_{k_i,1}\\
			0_{k_i,k_i}&Y_{k_i,k_i}&0_{k_i,1}\\
			0_{1,k_i}&0_{1,k_i}&\lambda
			\end{bmatrix}
			\end{align}
			where 	 $\begin{bmatrix}
			X_{k_i,k_i}&T_{k_i,k_i}\\
			0_{k_i,k_i}&Y_{k_i,k_i}
			\end{bmatrix}$
			is $\lambda$-symplectic 	and $ t_{pq}=0\quad \forall p\neq k_i. $

			\item  if $ \lambda_i=1$	 and $ \lambda \neq 1\ $
			 We have 
			 $(W(\lambda_i) \oplus W(\lambda) ,\ [\cdot,\cdot],\ \alpha_{/ W(\lambda_i) \oplus W(\lambda)})$  be  Heisenberg  Hom-Lie algebra. The  matrix of  restricted 
			 transformation $\alpha_{/ W(\lambda_i)  \oplus W(\lambda)  }:W(\lambda_i) \oplus W(\lambda)  \rightarrow W(\lambda_i) \oplus W(\lambda) $
			 has  the following form
			 \begin{align}
			 P=\begin{bmatrix}
			 X_{k_i,k_i}&0_{k_i,k_i}&0_{k_i,1}\\
			 0_{k_i,k_i}&Y_{k_i,k_i}&0_{k_i,1}\\
			 0_{1,k_i}&M_{1,k_i}&\lambda
			 \end{bmatrix}
			 \end{align}
			 $\begin{bmatrix}
			 X_{k_i,k_i}&0_{k_i,k_i}\\
			 0_{k_i,k_i}&Y_{k_i,k_i}
			 \end{bmatrix}$
			 is $\lambda$-symplectic  and\qquad $ m_{1q}=0\quad \forall q\neq k_i. $
			  	\item  if  $ \lambda = 1\ $ 
			  	$(W(\lambda)  ,\ [\cdot,\cdot],\ \alpha_{/ W(\lambda) })$  be  Heisenberg multiplicative Hom-Lie algebra. The  matrix of  restricted 
			  	transformation $\alpha_{/ W(\lambda)}:W(\lambda)  \rightarrow W(\lambda) $
			  	has  the following form
			  	\begin{align}
			  	P=\begin{bmatrix}
			  	X_{k_i,k_i}&T_{k_i,k_i}&0_{k_i,1}\\
			  	0_{k_i,k_i}&Y_{k_i,k_i}&0_{k_i,1}\\
			  	M_{1,k_i}&0_{1,k_i}&\lambda
			  	\end{bmatrix}
			  	\end{align}
			  	where 	 $\begin{bmatrix}
			  	X_{k_i,k_i}&T_{k_i,k_i}\\
			  	0_{k_i,k_i}&Y_{k_i,k_i}
			  	\end{bmatrix}$
			  	is $\lambda$-symplectic 	and $ t_{pq}=0\quad \forall p\neq k_i. $ 
				\item 	 if $ \lambda = 1$	 and $ \lambda_{i}=-1$. We have 
			$(W(\lambda_i) \oplus\C z ,\ [.,.],\ \alpha_{/ W(\lambda_i) \oplus\C z})$  be  Heisenberg  Hom-Lie algebra. The  matrix of  restricted 
			transformation $\alpha_{/ W(\lambda_i)  \oplus\C z}:W(\lambda_i) \oplus\C z \rightarrow W(\lambda_i) \oplus\C z$
			has  the following form
			\begin{align}
			P=\begin{bmatrix}
			X_{k_i,k_i}&T_{k_i,k_i}&0_{k_i,1}\\
			0_{k_i,k_i}&Y_{k_i,k_i}&0_{k_i,1}\\
			0_{1,k_i}&0_{1,k_i}&\lambda
			\end{bmatrix}
			\end{align}
			where 	 $\begin{bmatrix}
			X_{k_i,k_i}&T_{k_i,k_i}\\
			0_{k_i,k_i}&Y_{k_i,k_i}
			\end{bmatrix}$
			is $\lambda$-symplectic 	and $ t_{pq}=0\quad \forall p\neq k_i. $	
		\end{enumerate}
	\end{thm}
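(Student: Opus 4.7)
The plan is to decompose $\mathcal{H}^m_\lambda$ into generalized eigenspaces of $\alpha$ and analyze each block using the $\lambda$-symplectic structure from the preceding structure theorem. Write $\mathcal{H}^m_\lambda=W(\lambda)\oplus W(\lambda_1)\oplus\cdots\oplus W(\lambda_r)$; since $\alpha(z)=\lambda z$ (immediate from multiplicativity applied to $z=[x_k,y_k]$), the center $\C z$ sits inside $W(\lambda)$, and each $W(\lambda_i)$ is $\alpha$-stable.

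The key technical step I would establish first is that the skew form $\mathcal{B}$ defined by $[x,y]=\mathcal{B}(x,y)z$ satisfies $\mathcal{B}(\alpha x,\alpha y)=\lambda\, \mathcal{B}(x,y)$, which is immediate from multiplicativity together with $\alpha(z)=\lambda z$. From this it follows that $\mathcal{B}(W(\mu_i),W(\mu_j))=0$ whenever $\mu_i\mu_j\neq \lambda$. The proof is a short induction on Jordan-chain depth: for $u\in W(\mu_i)$, $v\in W(\mu_j)$ of depths $a,b$, write $\alpha u=\mu_i u+u'$ and $\alpha v=\mu_j v+v'$ with $u',v'$ of strictly smaller depth; expanding $\lambda \mathcal{B}(u,v)=\mathcal{B}(\alpha u,\alpha v)$ and using the inductive hypothesis to kill the cross-terms yields $(\lambda-\mu_i\mu_j)\mathcal{B}(u,v)=0$.

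Given this orthogonality, the bracket pairs $W(\lambda_i)$ nontrivially only with $W(\lambda/\lambda_i)$, possibly itself when $\lambda_i^2=\lambda$. The six cases correspond to how $W(\lambda_i)$ relates to $\lambda$ and to $1$, and I would treat each by assembling the appropriate $\alpha$-stable subspace and reading off the matrix form. In cases (i) and (iii) the paired generalized eigenspaces are distinct, so $\alpha$ has block-diagonal shape on them and the relation ${}^{t}\!X_{k_i,k_i}\,Y_{k_i,k_i}=\lambda I$ comes from non-degeneracy of $\mathcal{B}$ restricted to $W(\lambda_i)\times W(\lambda/\lambda_i)$; in case (iii) the extra block $M_{1,k_i}$ arises because $\alpha|_{W(\lambda)}$ may send vectors outside $\C z$ into $\C z$. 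In cases (ii), (iv) and (v) the space pairs with itself, yielding the full $\lambda$-symplectic block with a nontrivial $T$-component.

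The main obstacle I expect is justifying the sparse pattern $t_{pq}=0$ for $p\neq k_i$ (and analogously $m_{1q}=0$ for $q\neq k_i$) in the self-pairing cases. This should follow from refining the orthogonality lemma to the Jordan filtration $W_a=\ker(\alpha-\lambda_i)^a\cap W(\lambda_i)$: a depth recursion analogous to the one above, combined with skew-symmetry of $\mathcal{B}$, forces the pairing $\mathcal{B}(W_a,W_b)$ to vanish when $a+b$ lies below a Jordan-block-dependent threshold, which in a basis adapted to the Jordan chain translates into exactly the claimed vanishing of off-diagonal $T$-entries. Once these constraints are in hand, the matrix form in each case follows by applying the $\lambda$-symplectic condition of the structure theorem to the relevant $2k_i\times 2k_i$ block.
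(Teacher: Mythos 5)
Your proposal follows essentially the same route as the paper: decompose $\mathcal{H}^m_{\lambda}$ into generalized eigenspaces of $\alpha$, use multiplicativity to obtain the scaling relation $\mathcal{B}(\alpha(x),\alpha(y))=\lambda\,\mathcal{B}(x,y)$, and induct on Jordan-chain depth to conclude that $W(\mu_i)$ can pair nontrivially only with $W(\lambda/\mu_i)$, before assembling the stated blocks case by case. The paper performs the identical depth induction written out on explicit Jordan-basis vectors $e^i_k$, and in the self-paired case $\lambda_i^2=\lambda$ it carries out the explicit inductive construction of the adapted basis $(u_1,\dots,u_{r_i},w_1,\dots,w_{r_i},z)$ with $[u_p,w_{2r_i-p+1}]=z$ --- precisely the refinement you flag as the remaining obstacle --- so your plan matches the paper's argument in both structure and substance.
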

	\begin{proof}
		We have  $ \mathcal{H}^m_{\lambda}=\ker\left(\alpha- \lambda_1I\right)^{k_1} \oplus\cdots \oplus\ker\left(\alpha- \lambda_rI\right)^{k_r}
	\oplus\ker\left(\alpha- \lambda I\right)^{k}	
	 .$
		\begin{Case}
	 $ \lambda_{i}^{2}\neq\lambda $, $ \lambda_{i}\neq\lambda $ and  $ \lambda_{i}\neq 1 $\\
		We choose a
		basis  $  (e^{i}_{1},e_{2}^{i},\cdots,e_{r_{i}}^{i}) $
		corresponding to the  Jordan block  $J(\lambda_{i})$. 
		We have $ [e^{i}_{1},e_{2}^{i}]=\mathcal{B}(e^{i}_{1},e_{2}^{i})z. $ Then  \[B(e^{i}_{1},e_{2}^{i})\lambda z=\alpha(B(e^{i}_{1},e_{2}^{i})z)=\alpha([e^{i}_{1},e_{2}^{i}])=[\alpha(e^{i}_{1}),\alpha(e_{2}^{i})]= [\lambda_ie^{i}_{1},e_{1}^{i}+\lambda_ie_{2}^{i}]=B(e^{i}_{1},e_{2}^{i})\lambda^2_i. \]
		Which gives $[e^{i}_{1},e_{2}^{i}]=0.  $ Then,
		by induction we can show that
		\[ [e^{i}_{k},e_{l}^{i}]=0,\quad \forall k,\ l\in \{1,\cdots,r_i\}.  \]
		Then there exist $k\in \{1,\cdots,r_j\}	$ such that $ [e^{i}_{1},e_{k}^{j}]\neq 0. $\\
		We have \[ B(e^{i}_{1},e_{1}^{j})\lambda z=\alpha([e^{i}_{1},e_{1}^{j}])=[\alpha(e^{i}_{1}),\alpha(e_{1}^{j})]=[\lambda_ie^{i}_{1},\lambda_j e_{1}^{j}]=\lambda_i\lambda_jB(e^{i}_{1},e_{1}^{j}) z.  \]
		 \[ B(e^{i}_{1},e_{k}^{j})\lambda z=\alpha([e^{i}_{1},e_{k}^{j}])
		 =[\alpha(e^{i}_{1}),\alpha(e_{k}^{j})]
		 =[\lambda_ie^{i}_{1},e_{k-1}^{j}+\lambda_j e_{1}^{j}]
		 =\lambda_i\mathcal{B}(e^{i}_{1},e_{k-1}^{j})z
		+ \lambda_i\lambda_jB(e^{i}_{1},e_{1}^{j}) z.  \]
		Hence, by induction, we deduce that $\lambda_i\lambda_j=\lambda.  $	With $\lambda_j\notin \{\lambda,\lambda_i,1\}$, \\
		$ W(\lambda_i)+W(\frac{\lambda}{\lambda_i})+<z> $  is a direct sum. 
	Finally, one can deduce\\
	 $\mathcal{H}_{\lambda}^{k_i}= W(\lambda_i)\oplus W(\frac{\lambda}{\lambda_i}) \oplus<z>  $ is 	
	 Heisenberg  Hom-Lie algebra. The  matrix of  restricted 
	transformation $\alpha_{/\mathcal{H}_{\lambda}^{k_i}}:\mathcal{H}_{\lambda}^{k_i} \rightarrow  \mathcal{H}_{\lambda}^{k_i}$
	has  the following form
	\begin{align}
	\begin{bmatrix}
	X_{k_i,k_i}&0_{k_i,k_i}&0_{k_i,1}\\
	0_{k_i,k_i}&Y_{k_i,k_i}&0_{k_i,1}\\
	0_{1,k_i}&0_{1,k_i}&\lambda
	\end{bmatrix}
	\end{align}	
		where 
	$\fourIdx{t}{}{}{}{X_{k_i,k_i}}\,Y_{k_i,k_i}=\lambda I$.		 			
	\end{Case}	
\begin{Case}
	$ \lambda\neq 1 $ and $ \lambda_{i}^{2}=\lambda $.\\
	We choose a
	basis  $  (e^{i}_{1},e_{2}^{i},\cdots,e_{2r_{i}}^{i}) $
	corresponding to the  Jordan block  $J(\lambda_{i})$.\\
	We suppose that $ [e^{i}_{1},e^{i}_{1}]= [e^{i}_{1},e_{2}^{i}] =\cdots=[e^{i}_{1},e_{k-1}^{i}]=0$ and $ [e^{i}_{1},e_{k}^{i}]\neq 0 .$ Let $ [e^{i}_{1},e_{k+1}^{i}]=\mu_{k+1,i}z .$
	We have 
	\begin{align*}
	\mu_{k+1,i}\lambda z=\alpha(\mu_{k+1,i}z)&= \alpha\left( [e^{i}_{1},e_{k+1}^{i}]\right)\\
	& = [\alpha(e^{i}_{1}),\alpha(e_{k+1}^{i})]=[\lambda_ie^{i}_{1},e_{k}^{i}+\lambda_ie_{k+1}^{i}]=
	\lambda_i[e^{i}_{1},e_{k}^{i}]+\lambda_i^2\mu_{k+1,i}z.
	\end{align*}
	We obtain $\lambda_{i}[e^{i}_{1},e_{k}^{i}]=0.  $ That it is not true. We deduce 
	\[[e^{i}_{1},e_{2}^{i}] =\cdots=[e^{i},e_{2r_{i}-1}^{i}]=0 \quad \text{ and }\quad [e^{i}_1,e_{2r_{i}}^{i}]\neq 0. \] 
	Then, by induction we obtain
	\[ [e^i_{k},e^i_{1}]=[e^{i}_{k},e^i_2]=\cdots=
	[e^i_k,e_{2r_{i}-k}^i]=0,\quad \forall k\in\{1,\cdots,r_i\}  \] and $[e^i_k,e^i_{2r_{i}-k+1}]\neq 0.  $ Denote $ V_i=<e^{i}_{1},\cdots,e^{i}_{r_i}> .$ Then, the subspace $ V_i  $ is  isotropic. Denote $ W_i=<e_{r_i+1},\cdots,e_{2r_i}> $ and $ \mathcal{I}_i=V_i\oplus W_i. $\\
	We will prove by induction that, for all  $k \in \{1,\cdots,r_i\} $ we can write
	\begin{equation}\label{defH}
	V_i=<u_1,u_2,\cdots,u_k>\oplus H_k  \text{ and } 
	W_i= L_k\oplus<w_{2r_i-k+1},w_{2r_i-k},\cdots,w_{2r_i-1},w_{2r_i} > 
	\end{equation}
	such that
	\begin{align*}
	&[u_p,u_q]=[u_p,H_k]=[u_p,L_k]=[u_p,L_k]=[u_p,w_{l}]=0,\quad \forall l\neq 2r_{i}-p+1\\
	&[w_p,w_q]=[w_p,H_k]=[w_p,L_k]=0,\\
	&[u_{p},w_{2r_i-p+1}]=z,\quad \forall p\in\{1,\cdots,k\}.
	\end{align*}
	\textbf{Base case}: When $ k=1 $, denote $ u_1=\frac{1}{\mu_{1,2r_i}}e^{i}_{1} $ where $ [e^{i}_1,e_{2r_{i}}^{i}]=\mu_{1,2r_i}z $
	and $ w_{i}=e^i_{2r_i} .$	
	Recall that $ [x,y]=B(x,y)z. $ We define a linear functional $ g_{2r_i}:V_i\longrightarrow \R $
	by $ g_{2r_i}(x)=\mathcal{B}(w_{i},x) .$ We have $ V_i=<u_1>\oplus H_1 $ where $ H_1=\ker  g_{2r_i}.  $
	We define a linear functional $ f_{2r_i}:\mathcal{I}_i\longrightarrow \R $
	by $ f_{2r_i}(x)=\mathcal{B}(w_{_i},x) .$ We have $ \mathcal{I}_i=<u_1>\oplus H_1\oplus H'_1\oplus<w_{r_i}> $ where $ H_1'\oplus <w_{r_i}>=\ker  f_{2r_i}.  $
	We define a linear functional \[ h_{1}:<u_1>\oplus H'_1\oplus<w_{r_i}>\longrightarrow \R 
	\quad \text{ by }\quad h_{1}(x)=\mathcal{B}(u_{1},x) .\] We have $<u_1>\oplus H'_1\oplus<w_i>
	=<u_1>\oplus L_1 \oplus<w_{i}>$ where \[<u_1>\oplus L_1
	=\ker  h_{1}.  \]
	We deduce $ \mathcal{I}_i =<u_1>\oplus H_1\oplus L_1\oplus<w_{r_i} > $ such that \[ [u_1,H_1]=[u_1,L_1]=[w_i,H_1]=[w_i,L_1]=0, \quad [u_1,w_{r_i}]=z ,\] 
	\textbf{Induction step} : Let $ k\in\{1,\cdots,r_{i}-1\} $
	be given and suppose \eqref{defH} is true for $ k $. 
	Let $ u_{k+1}\in H_k $. Since $ 0=[u_{k+1},u_p]=[u_{k+1},H_k]=[u_{k+1},w_q] $, by $ Z(\mathcal{H}^m_{\lambda})=<z> $ there exist
	$ w_{2r_i-k}\in  L_k$ such that $ B(u_k,w_{2r_{i}-k})\neq 0. $
	With the same method as in the previous case, we obtain
	\begin{equation*}
	V_i=<u_1,u_2,\cdots,u_{k+1}>\oplus H_{k+1}  \text{ and } 
	W_i= L_{k+1}\oplus<w_{2r_i-k},w_{2r_i-k+1},\cdots,w_{2r_i-1},w_{2r_i} > 
	\end{equation*}
	such that
	\begin{align*}
	&[u_p,u_q]=[u_p,H_{k+1}]=[u_p,L_{k+1}]=[u_p,L_{k+1}]=[u_p,w_{l}]=0,\quad \forall l\neq 2r_{i}-p+1\\
	&[w_p,w_q]=[w_p,H_{k+1}]=[w_p,L_{k+1}]=0,\\
	&[u_{p},w_{2r_i-p+1}]=z,\quad \forall p\in\{1,\cdots,k+1\}.
	\end{align*}	 
	\textbf{Conclusion}: By the principle of induction, \eqref{defH} is true for all $k \in \{1,\cdots,r_i\} $.	\\
	Thus, with $ k= r_i$ ,  we obtain  $W(\lambda_i) \oplus\C z=\mathcal{H}^{r_i}_{\lambda}$. The matrix representation of 
	$\alpha_{/ \mathcal{H}^{r_i}_{\lambda}}  $	 respect to the basis $ (u_1,u_2,\cdots,u_{r_i},w_{1},\cdots,w_{r_i},z) $ is of the form
	\begin{align}
	\begin{bmatrix}
	X_{r_i,r_i}&T_{r_i,r_i}&0_{r_i,1}\\
	0_{r_i,r_i}&Y_{r_i,r_i}&0_{r_i,1}\\
	0_{1,r_i}&0_{1,r_i}&\lambda
	\end{bmatrix}
	\end{align}
	where 
	$\fourIdx{t}{}{}{}{X_{r_i,r_i}}\,Y_{r_i,r_i}=\lambda\, I$, \quad 
	$\fourIdx{t}{}{}{}{Y_{r_i,r_i}}\,T_{r_i,r_i}
	=\fourIdx{t}{}{}{}{T_{r_i,r_i}}\,Y_{r_i,r_i}$	 
	and $ t_{pq}=0\quad \forall p\neq r_i. $
\end{Case}	 	
The proof of (\crm{3}) , (\crm{4}) ,(\crm{5})   is very similar to the proof of (\crm{1}) and (\crm{2}) ,

	
		
		\end{proof}
\begin{cor}\label{hm} If $ \lambda\neq 1 $ or $ k=1 $.
There is a basis of $\mathcal{H}^m_{\lambda}  $ given by $ (x_1,\cdots,x_m, y_1,\cdots,y_m,z)$ such that $ [x_{i},y_{j}]=\delta_{ij}z,[x_i,y_j]=[y_i,y_j]=0 $ and the corresponding  matrix is of form
\begin{align}\label{alpha}
			P=\begin{bmatrix}
			X_{m,m}&T_{m,m}&0_{m,1}\\
			0_{m,m}&Y_{m,m}&0_{m,1}\\
			0_{1,m}&M_{1,m}&\lambda
			\end{bmatrix}
			\end{align}
Where $\fourIdx{t}{}{}{}{X}\,Y=\lambda\, I$ and $\fourIdx{t}{}{}{}{T}\,Y=\fourIdx{t}{}{}{}{Y}\,T.$
\end{cor}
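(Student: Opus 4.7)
The plan is to apply the preceding classification theorem piece by piece and then concatenate the local bases into a single global symplectic-style basis. First I would decompose
\[
\mathcal{H}^m_\lambda = W(\lambda) \oplus \bigoplus_{i} W(\lambda_i)
\]
into generalized eigenspaces of $\alpha$, and group the $\lambda_i$'s according to which case of the theorem applies: pair $W(\lambda_i)$ with $W(\lambda/\lambda_i)$ in Case~(i); leave $W(\lambda_i)$ alone in Cases~(ii) and~(v); and merge the eigenspace for $\lambda_i=1$ with $W(\lambda)$ in Case~(iii). The hypothesis ``$\lambda \neq 1$ or $k = 1$'' is precisely what rules out Case~(iv): if $\lambda = 1$ with $k > 1$, Case~(iv) would place a nonzero $M$-block in position $(3,1)$ of the matrix of $\alpha$, which is forbidden by~\eqref{alpha}; while if $k = 1$, the restriction of $\alpha$ to $W(\lambda)$ is simply $\lambda I$ and the offending $M$-block vanishes automatically.

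For each group the classification theorem provides a Heisenberg Hom-Lie subalgebra (all sharing the same central line $\C z$) together with an explicit basis $(x^\bullet_1, \ldots, x^\bullet_p, y^\bullet_1, \ldots, y^\bullet_p, z)$ satisfying $[x^\bullet_i, y^\bullet_j] = \delta_{ij} z$ (and zero otherwise), and a restricted block-matrix for $\alpha$ of the form stated by the theorem. I would then concatenate all the $x^\bullet$'s to form $(x_1, \ldots, x_m)$, all the $y^\bullet$'s to form $(y_1, \ldots, y_m)$, and keep a single $z$. The remaining bracket relations $[x_i, y_j] = \delta_{ij} z$ for indices coming from distinct groups, together with $[x_i, x_j] = [y_i, y_j] = 0$, all reduce to showing that $[u,v] = 0$ whenever $u \in W(\mu)$ and $v \in W(\nu)$ with $\mu\nu \neq \lambda$; this is exactly the inductive argument already used in Case~(i) of the classification theorem, exploiting multiplicativity $\alpha([u,v]) = [\alpha u, \alpha v]$ to force the $z$-coefficient of $[u,v]$ to vanish.

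Finally, because each piece of the decomposition is $\alpha$-invariant, the matrix of $\alpha$ in the concatenated basis assembles in a block-diagonal fashion into the stated form \eqref{alpha}, with $X$ and $Y$ block-diagonal, $T$ supported only on the blocks coming from Cases~(ii) and~(v), and $M$ supported only on the block coming from Case~(iii). The identities $\fourIdx{t}{}{}{}{X}\,Y = \lambda I_m$ and $\fourIdx{t}{}{}{}{T}\,Y = \fourIdx{t}{}{}{}{Y}\,T$ then drop out immediately from the corresponding block-wise identities already guaranteed by the theorem. The step I expect to require the most care is the bookkeeping that verifies the different groups really do assemble into a single block-diagonal global matrix; in particular, that the Case~(iii) merging of the eigenspace for $\lambda_i = 1$ with $W(\lambda)$ does not interfere with the block structures coming from Cases~(i), (ii), and~(v).
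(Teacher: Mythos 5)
Your proposal is correct and follows exactly the route the paper intends: the corollary is stated without a separate proof, being the assembly of the case analysis in the preceding classification theorem, and your grouping of the generalized eigenspaces (pairing $W(\lambda_i)$ with $W(\lambda/\lambda_i)$, isolating the $\lambda_i^2=\lambda$ and $\lambda_i=1$ blocks, and invoking the multiplicativity argument $\alpha([u,v])=[\alpha(u),\alpha(v)]$ to kill cross-brackets when $\mu\nu\neq\lambda$) is precisely the mechanism of that theorem's proof. Your observation that the hypothesis ``$\lambda\neq 1$ or $k=1$'' serves to exclude the case where the $M$-block sits in position $(3,1)$ rather than $(3,2)$ is the right reading of why that hypothesis is needed.
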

\begin{remq}
	The previous corollary can be written differently :\\
	Any Heisenberg  Hom-Lie algebra $ \mathcal{H}^m_{\lambda} $ such that $ \lambda\neq 1 $ or $ k=1 $ contains a Lagrangian subspace $ E $ such that $\mathcal{H}^m_{\lambda}= E\oplus E^*\oplus <z> $ and $ \alpha(E)=E .$
\end{remq}
We
summarize the main facts in the theorem below.
\begin{thm} Let 
	Let $(\mathcal{H}^m_{\lambda},\ [\cdot,\cdot],\ \alpha)$  be an Heisenberg  Hom-Lie algebra over a field $ \C $ such that
$ \alpha(E)=E $	
	. Then, it is equivalent to one and only one on the Heisenberg  Hom-Lie algebra given  by one of the following extensions 
\begin{enumerate}
	\item \[0\longrightarrow (\C,\lambda)\longrightarrow (E\oplus E^*\oplus \C,[\cdot,\cdot],\alpha)\longrightarrow (E\oplus E^*,[\cdot,\cdot]_0,\gamma+\lambda\, ^{t}\gamma^{-1}) \longrightarrow 0 \]
	where $ [x+f+t,y+g]+t'=g(x)-f(y) .$ The matrix of $ \alpha $ relative to the bases $ (e_1,\cdots,e_m,e_1^*,\cdots,e_m^*,1) $ was of the form
\begin{equation}\label{ext}
 \begin{bmatrix}
X_{m,m}&0_{m,m}&0_{m,1}\\
0_{m,m}&Y_{m,m}&0_{m,1}\\
0_{1,m}&0_{1,m}&\lambda
\end{bmatrix}.
\end{equation}
Where $\fourIdx{t}{}{}{}{X_{m,m}}\,Y_{m,m}=\lambda\, I_{m}$.
		\item \[0\longrightarrow (\C,\lambda)\longrightarrow (E\oplus E^*\oplus \C,[\cdot,\cdot],\alpha)\longrightarrow (E\oplus E^*,[\cdot,\cdot]_0,\gamma+\lambda\, ^{t}\gamma^{-1}+\mu) \longrightarrow 0 \]
	where $ [x+f+t,y+g]+t'=g(x)-f(y) ,$  and $ \gamma\, ^{t}\mu=\mu\,^{t}\gamma $. The matrix of $ \alpha $ relative to the bases $ (e_1,\cdots,e_m,e_1^*,\cdots,e_m^*,1) $ was of the form
	$ \begin{bmatrix}
	X_{m,m}&T_{m,m}&0_{m,1}\\
	0_{m,m}&Y_{m,m}&0_{m,1}\\
	0_{1,m}&0_{1,m}&\lambda
	\end{bmatrix} $.Where $\fourIdx{t}{}{}{}{X_{m,m}}\,Y_{m,m}=\lambda\, I_{m}$ and
	$\fourIdx{t}{}{}{}{T_{m,m}}\,Y_{m,m}=\fourIdx{t}{}{}{}{Y_{m,m}}\,T_{m,m}.$ 
	\item \[0\longrightarrow (E^*\oplus\C,\lambda)\longrightarrow (E\oplus E^*\oplus \C,[\cdot,\cdot],\alpha)\longrightarrow (E,[\cdot,\cdot]_0,\gamma) \longrightarrow 0 \]
where $ [x+f+t,y+g]+t'=g(x)-f(y) .$ The matrix of $ \alpha $ relative to the bases $ (e_1,\cdots,e_m,e_1^*,\cdots,e_m^*,1) $ was of the form
$ \begin{bmatrix}
X_{m,m}&0_{m,m}&0_{m,1}\\
0_{m,m}&Y_{m,m}&0_{m,1}\\
0_{1,m}&M_{1,m}&\lambda
\end{bmatrix} $. Where $\fourIdx{t}{}{}{}{X_{m,m}}\,Y_{m,m}=\lambda\, I_{m}$.
\end{enumerate}	
\end{thm}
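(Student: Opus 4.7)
The plan is to start from the block form
$$P=\begin{bmatrix}X&T&0\\ 0&Y&0\\ 0&M&\lambda\end{bmatrix},\qquad {}^{t}XY=\lambda I,\quad {}^{t}TY={}^{t}YT,$$
given by Corollary \ref{hm}, and to normalize $T$ and $M$ by basis transformations that preserve both the bracket $[x_{i},y_{j}]=\delta_{ij}z$ and the invariance $\alpha(E)=E$. Two admissible families suffice. First, $x_{i}\mapsto Ax_{i}$, $y_{i}\mapsto A^{-t}y_{i}$, $z\mapsto z$ with $A\in\mathrm{GL}_{m}(\C)$ preserves the duality pairing and sends $(X,Y,T,M)$ to $(A^{-1}XA,\,A^{t}YA^{-t},\,A^{-1}TA^{-t},\,MA^{-t})$. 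Second, the shift $y_{i}\mapsto y_{i}+c_{i}z$ is bracket-preserving because $z$ is central; it leaves $X,Y,T$ unchanged and alters $M$ by an element of the image of $\lambda I-{}^{t}Y$.

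The dichotomy driving the classification is whether $\lambda\in\operatorname{Spec}(Y)$. If $\lambda\notin\operatorname{Spec}(Y)$, then $\lambda I-{}^{t}Y$ is invertible and $c$ can be chosen to kill $M$; depending on whether $T=0$ we land in Case~(1) or Case~(2). Under the identification $\gamma=X$, $\beta=Y$ (so that $\beta=\lambda\,{}^{t}\gamma^{-1}$ follows from ${}^{t}XY=\lambda I$) and $\mu=T$, the compatibility ${}^{t}TY={}^{t}YT$ becomes exactly the constraint $\gamma\,{}^{t}\mu=\mu\,{}^{t}\gamma$ stated in Case~(2).

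If instead $\lambda\in\operatorname{Spec}(Y)$, then $M$ is a genuine obstruction and we aim for Case~(3), which requires $T=0$. Here I would replace the Lagrangian $E$ by the graph $E'=\{x+S(x):x\in E\}$ of a linear map $S:E\to E^{*}$; such an $E'$ is again a Lagrangian complement to $E^{*}$ provided $S$ is symmetric in the bilinear-form sense, and the requirement $\alpha(E')=E'$ becomes a linear equation for $S$ whose inhomogeneous term is built from $T$. The condition ${}^{t}TY={}^{t}YT$ is precisely what makes this equation solvable, and after the substitution the new off-diagonal block in the matrix of $\alpha$ vanishes, producing Case~(3).

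To show that each algebra falls in exactly one of the three cases, I would use the dimension of the maximal $\alpha$-invariant abelian ideal containing $[\mathcal{H},\mathcal{H}]=\C z$ as the distinguishing invariant: this ideal equals $\C z$ in Cases~(1) and~(2) but contains $E^{*}\oplus\C z$ in Case~(3), and Cases~(1) and~(2) are then separated by the vanishing of the $\mu$-block of $\alpha$ on the quotient $E\oplus E^{*}$. The hard step will be the Lagrangian-change argument when $\lambda\in\operatorname{Spec}(Y)$: one must carry out a Jordan-block analysis of $Y$ at the eigenvalue $\lambda$, solve the equation for $S$ consistently with ${}^{t}TY={}^{t}YT$, and verify that after a concluding basis normalization the matrix of $\alpha$ takes exactly the shape claimed in the theorem.
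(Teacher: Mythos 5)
The paper offers no separate proof of this theorem: it is announced as a summary of the preceding classification theorem (proved by decomposing $\mathcal{H}^m_{\lambda}$ into the generalized eigenspaces $W(\lambda_i)$ of $\alpha$ and analysing Jordan blocks case by case according to the relations $\lambda_i\lambda_j=\lambda$, $\lambda_i^2=\lambda$, $\lambda_i=1$, $\lambda=1$) together with Corollary \ref{hm}. Your route is genuinely different: you take the normal form of Corollary \ref{hm} as given and try to normalize the blocks $T$ and $M$ by bracket-preserving changes of basis. Several ingredients are correct and cleaner than the paper's eigenvalue bookkeeping: the central shift $y_j\mapsto y_j+c_jz$ does change $M$ by $(\lambda I-{}^{t}Y)c$, so $M$ is removable precisely when $\lambda\notin\mathrm{Spec}(Y)$ (equivalently $1\notin\mathrm{Spec}(X)$, matching case (iii) of the paper's classification); and the dictionary $\gamma=X$, $Y=\lambda\,{}^{t}\gamma^{-1}$, $\mu=T$, turning ${}^{t}TY={}^{t}YT$ into $\gamma\,{}^{t}\mu=\mu\,{}^{t}\gamma$, is exactly right.

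The gap is in the step you yourself flag as hard, and it is not just unfinished but based on a wrong dichotomy. Replacing $E^*$ by the graph $y_j\mapsto y_j+\sum_kS_{kj}x_k$ (with $S$ symmetric so that the new complement is Lagrangian) changes $T$ into $T+XS-SY$, so killing $T$ amounts to solving the Sylvester equation $SY-XS=T$. That equation is obstructed exactly when $\mathrm{Spec}(X)\cap\mathrm{Spec}(Y)\neq\emptyset$, i.e.\ when $\lambda_i\lambda_j=\lambda$ for eigenvalues of $X$ --- a condition independent of your hypothesis $\lambda\in\mathrm{Spec}(Y)$, which only says $1\in\mathrm{Spec}(X)$. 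The compatibility ${}^{t}TY={}^{t}YT$ governs the symmetry of a solution $S$ when one exists; it does not make the equation solvable, and the paper's own Case 2 ($\lambda_i^2=\lambda$ with a nontrivial Jordan block) exhibits a $T$ that survives every such change. Consequently, when $1\in\mathrm{Spec}(X)$ and simultaneously some $\lambda_i\lambda_j=\lambda$, your argument produces neither form (2) (since $M$ cannot be removed) nor form (3) (since $T$ cannot be removed); any correct proof has to treat the two obstructions on the separate generalized eigenspaces where they live, as the paper's case analysis does. Finally, your separating invariant fails: $E^*\oplus\C z$ is an $\alpha$-invariant abelian ideal in case (1) as well as in case (3) (and case (1) is the common degeneration $T=M=0$ of both (2) and (3)), so it cannot certify the ``one and only one'' claim.
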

\begin{prop}
 Let   $ \mathcal{H} $ be a finite dimensional Hom-Lie algebra  with a $1$-dimensional  derived ideal. Suppose that
$[\mathcal{G}, \mathcal{G}] = <h>\subset Z(\mathcal{G}).$
Then, there exists an abelian Hom-Lie sub-algebra $ \mathfrak{a} $ such that \[\mathcal{H}=\mathcal{H}^m_{\lambda}  \oplus \mathfrak{a}.\] Where $\mathcal{H}^m_{\lambda} $ is
the Heisenberg Hom-Lie algebra defined in Corollary \ref{hm}.
\end{prop}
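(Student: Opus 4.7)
The plan is to code the bracket by a skew-symmetric bilinear form, peel off the radical of this form as the abelian summand, and identify the non-degenerate complement together with $\langle h\rangle$ as the Heisenberg factor.

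Since $[\mathcal{H},\mathcal{H}]=\langle h\rangle\subset Z(\mathcal{H})$, define $\mathcal{B}:\mathcal{H}\times\mathcal{H}\to\K$ by $[x,y]=\mathcal{B}(x,y)h$. This is bilinear and skew-symmetric. Because $h=[x_0,y_0]$ for some $x_0,y_0$, multiplicativity of $\alpha$ gives $\alpha(h)=[\alpha(x_0),\alpha(y_0)]\in\langle h\rangle$, so $\alpha(h)=\lambda h$ for some scalar $\lambda$. Applying $\alpha$ to $[x,y]=\mathcal{B}(x,y)h$ then yields the $\lambda$-symplectic identity
\[
\mathcal{B}(\alpha(x),\alpha(y))=\lambda\,\mathcal{B}(x,y)\qquad\forall x,y\in\mathcal{H}.
\]
Introduce the radical $\mathfrak{r}=\{x\in\mathcal{H}:\mathcal{B}(x,\mathcal{H})=0\}$. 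Clearly $h\in\mathfrak{r}$, and the displayed identity together with surjectivity of $\alpha$ (the Heisenberg Hom-Lie hypothesis yields regularity of $\alpha$, in particular $\lambda\neq 0$) implies $\alpha(\mathfrak{r})\subset\mathfrak{r}$.

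Next, choose an $\alpha$-invariant complement $\mathfrak{a}$ to $\langle h\rangle$ inside $\mathfrak{r}$. Over $\C$, decompose $\mathfrak{r}$ into generalized eigenspaces of $\alpha|_\mathfrak{r}$: let $W_\lambda$ be the one containing $h$. Pick a Jordan basis of $W_\lambda$ placing $h$ at the top of its block and take $\mathfrak{a}$ to be the span of the remaining Jordan vectors together with all other generalized eigenspaces. Likewise, pick an $\alpha$-invariant complement $W$ to $\mathfrak{r}$ in $\mathcal{H}$ (existence is routine, since $\alpha$ descends to an automorphism of $\mathcal{H}/\mathfrak{r}$). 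The restriction $\mathcal{B}|_{W\times W}$ is non-degenerate by construction of $\mathfrak{r}$, and $\alpha|_W$ is $\lambda$-symplectic; hence $\dim W=2m$ is even and, by the structural Theorem for Heisenberg Hom-Lie algebras already established, $\mathcal{H}^m_\lambda:=W\oplus\langle h\rangle$ with the restricted bracket and restricted $\alpha$ is an Heisenberg Hom-Lie algebra of the type classified in Corollary~\ref{hm}.

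It remains to check the decomposition. As vector spaces, $\mathcal{H}=W\oplus\mathfrak{r}=W\oplus\langle h\rangle\oplus\mathfrak{a}=\mathcal{H}^m_\lambda\oplus\mathfrak{a}$. Since $\mathfrak{a}\subset\mathfrak{r}$, we have $[\mathfrak{a},\mathcal{H}]=\mathcal{B}(\mathfrak{a},\mathcal{H})h=0$, so $\mathfrak{a}$ is abelian and commutes with $\mathcal{H}^m_\lambda$; combined with $\alpha(\mathfrak{a})\subset\mathfrak{a}$ this yields a direct sum of Hom-Lie algebras. The main technical obstacle is the existence of the $\alpha$-invariant complement to $\langle h\rangle$ in $\mathfrak{r}$: one must argue that $h$ can be taken at the top of its Jordan block in the generalized $\lambda$-eigenspace, rather than wedged in the middle, which is where the hypothesis $h\in Z(\mathcal{H})$ and the scalar $\alpha(h)=\lambda h$ do the real work.
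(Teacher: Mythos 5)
The paper states this proposition without any proof, so there is no argument of record to compare yours against; I can only assess your proposal on its own terms. Its skeleton is the natural one and is surely what the author intends: encode the bracket as $[x,y]=\mathcal{B}(x,y)h$, split off the radical $\mathfrak{r}$ of $\mathcal{B}$ as the abelian summand, and recognize a non-degenerate complement together with $\langle h\rangle$ as $\mathcal{H}^m_\lambda$. The bracket-level verifications you give (the identity $\mathcal{B}(\alpha(x),\alpha(y))=\lambda\mathcal{B}(x,y)$, $[\mathfrak{a},\mathcal{H}]=0$ because $\mathfrak{a}\subset\mathfrak{r}$, evenness of $\dim W$) are fine.

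The genuine gap is exactly the point you flag in your last sentence and then do not close: the existence of an $\alpha$-invariant complement of $\langle h\rangle$ inside $\mathfrak{r}$. You assert that $h\in Z(\mathcal{H})$ and $\alpha(h)=\lambda h$ force $h$ to sit at the top of its Jordan block, but nothing in the hypotheses controls how $\alpha$ acts on the radical, and the claim fails in general. Concretely, take $\mathcal{H}=\langle x,y,h,a\rangle$ with $[x,y]=h$ and all other brackets zero, and $\alpha(x)=x$, $\alpha(y)=\lambda y$, $\alpha(h)=\lambda h$, $\alpha(a)=\lambda a+h$. This is a multiplicative Hom-Lie algebra with $[\mathcal{H},\mathcal{H}]=\langle h\rangle\subset Z(\mathcal{H})$, yet $h=(\alpha-\lambda\,\mathrm{id})(a)$ lies in the image of $(\alpha-\lambda\,\mathrm{id})$ restricted to $\mathfrak{r}=\langle h,a\rangle$, so every complement $\langle a+ch\rangle$ of $\langle h\rangle$ in $\mathfrak{r}$ satisfies $\alpha(a+ch)=\lambda(a+ch)+h\notin\langle a+ch\rangle$: no $\alpha$-invariant $\mathfrak{a}$ exists, and the proposition itself fails for this algebra unless one drops $\alpha$-invariance from the meaning of ``abelian Hom-Lie sub-algebra'' and ``direct sum''. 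So either an extra hypothesis such as $h\notin\operatorname{im}\bigl((\alpha-\lambda\,\mathrm{id})|_{Z(\mathcal{H})}\bigr)$ (for instance, $\alpha$ semisimple on the centre) must be added, or the argument cannot be completed as written. Two smaller issues: the statement does not assume $\mathcal{H}$ multiplicative or $\alpha$ surjective, so $\alpha(h)=\lambda h$ with $\lambda\neq 0$ and the inclusion $\alpha(\mathfrak{r})\subset\mathfrak{r}$ (which you derive from surjectivity) also need justification; and the existence of an $\alpha$-invariant complement $W$ of $\mathfrak{r}$ is likewise not ``routine'' --- though there you only need $W\oplus\langle h\rangle$ to be $\alpha$-invariant, which is weaker and is consistent with the $M_{1,m}$ block permitted in \eqref{alpha}.
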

\subsection{Oscillator Hom-algebra}
The Oscillator algebra (see\cite{oscillator}) $ \mathcal{H} $ is spanned by
 $\{a_n : n \in \Z \}\cup \{\hbar\}$, where $\hbar$ is central
and $ [a_m,a_n]=m\delta_{m+n,0}h $.
The oscillator algebra has a triangular decomposition $ \mathcal{H}=n^-\oplus \mathfrak{h}\oplus n^+ $ where 
\[\displaystyle  n^-=\bigoplus_{k=1}^{+\infty}\C a_{-k}, \qquad 
\displaystyle  \mathfrak{h}=\C \hbar \oplus \C a_0,\qquad
 \displaystyle  n^+=\bigoplus_{k=1}^{+\infty}\C a_{k}. \]
 \begin{defn}
 	A oscillator algebra $ \mathcal{H} $ is called Hom-oscillator algebra if there exist a linear map $ \alpha: \mathcal{H}\to \mathcal{H}$ satisfying 
 	\[\alpha([x,y])=[\alpha(x),\alpha(y)] ,\qquad \forall x,y\in \mathcal{H} . \]
 \end{defn}

\begin{example}
	Let $ \mathcal{A}=\C_1[x_1,x_2,\cdots]$ be the vector space of homogeneous polynomials of degree $ 1 $ in $x_1,x_2,\cdots  $ and $ \mathcal{B}=\C_{1}[\frac{\partial}{\partial x_1},\frac{\partial}{\partial x_2},\cdots] $ be the vector space of homogeneous polynomials of degree $ 1 $ in
$	\frac{\partial}{\partial x_1},\frac{\partial}{\partial x_2},\cdots$. Define a skew-symmetric bilinear bracket operation $ [\cdot,\cdot] $ on $ \mathcal{A}\oplus  \mathcal{B}$ by 
\begin{align*}
[P+\frac{\partial}{\partial x_i},Q+\frac{\partial}{\partial x_j}]&=\frac{\partial Q}{\partial x_i} -  \frac{\partial P}{\partial x_j},\\
[P,id_{\mathcal{A}}]=[id_{\mathcal{A}}, P ] =[id_{\mathcal{A}},\frac{\partial}{\partial x_i}]&=[\frac{\partial}{\partial x_i},id_{\mathcal{A}}]=0,\quad \forall P,\,Q\in  \mathcal{A},\quad i,\, j\in \N^* . \end{align*}  
  Then, $ \mathcal{H} =\mathcal{A} \oplus \mathcal{B}  $ is 
  a 	oscillator algebra.\\   
 Denote by 
	\[\hbar=1 ,\quad a_0=id_{\mathcal{A}},\quad a_n=\frac{\partial}{\partial x_n},\quad a_{-n}=nx_n,\quad \forall n\in \N^*.\]
Let $ \gamma:  n^- \to  n^- $, $ \beta:  n^+ \to  n^+ $be two linear maps. 
Let $ \alpha $ be a linear transformation from $  \mathcal{H} $ into it self such that \[\alpha(a_{-n})=\gamma(a_{-n})\quad \text{ and } \quad  \alpha(a_{n})=\beta(a_{n}),\quad \forall n\in \N^*. \]
$ \mathcal{H} $ is a Hom-oscillator algebra only if only $ \gamma $ is bijective, \[\beta(a_n)=\lambda ^t\gamma^{-1}(a_n) =\lambda  \frac{\partial}{\partial x_n}\circ \gamma^{-1}, \, \forall n\in \N^*\]
and there exist $ \lambda\in\C^* $ such that $ \alpha(\hbar)=\lambda \hbar $.
\end{example}
\section{Derivation of Heisenberg Hom-Lie algebras}

Let   $(\mathcal{H}^m_{\lambda},\ [\cdot,\cdot],\ \alpha)$  be  Heisenberg  Hom-Lie algebra generated by a vector space $ V $. There is a basis of $\mathcal{H}^m  $ given by $ (x_1,\cdots,x_m, y_1,\cdots,y_m,z)$ such that \[ [x_{i},y_{j}]=\delta_{ij}z,\quad     [x_i,x_j]=[y_i,y_j]=[x_i,z]=[y_i,z]=0 \] and the corresponding  matrix is of form
\[P=\begin{bmatrix}
X_{m}&0&0\\
0&Y_{m}&0\\
0&0&\lambda
\end{bmatrix},\]
where $\fourIdx{t}{}{}{}{X_{m}}\,Y_{m}=\lambda\, I_{m}.$
Let $ \lambda_1,\cdots,\lambda_r $ be the distinct eigenvalues of $ X $
 corresponding to the multiplicities $m_1,\cdots,m_r  $.
\begin{prop}
	Let $ A\in End(V) $. Then $ A\in  \mathfrak{sp}_{k} (V)$ if and only if \[   A=S^{k}M=\begin{bmatrix}
	X_{m}^{k}&0_{m}\\
	0_{m}&\lambda^{k}\fourIdx{t}{}{}{}{X^{-k}_{m}}
	\end{bmatrix}\times\begin{bmatrix}
	U_{m}&W_{m}\\
	V_{m}&-\fourIdx{t}{}{}{}{U_{m}}
	\end{bmatrix} \]
	where the off-diagonal blocks $ 	V_{m} $ and $ W_{m} $ of $ M $ are symmetric.
\end{prop}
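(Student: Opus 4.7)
The plan is to translate the defining identity for $\mathfrak{sp}_k(V)$ into matrix form and then substitute $A = S^k M$ to reduce the problem to the classical characterization of the symplectic Lie algebra. Writing $w(u,v) = \fourIdx{t}{}{}{}{u}\,B\,v$ with $B = \begin{bmatrix} 0 & I_m \\ -I_m & 0\end{bmatrix}$, the condition $w(A(x), S^k(y)) + w(S^k(x), A(y)) = 0$ for all $x, y \in V$ is equivalent to the matrix identity $\fourIdx{t}{}{}{}{A}\,B\,S^k + \fourIdx{t}{}{}{}{S^k}\,B\,A = 0$.

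A short induction on $k$, based on the $\lambda$-symplectic identity $\fourIdx{t}{}{}{}{S}\,B\,S = \lambda B$ (which holds since $S = \mathrm{diag}(X_m, Y_m)$ with $\fourIdx{t}{}{}{}{X_m}\,Y_m = \lambda I_m$), shows that $\fourIdx{t}{}{}{}{S^k}\,B\,S^k = \lambda^k B$. Since $X_m$ is invertible, so is $S$, hence also $S^k$; setting $M := S^{-k} A$ gives $A = S^k M$, and substituting into the defining equation and using the relation just established reduces the condition to $\lambda^k(\fourIdx{t}{}{}{}{M}\,B + B\,M) = 0$. Because $\lambda \neq 0$, this says exactly that $M$ lies in the standard symplectic Lie algebra $\mathfrak{sp}(V)$.

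It only remains to translate $\fourIdx{t}{}{}{}{M}\,B + B\,M = 0$ into the claimed block shape. For $M = \begin{bmatrix} U_m & W_m \\ V_m & Z_m\end{bmatrix}$, a direct $2\times 2$ block multiplication gives that $V_m$ and $W_m$ must be symmetric and that $Z_m = -\fourIdx{t}{}{}{}{U_m}$, which is precisely the form asserted in the proposition. Noting finally that $S^k = \mathrm{diag}(X_m^k, \lambda^k\,\fourIdx{t}{}{}{}{X_m^{-k}})$ follows from $Y_m = \lambda\,\fourIdx{t}{}{}{}{X_m^{-1}}$, we recover the exact factorization displayed in the statement. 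Every step above is reversible, giving the ``if and only if.'' There is no real obstacle beyond careful bookkeeping of block-matrix transposes; the only substantive point is the propagation of the $\lambda$-symplectic identity from $S$ to $S^k$, which converts the twisted condition into the untwisted classical one.
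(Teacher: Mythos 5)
Your proof is correct and complete. There is nothing in the paper to compare it against: the proposition is stated there without proof. Your route --- writing the defining condition as $\fourIdx{t}{}{}{}{A}\,B\,S^{k}+\fourIdx{t}{}{}{}{S^{k}}\,B\,A=0$, propagating the $\lambda$-symplectic identity to $\fourIdx{t}{}{}{}{S^{k}}\,B\,S^{k}=\lambda^{k}B$, and factoring $A=S^{k}M$ to reduce everything to the classical condition $\fourIdx{t}{}{}{}{M}\,B+B\,M=0$ --- is the natural one, and it is consistent in style with the one matrix identity the paper does record nearby, namely $\mu B=\fourIdx{t}{}{}{}{D}\,B\,S^{k}+\fourIdx{t}{}{}{}{S^{k}}\,B\,D$ in the proof of the subsequent theorem on $\alpha^{k}$-derivations. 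The two substantive points are handled correctly: the block computation forcing $V_{m}$ and $W_{m}$ symmetric and the lower-right block equal to $-\fourIdx{t}{}{}{}{U_{m}}$, and the identification $Y_{m}=\lambda\,\fourIdx{t}{}{}{}{X_{m}^{-1}}$ (from $\fourIdx{t}{}{}{}{X_{m}}\,Y_{m}=\lambda I_{m}$), which yields $S^{k}=\mathrm{diag}\bigl(X_{m}^{k},\lambda^{k}\,\fourIdx{t}{}{}{}{X_{m}^{-k}}\bigr)$ and hence the displayed factorization. The invertibility of $S$ and the hypothesis $\lambda\neq 0$ used for reversibility are both guaranteed by the paper's standing assumptions (every multiplicative Heisenberg Hom-Lie algebra is regular, $\lambda\in\C^{*}$), so the ``if and only if'' goes through.
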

\begin{theorem}
Let $D\in End(\mathcal{H}^m_{\lambda})$. Then $D$ is a $ \alpha^k $-derivation if and only if
\[D=\begin{bmatrix}
D_1&D_3&0\\
D_2&D_4&0\\
U_{1,m}&V_{1,m}&\mu
\end{bmatrix},\]
where 
\begin{align*}
& D_1X=XD_1, \quad  
D_4=\mu  \, \fourIdx{t}{}{}{}{ X^{-k}}-
\lambda^k\, \fourIdx{t}{}{}{}{(D_1X^{-2k})} 
,\\
& \fourIdx{t}{}{}{}{X}\,D_2X=\lambda D_2,\quad
 \fourIdx{t}{}{}{}{X^k}\,D_2=\fourIdx{t}{}{}{}{D_2}\,X^k,\quad\\
& XD_3\fourIdx{t}{}{}{}{X}=\lambda D_3 \quad X^k\, \fourIdx{t}{}{}{}{D_3}=D_3\fourIdx{t}{}{}{}{X^k},
   \\
  & \fourIdx{t}{}{}{}{X}\, \fourIdx{t}{}{}{}{U_{1,m}}=\lambda \, \fourIdx{t}{}{}{}{U_{1,m}} \text{\quad  and\quad }
   X \, \fourIdx{t}{}{}{}{V_{1,m}}=\fourIdx{t}{}{}{}{V_{1,m}}.
\end{align*}
Moreover, for each $ D\in Der_{\alpha^{k}} $ there is an endomorphism $    A\in \mathfrak{sp}_{k} (V,\alpha_{/V})$ 
such that
\[ D=\begin{bmatrix}
aS+A&0\\
U_{1,2m}&2a
\end{bmatrix} \]

\end{theorem}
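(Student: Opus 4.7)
The plan is to expand $D$ as a block matrix with respect to the decomposition $\mathcal{H}^{m}_{\lambda}=E\oplus F\oplus \mathbb{C}z$, with $E=\langle x_{1},\dots,x_{m}\rangle$ and $F=\langle y_{1},\dots,y_{m}\rangle$, and then peel off two families of constraints: those coming from the commutation $\alpha\circ D=D\circ\alpha$, and those coming from the derivation identity $D([u,v])=[D(u),\alpha^{k}(v)]+[\alpha^{k}(u),D(v)]$ evaluated on the basis brackets. Writing
\[
D(x)=D_{1}(x)+D_{2}(x)+U(x)z,\quad D(y)=D_{3}(y)+D_{4}(y)+V(y)z,\quad D(z)=a_{0}+b_{0}+\mu z,
\]
with the obvious block conventions, the bilinear identity $[x,y]=\mathbf{x}^{t}\mathbf{y}\,z$ for $x\in E$, $y\in F$ will provide the computational backbone.

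First I would exploit $\alpha\circ D=D\circ\alpha$ applied to each of $x_{i}$, $y_{i}$, and $z$, then project the resulting identity onto the three summands $E$, $F$, $\mathbb{C}z$. This immediately gives the six linear conditions in the statement, namely $D_{1}X=XD_{1}$, $\fourIdx{t}{}{}{}{X}D_{2}X=\lambda D_{2}$, $XD_{3}\fourIdx{t}{}{}{}{X}=\lambda D_{3}$, $YD_{4}=D_{4}Y$, together with $\fourIdx{t}{}{}{}{X}\fourIdx{t}{}{}{}{U}=\lambda\fourIdx{t}{}{}{}{U}$ and $X\fourIdx{t}{}{}{}{V}=\fourIdx{t}{}{}{}{V}$. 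The remaining projection forces $\alpha(D(z))=\lambda D(z)$.

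Next I would apply the derivation identity to each basis bracket. The crucial case is $[x_{i},y_{j}]=\delta_{ij}z$: the right-hand side lies in $\mathbb{C}z$ for every choice of $i,j$, while the left-hand side carries the full vector $D(z)$ multiplied by $\delta_{ij}$. Comparing $E$- and $F$-components forces $a_{0}=b_{0}=0$, so $D(z)=\mu z$ (this produces the zero block in the third column). Equating the $z$-components yields the trace identity $D_{1}^{t}Y^{k}+(X^{k})^{t}D_{4}=\mu I$, which is exactly the relation $D_{4}=\mu\,\fourIdx{t}{}{}{}{X^{-k}}-\lambda^{k}\,\fourIdx{t}{}{}{}{(D_{1}X^{-2k})}$ after using $Y=\lambda\,\fourIdx{t}{}{}{}{X^{-1}}$ and the fact that $D_{1}$ already commutes with $X^{k}$. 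Applying the same identity to $[x_{i},x_{j}]=0$ and $[y_{i},y_{j}]=0$ produces the symmetry relations $(X^{k})^{t}D_{2}=D_{2}^{t}X^{k}$ and $X^{k}\fourIdx{t}{}{}{}{D_{3}}=D_{3}\fourIdx{t}{}{}{}{X^{k}}$; the brackets $[x_{i},z]$ and $[y_{i},z]$ give nothing new since $z$ is central and $D(z)=\mu z$. Combining everything establishes the first block form and its converse by retracing the steps.

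For the factored form I would set $a=\mu/2$ and define $A=\left[\begin{smallmatrix}D_{1}-aX&D_{3}\\ D_{2}&D_{4}-aY\end{smallmatrix}\right]$, then verify that $A\in\mathfrak{sp}_{k}(V,\alpha_{/V})$ by checking $A^{t}BS^{k}+(S^{k})^{t}BA=0$ block by block: the $(1,1)$ and $(2,2)$ blocks reduce to the two symmetry conditions already obtained, while the $(1,2)$ block becomes a linear combination of the trace identity and $X^{t}Y=\lambda I$, so it vanishes for the prescribed value of $a$. The representation $A=S^{k}M$ with $M$ of the classical symplectic form then follows from the proposition preceding the theorem. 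The main obstacle I expect is the bookkeeping in this last step — in particular, keeping track of the interaction between $X^{t}Y^{k}$, $(X^{k})^{t}Y$, and the trace identity, and making sure the scalar $a$ really is forced to equal $\mu/2$ rather than some other combination; this is where the precise hypotheses (including the fact that $X$ is $\lambda$-symplectic so that $X^{t}Y=\lambda I$) have to be used carefully.
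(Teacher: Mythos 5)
Your proposal follows essentially the same route as the paper: extract the commutation constraints from $\alpha\circ D=D\circ\alpha$, translate the derivation identity into $\mu \mathcal{B}(x,y)=\mathcal{B}(D(x),\alpha^{k}(y))+\mathcal{B}(\alpha^{k}(x),D(y))$, i.e. $\mu B=\fourIdx{t}{}{}{}{D}BS^{k}+\fourIdx{t}{}{}{}{(S^{k})}BD$, and then read off the block relations; the paper states exactly this and dismisses the rest as ``a matrix calculation,'' so your version is in fact the more complete one for the first equivalence. The one place where you should be careful is the ``moreover'' clause. With $a=\mu/2$ and $A=D_{V}-aS_{V}$, the condition $A\in\mathfrak{sp}_{k}(V,\alpha_{/V})$ requires $a\left(\fourIdx{t}{}{}{}{S}BS^{k}+\fourIdx{t}{}{}{}{(S^{k})}BS\right)=\mu B$, and a direct computation with $S_{V}=\mathrm{diag}(X,Y)$, $\fourIdx{t}{}{}{}{X}Y=\lambda I$ gives off-diagonal blocks $\lambda^{k}\,\fourIdx{t}{}{}{}{X^{1-k}}+\lambda\,\fourIdx{t}{}{}{}{X^{k-1}}$, which is not $2I$ in general; the clean splitting is rather $D_{V}=\tfrac{\mu}{2\lambda^{k}}S^{k}+A$, since $\fourIdx{t}{}{}{}{(S^{k})}BS^{k}=\lambda^{k}B$. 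So your flagged worry is justified: the scalar and the power of $S$ have to be adjusted (they coincide with the statement's $aS$, $2a$ only for $k=1$, $\lambda=1$), a point the paper's own two-line proof never addresses.
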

\begin{proof}
With \eqref{juillet17}, we obtain $D_1X=XD_1  $, $ \fourIdx{t}{}{}{}{X}\,D_2X=\lambda D_2, $\\	
$ \fourIdx{t}{}{}{}{X}\, \fourIdx{t}{}{}{}{U_{1,m}}=\lambda \, \fourIdx{t}{}{}{}{U_{1,m}}, $ $ XD_3\fourIdx{t}{}{}{}{X}=\lambda D_3 , $ $ \fourIdx{t}{}{}{}{X}\,D_4= D_4\, \fourIdx{t}{}{}{}{X},$
and $   X \, \fourIdx{t}{}{}{}{V_{1,m}}=\fourIdx{t}{}{}{}{V_{1,m}}. $\\
With \eqref{13juillet17}, we obtain 
$\mu\mathcal{B}(x,y)=\mathcal{B}\left( D(x),\alpha^{k}(y)\right) +\mathcal{B}\left( \alpha^{k}(x),D(y)\right)  .$\\
Then, $ \mu B=\fourIdx{t}{}{}{}{D}BS^k+\fourIdx{t}{}{}{}{S^k}BD. $
Thus, by a matrix calculation, one obtains the equalities remaining.
\end{proof}
\begin{prop}
	\begin{enumerate}[(i)]
		\item 	If $ \lambda_i \lambda_j\neq \lambda,\ \forall i,j\in\{1,\cdots,r\} $. Then, $ D_2=D_3=0. $ 
		\item  	If $ \lambda_i \neq \lambda,\ \forall i\in\{1,\cdots,r\} $. Then, $ U_{1m}=0. $ 
			\item  	If $ \lambda_i \neq 1,\ \forall i\in\{1,\cdots,r\} $. Then, $ V_{1m}=0. $ 
	\end{enumerate}
\end{prop}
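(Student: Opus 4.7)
The plan is to read each of the three conclusions directly from the list of constraint equations for $D$ to be an $\alpha^k$-derivation in the previous theorem, by interpreting each constraint as a spectral condition and then invoking the relevant hypothesis to exclude that spectral value.

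For parts (ii) and (iii) the argument is essentially immediate. The constraint $\fourIdx{t}{}{}{}{X}\,\fourIdx{t}{}{}{}{U_{1,m}}=\lambda\,\fourIdx{t}{}{}{}{U_{1,m}}$ says precisely that the column vector $\fourIdx{t}{}{}{}{U_{1,m}}$ is a $\lambda$-eigenvector of $\fourIdx{t}{}{}{}{X}$; since $\fourIdx{t}{}{}{}{X}$ and $X$ share the spectrum $\{\lambda_1,\dots,\lambda_r\}$, the hypothesis $\lambda\neq \lambda_i$ for every $i$ excludes $\lambda$ from that spectrum, so $\fourIdx{t}{}{}{}{U_{1,m}}=0$, i.e.\ $U_{1,m}=0$. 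Analogously, $X\,\fourIdx{t}{}{}{}{V_{1,m}}=\fourIdx{t}{}{}{}{V_{1,m}}$ states that $\fourIdx{t}{}{}{}{V_{1,m}}$ is a $1$-eigenvector of $X$; the assumption $\lambda_i\neq 1$ then forces $V_{1,m}=0$.

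For part (i) I would recast the two constraints as eigenvalue equations for Sylvester-type linear operators on $M_m(\C)$. Concretely, $\fourIdx{t}{}{}{}{X}\,D_2\,X=\lambda D_2$ says that $D_2$ is an eigenvector of $\Phi:M\mapsto \fourIdx{t}{}{}{}{X}MX$ for the eigenvalue $\lambda$, and $X D_3 \fourIdx{t}{}{}{}{X}=\lambda D_3$ says that $D_3$ is an eigenvector of $\Psi:M\mapsto XM\fourIdx{t}{}{}{}{X}$ for the eigenvalue $\lambda$. Using the Kronecker product identification $\mathrm{vec}(\Phi(M))=(\fourIdx{t}{}{}{}{X}\otimes \fourIdx{t}{}{}{}{X})\mathrm{vec}(M)$ (or, equivalently, triangularising $X$ in a Jordan basis and observing that $\Phi,\Psi$ become block upper triangular with diagonal blocks of the form $\lambda_i\lambda_j\,I$), the spectrum of $\Phi$ and of $\Psi$ is exactly the set of products $\{\lambda_i\lambda_j:1\leq i,j\leq r\}$. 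The hypothesis $\lambda_i\lambda_j\neq\lambda$ for all $i,j$ thus removes $\lambda$ from the spectrum of both $\Phi$ and $\Psi$, forcing $D_2=0$ and $D_3=0$.

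The only step that requires a little care is the spectral description of $\Phi$ and $\Psi$ in (i), where one must handle the generalized eigenspaces of $X$ cleanly; I expect no real obstacle, as the Jordan/Kronecker calculation is standard and the two auxiliary conditions $\fourIdx{t}{}{}{}{X^k}D_2=\fourIdx{t}{}{}{}{D_2}X^k$ and $X^k\,\fourIdx{t}{}{}{}{D_3}=D_3\,\fourIdx{t}{}{}{}{X^k}$ play no role in ruling out $D_2$ and $D_3$, so they may safely be ignored here.
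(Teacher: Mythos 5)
Your argument is correct and is essentially the only reasonable one: the paper states this proposition without proof, and your reading of the three constraints from the preceding theorem as eigenvalue conditions (for $\fourIdx{t}{}{}{}{X}$, for $X$, and for the Sylvester/Kronecker operators $M\mapsto \fourIdx{t}{}{}{}{X}MX$ and $M\mapsto XM\fourIdx{t}{}{}{}{X}$, whose spectra are $\{\lambda_i\lambda_j\}$) supplies exactly the missing justification. The observation that the auxiliary conditions $\fourIdx{t}{}{}{}{X^k}D_2=\fourIdx{t}{}{}{}{D_2}X^k$ and $X^k\,\fourIdx{t}{}{}{}{D_3}=D_3\,\fourIdx{t}{}{}{}{X^k}$ are not needed is also correct.
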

\begin{lem}
	   Let $\displaystyle \chi(x)=\prod_{i=1}^{r}(x-\lambda_i)^{m_i} $ be the characteristic polynomial of  $ X $. 
	   The eigenspace of $ X $ corresponding to $ \lambda_i $, is
	  denoted by $ E_{\lambda_i} $. Then \[ \displaystyle \dim C^{1}_{\alpha,\alpha}(E,E) =\sum_{k=1}^{r}m_{k}\dim E_{\lambda_k}.\]
\end{lem}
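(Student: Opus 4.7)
My plan is to translate the definition of $C^{1}_{\alpha,\alpha}(E,E)$ into a centralizer statement for $X$ inside $\mathrm{End}(E)$ and then compute the dimension from the spectral decomposition of $X$.

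First, I would observe that since $E=\langle x_{1},\dots,x_{m}\rangle$ is an abelian Hom-Lie subalgebra of $\mathcal{H}^{m}_{\lambda}$ (all brackets of basis elements of $E$ vanish) with $\alpha|_{E}$ represented by $X$, a $1$-cochain on $E$ with values in $E$ is simply a linear map $f\colon E\to E$, and the Hom-compatibility condition $\alpha\circ f=f\circ\alpha$ becomes $Xf=fX$. Hence
\[
C^{1}_{\alpha,\alpha}(E,E)\;=\;\{\,f\in\mathrm{End}(E)\mid fX=Xf\,\}\;=\;Z(X),
\]
so the lemma reduces to computing the dimension of the commutant of $X$ from its spectral data.

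Next, I would use the eigenspace decomposition. For $X$ diagonalizable one has $E=\bigoplus_{k=1}^{r}E_{\lambda_{k}}$ with $\dim E_{\lambda_{k}}=m_{k}$. Any $f\in Z(X)$ preserves each eigenspace, since for $v\in E_{\lambda_{k}}$ one has $X(f(v))=f(X(v))=\lambda_{k}f(v)$; conversely, any family of endomorphisms $f_{k}\in\mathrm{End}(E_{\lambda_{k}})$ assembles into a commuting endomorphism. This produces the canonical isomorphism $Z(X)\cong\bigoplus_{k=1}^{r}\mathrm{End}(E_{\lambda_{k}})$, from which a dimension count gives
\[
\dim Z(X)\;=\;\sum_{k=1}^{r}(\dim E_{\lambda_{k}})^{2}\;=\;\sum_{k=1}^{r}m_{k}\dim E_{\lambda_{k}},
\]
establishing the stated formula.

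The main technical obstacle is the non-diagonalizable case. As written, the statement refers to the geometric eigenspaces $E_{\lambda_{k}}$, so the identity $\dim E_{\lambda_{k}}=m_{k}$ (used to rewrite $(\dim E_{\lambda_{k}})^{2}$ as $m_{k}\dim E_{\lambda_{k}}$) holds only when the algebraic and geometric multiplicities coincide. In the general situation the correct decomposition uses the generalized eigenspaces $W(\lambda_{k})=\ker(X-\lambda_{k})^{m_{k}}$, and one must compute $\dim Z(X|_{W(\lambda_{k})})$ from the Jordan type of the nilpotent part $N_{k}=X|_{W(\lambda_{k})}-\lambda_{k}\mathrm{Id}$. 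I would therefore make the semisimplicity of $X$ explicit as the hypothesis of the lemma, or alternatively indicate that the equality above becomes an inequality in the general case and refine the count Jordan block by Jordan block.
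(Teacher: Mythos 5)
The paper states this lemma without proof, so there is no argument of the author's to compare yours against; your reconstruction --- identifying $C^{1}_{\alpha,\alpha}(E,E)$ with the commutant $Z(X)=\{f\in \mathrm{End}(E)\; :\; fX=Xf\}$ (legitimate here because $S$ is block diagonal, so $\alpha(E)=E$ with matrix $X$, and a $1$-cochain is just a linear map subject to $\alpha\circ f=f\circ\alpha$) and then decomposing along the eigenspaces of $X$ --- is the natural route, and your computation $\dim Z(X)=\sum_{k}(\dim E_{\lambda_{k}})^{2}=\sum_{k}m_{k}\dim E_{\lambda_{k}}$ is complete and correct when $X$ is diagonalizable.

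The caveat in your last paragraph deserves to be stated more forcefully: the lemma as written is actually false for general $X$, not merely in need of a refined count. The commutant of $X$ has dimension $\sum_{k}\sum_{i,j}\min\bigl(n^{(k)}_{i},n^{(k)}_{j}\bigr)$, where $n^{(k)}_{1},n^{(k)}_{2},\dots$ are the sizes of the Jordan blocks of $X$ at $\lambda_{k}$, and since $\min(n_i,n_j)\le n_j$ this is bounded above by $m_{k}\dim E_{\lambda_{k}}$ with equality exactly when all blocks at $\lambda_{k}$ have equal size. The smallest counterexample is a single eigenvalue with Jordan blocks of sizes $2$ and $1$: then $m_{1}=3$ and $\dim E_{\lambda_{1}}=2$, so the lemma predicts $6$, whereas a direct computation of the commutant gives $5$. (The formula does hold in the two regimes the paper actually uses later --- diagonalizable $X$, and a single Jordan block per eigenvalue as in the $3$-dimensional table --- which is presumably why the discrepancy went unnoticed.) So the right fix is the first option you mention: add the hypothesis that $X$ is semisimple, or more generally that the Jordan blocks at each eigenvalue are of equal size; otherwise the right-hand side must be replaced by the $\min$-formula above. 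With that hypothesis made explicit, your proof stands as a correct and complete proof of the lemma.
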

\begin{theorem}
Let $ I=\{\lambda_i;\, \chi(\frac{\lambda}{\lambda_i})=0 \} $. Then, 
\begin{enumerate}[(i)]
	\item If $ \chi(1)\neq0 $ and $ \chi(\lambda)\neq 0. $
	\[ \dim(Der_{\alpha^k})=card(I)+\sum_{i=1}^{r}m_{i}\dim E_{\lambda_i}+1 \]
	\item  If $ \chi(1)=0 $ and $ \chi(\lambda)\neq 0. $
	\[ \dim(Der_{\alpha^k})=card(I)+\sum_{i=1}^{r}m_{i}\dim E_{\lambda_i}+\dim(E_1)+ 1 \]
		\item  If $ \chi(1)\neq 0 $ and $ \chi(\lambda)= 0. $
	\[ \dim(Der_{\alpha^k})=card(I)+\sum_{i=1}^{r}m_{i}\dim E_{\lambda_i}+\dim(E_\lambda)+ 1 \]
			\item  If $ \chi(1)= 0 $ and $ \chi(\lambda)= 0. $
	\[ \dim(Der_{\alpha^k})=card(I)+\sum_{i=1}^{r}m_{i}\dim E_{\lambda_i}+\dim(E_1)+\dim(E_\lambda)+ 1 \]
\end{enumerate}	
\end{theorem}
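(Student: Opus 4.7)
The plan is to compute $\dim(Der_{\alpha^k})$ by counting the independent parameters in the block form
\[D=\begin{bmatrix} D_1 & D_3 & 0 \\ D_2 & D_4 & 0 \\ U_{1,m} & V_{1,m} & \mu \end{bmatrix}\]
supplied by the preceding theorem, dealing with the seven pieces $(D_1,D_2,D_3,D_4,U_{1,m},V_{1,m},\mu)$ separately and then summing their contributions under the constraints listed there.

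First I would dispose of the easy blocks. The scalar $\mu$ is arbitrary, contributing $1$. The block $D_1$ is constrained by $D_1X=XD_1$, which is exactly the condition that $D_1\in C^{1}_{\alpha,\alpha}(E,E)$; the preceding lemma then gives dimension $\sum_{i=1}^{r}m_i\dim E_{\lambda_i}$. The block $D_4$ is not free at all: the explicit formula $D_4=\mu\,{}^tX^{-k}-\lambda^k\,{}^t(D_1X^{-2k})$ expresses it in terms of $D_1$ and $\mu$ (and one checks that the compatibility $D_4\,{}^tX={}^tX\,D_4$ is then automatic), so it contributes $0$.

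Next I would analyse the rows $U_{1,m}$ and $V_{1,m}$. The condition ${}^tX\,{}^tU_{1,m}=\lambda\,{}^tU_{1,m}$ says that ${}^tU_{1,m}$ lies in the $\lambda$-eigenspace of ${}^tX$; its dimension equals the geometric multiplicity of $\lambda$ as an eigenvalue of $X$, namely $\dim E_\lambda$ when $\chi(\lambda)=0$ and $0$ otherwise (the latter case is exactly the earlier proposition which forces $U_{1,m}=0$ when $\lambda_i\neq\lambda$ for all $i$). Analogously, $X\,{}^tV_{1,m}={}^tV_{1,m}$ makes ${}^tV_{1,m}$ an element of $\ker(X-I)$, contributing $\dim E_1$ when $\chi(1)=0$ and $0$ otherwise.

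Finally I would treat the blocks $D_2$ and $D_3$ together. Passing to a Jordan basis of $X$, the equation ${}^tX\,D_2\,X=\lambda D_2$ forces $(D_2)_{ij}=0$ outside the block pairs with $\lambda_i\lambda_j=\lambda$, and the same holds for $D_3$ via $XD_3\,{}^tX=\lambda D_3$; the only surviving pairs correspond to eigenvalues $\lambda_i\in I$. The auxiliary symmetry conditions ${}^tX^k\,D_2={}^tD_2\,X^k$ and $X^k\,{}^tD_3=D_3\,{}^tX^k$ then link the $(i,j)$- and $(j,i)$-entries inside each surviving pair, and a direct bookkeeping of the remaining degrees of freedom, pooled over $D_2$ and $D_3$, yields exactly $\mathrm{card}(I)$. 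Summing the contributions $1+\sum m_i\dim E_{\lambda_i}+\mathrm{card}(I)$ and adding $\dim E_\lambda$ when $\chi(\lambda)=0$ and $\dim E_1$ when $\chi(1)=0$ gives the four cases. The main obstacle will be this last step: the collapsed situation $\lambda_i^{2}=\lambda$ (where $\lambda_i=\lambda/\lambda_i$ and the $D_2,D_3$ blocks live on the same Jordan piece) and the non-diagonalisable case both need careful handling inside each Jordan block to justify the clean count $\mathrm{card}(I)$ without over- or under-counting.
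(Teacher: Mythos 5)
Your overall strategy is the right one, and in fact it is the only argument available: the paper states this theorem with no proof at all (it is followed immediately by the table of three--dimensional examples), so the intended justification can only be the parameter count you describe, based on the block description of $Der_{\alpha^{k}}$, the proposition forcing $D_2=D_3=0$, $U_{1,m}=0$, $V_{1,m}=0$ in the non-resonant cases, and the lemma giving $\dim C^{1}_{\alpha,\alpha}(E,E)=\sum_i m_i\dim E_{\lambda_i}$ for the commutant block $D_1$. Your treatment of $\mu$, $D_1$, $D_4$, $U_{1,m}$ and $V_{1,m}$ is exactly what is needed and matches the four case distinctions.

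The genuine gap is the step you yourself flag: the claim that $D_2$ and $D_3$ pooled contribute exactly $\mathrm{card}(I)$. Carry out the bookkeeping for diagonalizable $X$ with simple eigenvalues. If $\lambda_i\neq\lambda/\lambda_i$ are both eigenvalues, the entries $(i,j)$ and $(j,i)$ of $D_2$ survive and the condition ${}^{t}X^{k}D_2={}^{t}D_2X^{k}$ ties them together, so the unordered pair $\{\lambda_i,\lambda/\lambda_i\}$ --- which accounts for \emph{two} elements of $I$ --- contributes one parameter to $D_2$ and one to $D_3$, i.e.\ two in total, consistent with $\mathrm{card}(I)$. But if $\lambda_i^{2}=\lambda$, the surviving entry of each of $D_2$ and $D_3$ is the diagonal one $(i,i)$, on which the symmetry condition is vacuous, so this single element of $I$ contributes $2$, not $1$. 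This is visible in the paper's own table: for $S=\mathrm{diag}(a,a,\lambda)$ with $a^{2}=\lambda$, $a\neq\lambda$, one has $\mathrm{card}(I)=1$, $\sum_i m_i\dim E_{\lambda_i}=1$, $\chi(1)\neq 0$, $\chi(\lambda)\neq 0$, so case (i) of the theorem predicts dimension $3$, whereas the table records dimension $4$ with both $d_2$ and $d_3$ free. Hence the "direct bookkeeping" you defer does not return $\mathrm{card}(I)$ in the resonant case $\lambda_i^{2}=\lambda$ (nor when eigenspaces have dimension greater than one, where the contribution is a sum of products $\dim E_{\lambda_i}\dim E_{\lambda/\lambda_i}$ rather than a cardinality); a complete proof must either restrict the hypotheses or correct the formula, and the same caveat applies to invoking the lemma's commutant count when $X$ is not diagonalizable.
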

In the following table, we give all Heisenberg Hom-Lie algebras algebras of dimension $3$, we determine the space of the derivations, their dimensions:\\

\begin{tabular}{|c|c|c|}
	\hline
	Matrix $ S $ & Derivation D& Dimension \\
	
	\hline

	$\begin{bmatrix}
	a&0&0\\
	0&\frac{\lambda}{a}&0\\
	0&0&\lambda
	\end{bmatrix},$
	where  $ a^2\neq \lambda$ and $a\neq \lambda $& 	$\begin{bmatrix}
	d_1&0&0\\
	0&d_4&0\\
	0&0&d_4a^k+d_1(\frac{\lambda}{a})^k
	\end{bmatrix},$ &$\dim(Der_{\alpha^k})= 2$\\

	\hline
	    $ \begin{bmatrix}
		a&0&0\\
		0&a&0\\
		0&0&\lambda
	\end{bmatrix},$
	where  $ a^2= \lambda$ and $a\neq \lambda $  &
	 $\begin{bmatrix}
	 d_1&d_3&0\\
	 d_2&d_4&0\\
	 0&0&a^k(d_4+d_1)
	 \end{bmatrix},$
	 &$\dim(Der_{\alpha^k})= 4$  \\
	\hline
		\hline
	$ \begin{bmatrix}
	1&0&0\\
	0&1&0\\
	0&0&1
	\end{bmatrix},$
  &
	$\begin{bmatrix}
	d_1&-d_2&0\\
	d_2&d_1&0\\
	0&0&2d_1
	\end{bmatrix},$
	&$\dim(Der_{\alpha^k})= 2$  \\
	\hline
	$ \begin{bmatrix}
	a&1&0\\
	0&a&0\\
	0&0&\lambda
	\end{bmatrix},$ where  $ a^2=\lambda $ and $ a\neq 1 $
	&
	$\begin{bmatrix}
	d_1&d_3&0\\
	0&d_1&0\\
	0&0&2a^4d_1
	\end{bmatrix},$
	&$\dim(Der_{\alpha^k})= 2$  \\
	\hline
$ \begin{bmatrix}
1&1&0\\
0&1&0\\
0&0&1
\end{bmatrix},$  
&
$\begin{bmatrix}
d_1&d_3&0\\
d_2&d_4&0\\
u&v&d_1+d_4-kd_2
\end{bmatrix},$
&$\dim(Der_{\alpha^k})= 6$  \\
\hline
\end{tabular}
\subsection{The Hom-Lie algebra $  \mathcal{H}(D) $}
In this subsection we will present some results similar to those contained in \cite{super}.
Given a $ \alpha $-derivation $D$ of a Hom-Lie algebra $ \mathcal{G} $, we may consider the vector space $  \mathcal{G}(D)=\mathcal{G}\oplus \C D$ and endow it with the Hom-Lie algebra structure defined by \[[x+a D,y+b D]_D=[x,y]+a D(y)-b D(x)\] 
 and $\gamma_D(x,a D)=(\alpha(x),a D)$ for all $x,\ y\in \mathcal{G}, a, b \in \C$.(see \cite{shengR}).\\
If $ \mathcal{G} $ is nilpotent, the Hom-Lie algebra $  \mathcal{G}(D) $ is nilpotent if and only if $D$ is a nilpotent derivation. Then, if  $  \mathcal{H}_{\lambda}^{m}(D) $ is nilpotent, $ D(z)=0. $
 \begin{theorem}
 Let $ D,\, D' $ be two $ \alpha $-derivations in $ \mathcal{H}^m_{\lambda} $. The   Hom-Lie algebras   $ \mathcal{H}^m_{\lambda}(D) $   and  $ \mathcal{H}^m_{\lambda}(D') $ are isomorphic if, and only if   $ D'=\frac{1}{a}\left(\varphi\circ D\circ\varphi^{-1}-ad_{v}\right)$ where $ a\in\C^*$, $ \varphi\in Aut(\mathcal{H}^m_{\lambda}) $ and $v\in \mathcal{H}^m_{\lambda}$. 
 \end{theorem}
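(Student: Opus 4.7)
My plan is to prove the two implications separately. For sufficiency, I would construct an explicit isomorphism $\Psi : \mathcal{H}^m_{\lambda}(D) \to \mathcal{H}^m_{\lambda}(D')$ out of the data $(a,\varphi,v)$. For necessity, I would start from an arbitrary isomorphism and extract from it an automorphism of $\mathcal{H}^m_{\lambda}$, a scalar, and an element of $\mathcal{H}^m_{\lambda}$, from which the formula for $D'$ can be read off.

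\textbf{Sufficiency.} Assume $D' = \frac{1}{a}\bigl(\varphi \circ D \circ \varphi^{-1} - ad_v\bigr)$ with $a \in \C^{*}$, $\varphi \in \mathrm{Aut}(\mathcal{H}^m_{\lambda})$ and $v \in \mathcal{H}^m_{\lambda}$. Define
\[ \Psi(x + bD) = \varphi(x) + b v + b a\, D', \qquad x \in \mathcal{H}^m_{\lambda},\ b \in \C. \]
Since $a \neq 0$ and $\varphi$ is bijective, $\Psi$ is a linear isomorphism. Bracket compatibility splits into two cases. For $x,y \in \mathcal{H}^m_{\lambda}$, $\Psi([x,y]_D) = \varphi([x,y]) = [\varphi(x),\varphi(y)]$ because $\varphi$ is a Hom-Lie automorphism, matching $[\Psi(x),\Psi(y)]_{D'}$ once one checks the $D'$-contributions cancel. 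For the mixed bracket $[x,D]_D = -D(x)$, I would expand $[\Psi(x),\Psi(D)]_{D'} = [\varphi(x),v] - a D'(\varphi(x))$ and substitute the hypothesis $a D' = \varphi \circ D \circ \varphi^{-1} - ad_v$ to recover $-\varphi(D(x))$, which equals $\Psi(-D(x))$. Compatibility with the twists $\gamma_D$ and $\gamma_{D'}$ follows from $\varphi\circ\alpha = \alpha\circ\varphi$ (and may enforce the implicit normalisation $\alpha(v)=v$).

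\textbf{Necessity.} Suppose $\Psi : \mathcal{H}^m_{\lambda}(D) \to \mathcal{H}^m_{\lambda}(D')$ is a Hom-Lie isomorphism. The crucial step is to establish that $\Psi(\mathcal{H}^m_{\lambda}) = \mathcal{H}^m_{\lambda}$. Granted this, set $\varphi := \Psi\big|_{\mathcal{H}^m_{\lambda}}$ and write $\Psi(D) = v + a D'$; bijectivity forces $a \in \C^{*}$. Applying $\Psi$ to $[x,D]_D = -D(x)$ and expanding in $\mathcal{H}^m_{\lambda}(D')$ yields
\[ -\varphi(D(x)) \;=\; [\varphi(x), v] - a D'(\varphi(x)), \]
which rearranges to $a D'(\varphi(x)) = \varphi(D(x)) - ad_v(\varphi(x))$, that is, $D' = \tfrac{1}{a}\bigl(\varphi \circ D \circ \varphi^{-1} - ad_v\bigr)$.

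The main obstacle is justifying the invariance $\Psi(\mathcal{H}^m_{\lambda}) = \mathcal{H}^m_{\lambda}$, because a priori one only has $\Psi(x) = \psi(x) + f(x) D'$ for $x \in \mathcal{H}^m_{\lambda}$ with a possibly non-zero linear functional $f$. When $D$ acts non-nilpotently on $\mathcal{H}^m_{\lambda}$, the extension $\mathcal{H}^m_{\lambda}(D)$ is not nilpotent and $\mathcal{H}^m_{\lambda}$ is its nilradical, so $\Psi$ must preserve it. In the nilpotent case, one instead argues from the ascending central series together with the fact that $\mathcal{H}^m_{\lambda}$ is the unique codimension-one ideal containing $[\mathcal{H}^m_{\lambda}(D),\mathcal{H}^m_{\lambda}(D)] + Z(\mathcal{H}^m_{\lambda}(D))$. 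If no direct canonical characterisation is available, one post-composes $\Psi$ with a suitably chosen inner automorphism of $\mathcal{H}^m_{\lambda}(D')$ to kill $f$; the scalar $a$ and the vector $v$ then absorb the resulting correction.
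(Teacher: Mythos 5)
The paper states this theorem with no proof at all, so there is nothing of the author's to compare your argument against; I can only assess it on its own terms. Your sufficiency direction is essentially complete: the map $\Psi(x+bD)=\varphi(x)+bv+ba\,D'$ does intertwine the two brackets, and your remark that compatibility with the twists $\gamma_D,\gamma_{D'}$ forces the implicit normalisation $\alpha(v)=v$ is a correct and worthwhile observation (the same condition falls out automatically in the converse direction by applying $\Psi\circ\gamma_D=\gamma_{D'}\circ\Psi$ to $D$). The one genuine gap is in necessity, precisely where you locate it, but the repairs you sketch do not hold up. The assertion that $\Psi$ ``must preserve'' $\mathcal{H}^m_{\lambda}$ is false in general, and the uniqueness claim you invoke in the nilpotent case fails: every bracket of $\mathcal{H}^m_{\lambda}(D)$ lands in $\langle z\rangle+\mathrm{Im}\,D$, so when $D(\mathcal{H}^m_{\lambda})\subseteq\langle z\rangle$ (e.g.\ $D=D'=0$, $\alpha=\mathrm{id}$) every hyperplane containing $z$ is an ideal, $\mathcal{H}^m_{\lambda}$ is far from the unique codimension-one ideal containing the derived subalgebra and the centre, and the map sending $x_1\mapsto x_1+D'$ and fixing the rest of the basis is an isomorphism that does not preserve $\mathcal{H}^m_{\lambda}$.

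The remaining fallback, post-composing with an ``inner automorphism'' to kill the functional $f$ in $\Psi(x)=\psi(x)+f(x)D'$, is the right instinct from the classical Lie case, but in the Hom-Lie setting you have neither constructed such an automorphism (one would need something like $\exp(ad_w)$ to commute with $\gamma_{D'}$, which is not automatic) nor verified that the correction it introduces is absorbed into $a$ and $v$ as claimed. Note also that if $f\neq 0$ then $\psi$ itself is not a Hom-Lie morphism, since $\psi([x,y])=[\psi(x),\psi(y)]+f(x)D'(\psi(y))-f(y)D'(\psi(x))$, so one cannot simply take $\varphi=\psi$; one must first prove $f=0$ (after adjustment) or extract $(a,\varphi,v)$ by a different route. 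Until that step is supplied, the necessity direction is not proved.
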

\begin{thm}
Let $b $ be a symmetric, non-degenerate,  bilinear form. $ b $ 
is invariant if and only if  $  \ker(D)=\{z\}. $
 In this case, $ b:\mathcal{H}^m_{\lambda}(D)\times\mathcal{H}^m_{\lambda}(D) \to \C$  is given by \[b= \begin{bmatrix}
 	\gamma_{1}(\frac{1}{a}\fourIdx{t}{}{}{}{S^{-1}}+\fourIdx{t}{}{}{}{A^{-1}})w&0_{2m,1}&0_{2m,1}\\
 	0_{1,2m}&0_{1,2m}&\gamma_{1}\\
 	0_{1,2m}& \gamma_{1}&\gamma_{2}
 \end{bmatrix}.\]
 Where $ \gamma_{1}=b(z,D) $ and  $ \gamma_{2}=b(D,D) $.
\end{thm}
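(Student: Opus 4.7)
The plan is to exploit the invariance condition $b([u,v]_D, w) = b(u, [v,w]_D)$ systematically on the basis $(x_1,\dots,x_m,y_1,\dots,y_m,z,D)$ of $\mathcal{H}^m_\lambda(D)$, using the explicit bracket $[u+aD,v+bD]_D = [u,v]+aD(v)-bD(u)$, and then to read off the matrix of $b$.

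For triples entirely inside $\mathcal{H}^m_\lambda$, the image of the bracket lies in $\mathbb{C}z$ and $z$ is central, so plugging $u=x_i$, $v=y_i$ and letting $w$ run through the basis forces $b(z,w)=b(x_i,[y_i,w])$, which vanishes on every basis element of $\mathcal{H}^m_\lambda$; by non-degeneracy, the scalar $\gamma_1 := b(z,D)$ must be nonzero. For one-$D$ triples, setting $u=D$ with $v,w\in \mathcal{H}^m_\lambda$ and using $[D,v]_D=D(v)$ together with $[v,w]=\mathcal{B}(v,w)z$ yields
\[
b(D(v),w)=\gamma_1\,\mathcal{B}(v,w),\qquad v,w\in V=\langle x_1,\dots,y_m\rangle.
\]
Writing $M=aS+A$ for $D|_V$ as in the preceding derivation theorem, $w$ for the matrix of $\mathcal{B}|_V$ and $B$ for the matrix of $b|_V$, this is exactly ${}^tM\,B=\gamma_1 w$. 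Since $\mathcal{B}|_V$ is non-degenerate, non-degeneracy of $b|_V$ is equivalent to invertibility of $M$, and as $D(\mathbb{C}z)\subset \mathbb{C}z$, this invertibility is precisely $\ker D = \mathbb{C}z$, giving the \textit{if and only if}. Triples with more than one $D$ contribute no new relations on $b|_{\mathcal{H}^m_\lambda}$, only fixing the antidiagonal $\gamma_1$ entries via $b(z,D)=\gamma_1$ and leaving $\gamma_2 := b(D,D)$ a free parameter.

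For the formula, assuming $\ker D = \mathbb{C}z$ I would solve $B=\gamma_1({}^tM)^{-1}w$ and simplify using the $\lambda$-symplectic identity ${}^tS\,w\,S=\lambda w$ for $S$ and the defining relation ${}^tA\,w\,S^k+{}^tS^k\,w\,A=0$ of $A\in\mathfrak{sp}_k(V,S)$ to recognise $({}^tM)^{-1}w$ as $(\tfrac{1}{a}\,{}^tS^{-1}+{}^tA^{-1})w$. Symmetry of $B$ would then follow from combining these two identities with $w^t=-w$; a clean way is to verify directly that the candidate matrix $\gamma_1(\tfrac{1}{a}\,{}^tS^{-1}+{}^tA^{-1})w$ satisfies ${}^tM\,B=\gamma_1 w$, reducing the symmetry check to an algebraic identity in $S$, $A$ and $w$. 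Assembling the blocks gives the displayed matrix of $b$: the top-left $2m\times 2m$ block as stated, the mixed $(z,D)$-entries equal to $\gamma_1$, and the $(D,D)$-entry equal to the free $\gamma_2$.

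The main obstacle is this last step — the algebraic manipulation that turns $(aS+A)^{-t}w$ into the closed form $\gamma_1(\tfrac{1}{a}\,{}^tS^{-1}+{}^tA^{-1})w$ while simultaneously forcing symmetry of $B$. Both the $\lambda$-symplectic property of $S$ and the $k$-symplectic property of $A$ must be used together, and one must track carefully how $a$, the scalar part of $D$, interacts with the symplectic scaling factor $\lambda$. Once that identity is established the rest of the proof is bookkeeping on the matrix of $b$ in the chosen basis.
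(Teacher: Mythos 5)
The paper states this theorem with no proof at all, so there is nothing of the author's to compare your argument against; judged on its own merits, your proposal has the right skeleton (extract constraints from $b([u,v]_D,w)=b(u,[v,w]_D)$ on the basis and read off the matrix) but defers or gets wrong every step that carries the actual content. The parts you do carry out are fine: $b(z,w)=0$ for all $w\in\mathcal{H}^m_{\lambda}$, hence $\gamma_{1}=b(z,D)\neq 0$ by non-degeneracy, and the relation ${}^{t}M\,B=\gamma_{1}w$ with $M=D|_{V}$, $B$ the matrix of $b|_{V}$ and $w$ the matrix of $\mathcal{B}|_{V}$.

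The genuine gaps are three. First, the closed form: $\bigl({}^{t}(aS+A)\bigr)^{-1}$ is not $\frac{1}{a}{}^{t}S^{-1}+{}^{t}A^{-1}$ in general, since $(X+Y)^{-1}\neq X^{-1}+Y^{-1}$, and you exhibit no identity coming from the $\lambda$-symplectic property of $S$ or the $k$-symplectic property of $A$ that would repair this; you yourself name this "the main obstacle" and then do not overcome it, so neither the displayed matrix of $b$ nor its symmetry is ever derived. Second, the equivalence with $\ker D=\mathbb{C}z$: applying invariance to the triples $(D,z,w)$ and $(D,z,D)$ forces $b(D(z),\cdot)\equiv 0$ and hence $D(z)=0$ by non-degeneracy; this gives the inclusion $\mathbb{C}z\subseteq\ker D$ and is absent from your argument. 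Your sentence "as $D(\mathbb{C}z)\subset\mathbb{C}z$, this invertibility is precisely $\ker D=\mathbb{C}z$" is a non sequitur: $D(z)=\mu z$ with $\mu\neq 0$ together with $M$ invertible gives $\ker D=\{0\}$, not $\mathbb{C}z$. (Note also that $D(z)=0$ forces the scalar part $a$ in the decomposition $D|_{V}=aS+A$ of the preceding derivation theorem to vanish, which makes the factor $\frac{1}{a}$ in the asserted formula suspect and should have been confronted.) Third, the claim that triples containing more than one $D$ "contribute no new relations" is false: the triple $(D,v,D)$ with $v\in V$ yields $b(D(v),D)=0$, which, given invertibility of $M$, determines $b(u,D)=-\gamma_{1}\,U M^{-1}u$ in terms of the row $U_{1,2m}$ of $D$; this is exactly the computation that must be done to justify (or refute) the $0_{2m,1}$ blocks in the stated matrix, and you never perform it.
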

\subsection{The meta Heisenberg Hom algebra }
Let $ I_{k} $,  $ I^{*}_{k} $ be the linear transformations of $ \mathcal{H}^m_{\lambda} $ by setting 
\begin{align*}
	I_{k}(v+f+x)&=\alpha^{k}(v)+\lambda^{k}x\\
		I_{k}^{*}(v+f+x)&=\alpha^{k}(f)+\lambda^{k}x
\end{align*} 
$ \forall v\in E,\, f\in E^{*}, \, x\in \C.$ Then $ I_{k},  \, I^{*}_{k}\in  Der_{\alpha^{k}}(\mathcal{H}^m_{\lambda})$.\\
Let $ m_{k}(E)=\mathcal{H}^m_{\lambda}\oplus \C I_{k}\oplus\C  	I_{k}^{*}.$ Then $ m_{k}(E) $ is a Hom-Lie algebra which is a extension of abelian  Hom-Lie algebra $\C I_{k}\oplus\C  	I_{k}^{*}  $ by the Heisenberg algebra  $ \mathcal{H}^m_{\lambda} $. We call it  meta Heisenberg Hom  algebra generated by $ E $ (for the meta Heisenberg algebra see \cite{Meta}).
\begin{prop}
	A bilinear  form $ b:m_{k}(E)\times m_{k}(E) \to \C   $ is invariant  if and only if $ b $ is trivial.
\end{prop}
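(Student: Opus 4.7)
The plan is to use the invariance identity $b([u,v],w)=b(u,[v,w])$ with the derivations $I_k, I_k^*$ as levers, in order to force $b$ to vanish on every pair meeting the Heisenberg ideal. From the extension recipe $[x+aD,y+bD]_D=[x,y]+aD(y)-bD(x)$ together with $[I_k,I_k^*]=0$ one reads off the bracket table: $[I_k,v]=\alpha^k(v)$, $[I_k,f]=0$, $[I_k^*,v]=0$, $[I_k^*,f]=\alpha^k(f)$ for $v\in E, f\in E^*$, and $[I_k,z]=[I_k^*,z]=\lambda^k z$. Crucially $\lambda\neq 0$ by regularity of multiplicative Heisenberg Hom-Lie algebras, and $\alpha^k|_E$ is a bijection by the block structure of Corollary \ref{hm}.

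The first reduction kills the centre. Since $z$ is central and $[I_k,z]=\lambda^k z$,
\[\lambda^k b(z,w)=b([I_k,z],w)=b(I_k,[z,w])=0\]
for every $w\in m_k(E)$, giving $b(z,\cdot)\equiv 0$. Next, for $v,v'\in E$, $b(\alpha^k(v),v')=b([I_k,v],v')=b(I_k,[v,v'])=0$, and bijectivity of $\alpha^k|_E$ yields $b(E,E)=0$. The mixed piece is similar: for $v\in E, f\in E^*$ one has $b(\alpha^k(v),f)=b(I_k,[v,f])=\mathcal{B}(v,f)b(I_k,z)=0$, hence $b(E,E^*)=0$. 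For $b(E^*,E^*)$, the analogous recipe with $I_k^*$ gives $b(\alpha^k(f),g)=0$ for $f,g\in E^*$; decomposing $\alpha^k(f)\in E\oplus E^*\oplus\C z$, the $E$ and $\C z$ components are already annihilated by $b$, while the projection of $\alpha^k$ onto $E^*$ is surjective thanks to invertibility of the lower-right block of the matrix of $\alpha$, so $b(E^*,E^*)=0$.

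The cross entries $b(\mathcal{H}^m_\lambda,I_k)$ and $b(\mathcal{H}^m_\lambda,I_k^*)$ come next. Using symmetry of $b$,
\[b(I_k,\alpha^k(v))=b([I_k,v],I_k)=b(I_k,[v,I_k])=-b(I_k,\alpha^k(v)),\]
which forces $b(I_k,E)=0$; the identity $b([I_k^*,f],I_k)=b(I_k^*,[f,I_k])=0$ handles $b(I_k,E^*)$, and the $I_k^*$-analogues settle $b(E,I_k^*)$ and $b(E^*,I_k^*)$. At this stage $b$ has been reduced to its block on $(\C I_k\oplus\C I_k^*)^{\times 2}$.

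The main obstacle is precisely this residual block. Because $\C I_k\oplus\C I_k^*$ is abelian and neither generator can be written as a bracket, plain invariance imposes no further equations on $b(I_k,I_k)$, $b(I_k,I_k^*)$, $b(I_k^*,I_k^*)$. I expect the proof to close this gap by promoting \emph{invariant} to the $\gamma$-invariant sense of Definition 1.4(iv) with $\gamma=\alpha$, applied to triples such as $(I_k,z,I_k^*)$: the eigenvalue relations $\alpha(z)=\lambda z$ and $\alpha(I_k)=I_k$ transfer the already-established vanishing of $b(z,\cdot)$ into the upper corner, pushing all three remaining scalars to zero. This last step is where the real content lies; everything preceding it is essentially bookkeeping once the bracket table is laid down.
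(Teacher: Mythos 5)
The paper states this proposition without any proof, so there is nothing to compare your argument against; it has to stand on its own, and it does not quite do so. Your reduction on the Heisenberg part is essentially sound: once the bracket table $[I_k,v]=\alpha^k(v)$, $[I_k^*,f]=\alpha^k(f)$, $[I_k,z]=[I_k^*,z]=\lambda^k z$, $[I_k,I_k^*]=0$ is in place, invariance does force $b$ to vanish whenever one argument lies in $\mathcal{H}^m_{\lambda}$. Two small repairs are needed there: your first display $b([I_k,z],w)=b(I_k,[z,w])=0$ requires $[z,w]=0$, which fails for $w=I_k,I_k^*$ (where $[z,I_k]=-\lambda^k z$), so those cases must be handled separately, e.g.\ via $b(z,I_k)=b([x_1,y_1],I_k)=b(x_1,[y_1,I_k])=0$; and you invoke symmetry of $b$, which is not among the hypotheses of the proposition, though the same conclusions can be reached without it (e.g.\ $b(I_k,\alpha^k(v))=b(I_k,[I_k,v])=b([I_k,I_k],v)=0$).

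The genuine gap is exactly the one you flag and then fail to close. Since $[m_k(E),m_k(E)]=\mathcal{H}^m_{\lambda}$ and $\gamma_D$ both preserves this ideal and fixes $I_k$ and $I_k^*$, every instance of the invariance identity --- whether in the plain form $b([x,y],w)=b(x,[y,w])$ or in the $\gamma$-invariant form $b([x,y],\gamma(w))=b(\gamma(x),[y,w])$ --- evaluates $b$ with at least one argument inside $\mathcal{H}^m_{\lambda}$. Hence no choice of triple, including your suggested $(I_k,z,I_k^*)$, ever produces an equation containing $b(I_k,I_k)$, $b(I_k,I_k^*)$ or $b(I_k^*,I_k^*)$: your proposed computation only reproves $b(z,I_k^*)=-b(I_k,z)=0$. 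In fact the bilinear form defined by $b(u,v)=\pi(u)\pi(v)$, where $\pi$ extracts the $I_k$-coordinate, is symmetric, invariant and nonzero, so the residual $2\times 2$ block cannot be killed by any invariance argument. Your proof therefore establishes only that an invariant form vanishes on $\mathcal{H}^m_{\lambda}\times m_k(E)$ and $m_k(E)\times\mathcal{H}^m_{\lambda}$; the "only if" direction of the proposition as literally stated is out of reach of this method, and the statement itself appears to need either an additional hypothesis or a weaker reading of "trivial."
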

\begin{prop}
	Let $(s,[\cdot,\cdot]_{s},id_s)  $ be a semisimple Hom-Lie algebra. $ E $ be a Hom-$ s $-module. The following results hold.
	\begin{enumerate}[(i)]
		\item The  meta Heisenberg Hom  algebra generated by $ E $ can become a Hom-$ s $-module satisfying 
		\begin{align*}
		a(e+f+xc+yI+zI^*)&=ae+af   \qquad \forall a\in s,\, e\in E,\, f\in E^*,\, x,\, y,\, z\in\C
		\end{align*}
		where $ af\in E^* $ defined by $ af(\alpha^ k(e))=-\alpha^k(f)(ae) $ and $ s $ acts on $ m_{k}(E) $ as derivation of $ m_{k}(E) $.
		\item There is a extension $ \mathcal{G} $ of $ s $ by $ m_{k}(E) $. The radical of $ \mathcal{G} $ is $ m_{k}(E)  $. The nilpotent radical is $ \mathcal{H}^m_{\lambda}. $
	\end{enumerate}
\end{prop}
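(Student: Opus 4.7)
The plan is to handle the two statements in turn: first establish that the prescribed formula simultaneously gives a Hom-module action and a derivation action, then construct the extension and identify its radicals.

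For part (i), I would begin by verifying that the dual action $af$ on $E^{*}$ is well-defined via $af(\alpha^{k}(e))=-\alpha^{k}(f)(ae)$, which is legitimate because $\gamma=\alpha|_{E}$ is an isomorphism and $\alpha^{k}$ is therefore surjective on $E$. Extending this by $0$ on $\C c\oplus\C I_{k}\oplus\C I_{k}^{*}$ gives a candidate map $\rho:s\to\mathrm{End}(m_{k}(E))$. The Hom-module axioms reduce to: (a) compatibility of $\rho(a)$ with the twist $\bar\alpha$ of $m_{k}(E)$, which on $E$ is the given Hom-$s$-module condition and on $E^{*}$ follows by transposing via the definition of $af$; (b) the Lie-module identity $\rho([a,a'])=\rho(a)\rho(a')-\rho(a')\rho(a)$, which on $E$ is again given and on $E^{*}$ is obtained by applying the definition twice and using the module identity on $E$. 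The derivation property (of type $\alpha^{k}$) is then checked bracket-by-bracket on $m_{k}(E)$. The only non-trivial case is $[e,f]=f(e)\,c$, and there the derivation identity is exactly
\[
[\rho(a)e,\alpha^{k}f]+[\alpha^{k}e,\rho(a)f]=(\alpha^{k}f)(ae)\,c+(af)(\alpha^{k}e)\,c=0=\rho(a)[e,f],
\]
which is the very formula chosen for $af$. The brackets $[I_{k},e]$, $[I_{k}^{*},f]$, $[I_{k},c]$, $[I_{k}^{*},c]$ reduce to the commutation of $\rho(a)$ with $\alpha^{k}$ on $E$ and $E^{*}$, together with $\rho(a)I_{k}=\rho(a)I_{k}^{*}=\rho(a)c=0$.

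For part (ii), I would define $\mathcal{G}=s\ltimes m_{k}(E)$ with the standard semidirect bracket $[(a,u),(a',u')]=([a,a']_{s},\,\rho(a)u'-\rho(a')u+[u,u']_{m_{k}(E)})$ and twist $(a,u)\mapsto(a,\bar\alpha(u))$. The Hom-Jacobi identity splits into three classes of terms: those purely inside $s$ (classical Jacobi), those purely inside $m_{k}(E)$ (Hom-Jacobi there), and the mixed terms, which collapse using the derivation property established in (i). This gives the short exact sequence $0\to m_{k}(E)\to\mathcal{G}\to s\to 0$ by construction.

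To identify the radicals, observe that $m_{k}(E)$ is solvable: $[m_{k}(E),m_{k}(E)]\subset \mathcal{H}^{m}_{\lambda}$ since $\C I_{k}\oplus\C I_{k}^{*}$ is abelian and its adjoint action lands in $\mathcal{H}^{m}_{\lambda}$; then $[\mathcal{H}^{m}_{\lambda},\mathcal{H}^{m}_{\lambda}]=\C c$ is central. Semisimplicity of $s$ forces its image in any quotient of $\mathcal{G}$ by a solvable ideal to survive, so the radical of $\mathcal{G}$ is exactly $m_{k}(E)$. For the nilpotent radical, $\mathcal{H}^{m}_{\lambda}$ is nilpotent and is an ideal of $\mathcal{G}$ (it is $s$-stable, being the sum of $E$, $E^{*}$ and $\C c$, and it is an ideal of $m_{k}(E)$). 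Any strictly larger ideal of $\mathcal{G}$ contained in $m_{k}(E)$ must meet $\C I_{k}\oplus\C I_{k}^{*}$ nontrivially, say in $yI_{k}+zI_{k}^{*}$; but iterating the adjoint action on a non-zero $e\in E$ yields $y\alpha^{kn}(e)\neq 0$, preventing nilpotency.

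The main obstacle will be the bookkeeping in the Hom-Jacobi verification for the semidirect product, where $s$ carries the trivial twist $\mathrm{id}_{s}$ while $m_{k}(E)$ carries $\bar\alpha$; matching the $\alpha^{k}$ exponents in the mixed terms is precisely what dictates the twisted formula $af(\alpha^{k}e)=-\alpha^{k}(f)(ae)$, so once the derivation property of $\rho(a)$ is confirmed, the rest is essentially a routine matching of indices.
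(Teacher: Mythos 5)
The paper states this proposition without any proof at all (no proof environment follows it in the source), so there is no argument of the author's to compare yours against; your proposal has to be judged on its own. As a plan it is the natural one and I believe it is correct: the contragredient-type action $af(\alpha^{k}(e))=-\alpha^{k}(f)(ae)$ is exactly what makes $\rho(a)$ an $\alpha^{k}$-derivation on the only nontrivial bracket $[e,f]=f(e)c$, the semidirect product $s\ltimes m_{k}(E)$ gives the extension, and the solvable/nilpotent ideal analysis identifies the two radicals. Two places deserve to be made airtight before this counts as a proof rather than an outline. First, the compatibility $\beta(af)=a\,\beta(f)$ on $E^{*}$ (condition \eqref{rep1} with trivial twist on $s$) is not automatic from the defining formula: unwinding it you need $\gamma^{-1}(ae)=a\gamma^{-1}(e)$, i.e.\ that the $s$-action commutes with $\gamma^{-1}$, which does follow from the Hom-module axiom on $E$ together with invertibility of $\gamma$, but this should be said explicitly since it is precisely where the regularity of the Heisenberg Hom-Lie algebra enters. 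Second, in the nilpotent-radical step you argue that an ideal meeting $\C I_{k}\oplus\C I_{k}^{*}$ cannot be nilpotent by iterating $\mathrm{ad}(yI_{k}+zI_{k}^{*})$ on some $e\in E$; to make this a statement about the lower central series of the \emph{ideal} (rather than ad-nilpotency of a single element) you should first note that such an ideal necessarily contains $[\,yI_{k}+zI_{k}^{*},E\,]=y\,\alpha^{k}(E)=E$ (or $E^{*}$ when $y=0$, $z\neq0$), after which the non-termination is immediate from the invertibility of $\alpha^{k}$. You should also state which definition of ``nilpotent radical'' you are using, since the paper never defines it in the Hom setting. With those two clarifications the argument is complete.
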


\section{The  faithful representation for Heisenberg Hom-Lie algebra}
The faithful representations of Lie superalgebras are studied in \cite{faithfull,Minimalfaithfull}. In this section we define and study  the  faithful representation for Heisenberg Hom-Lie algebra.
\begin{defn}
A representation $(V,[\cdot,\cdot]_{V}, \beta)$ is faithful if $ \beta $ is  an isomorphism  and the map
\begin{center}
\begin{tabular}{lll}
$\rho\colon$  &$\mathcal{G}$&$\longrightarrow  End(V)$\\
           & $x$&$\longmapsto [x,\cdot]_V$
\end{tabular}
\end{center}
 is injective.
\end{defn}
\begin{prop}
Let $(V,[\cdot,\cdot]_{V}, \beta)$ be a representation of  $\mathcal{H}^m_{\lambda}$. We suppose $ \beta $ is an isomorphim of $V$.
Then  $ \rho $ is a faithful representation if $ \rho(z)  \neq0. $
\end{prop}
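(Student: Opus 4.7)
The plan is to establish $\ker\rho=\{0\}$ by showing that $\ker\rho$ behaves as an $\alpha$-invariant ideal of $\mathcal{H}^m_\lambda$, and then exploiting the concrete bracket relations $[x_i,y_j]=\delta_{ij}z$ to force any nonzero element of $\ker\rho$ to produce a nonzero multiple of $z$ inside $\ker\rho$, which would contradict the hypothesis $\rho(z)\neq 0$.

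First I would rewrite axiom \eqref{rep1} as $\rho(\alpha(x))\circ\beta=\beta\circ\rho(x)$; since $\beta$ is an isomorphism, $\rho(x)=0$ forces $\rho(\alpha(x))=0$, so $\ker\rho$ is stable under $\alpha$. Next, I would use axiom \eqref{rep2} with $x=a\in\ker\rho$ and $y=b$ arbitrary: the second term on the right vanishes because $\rho(a)=0$, and the first because $\alpha(a)\in\ker\rho$ by the previous step, giving $[[a,b],\beta(v)]_V=0$ for every $v$. Surjectivity of $\beta$ then yields $\rho([a,b])=0$, so $\ker\rho$ is closed under bracketing with arbitrary elements of $\mathcal{H}^m_\lambda$.

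The final step is the combinatorial one. Take any nonzero $a=\sum_i\alpha_ix_i+\sum_j\beta_jy_j+\gamma z$ in $\ker\rho$. If some $\alpha_{i_0}\neq 0$, compute $[a,y_{i_0}]=\alpha_{i_0}z$ and conclude $z\in\ker\rho$, which forces $\rho(z)=0$; symmetrically when some $\beta_{j_0}\neq 0$, using $[a,x_{j_0}]=-\beta_{j_0}z$. If all $\alpha_i$ and $\beta_j$ vanish, then $a=\gamma z$ with $\gamma\neq 0$ already gives $\rho(z)=0$ directly. Each alternative contradicts the hypothesis, so $\ker\rho=\{0\}$ and $\rho$ is injective, hence faithful.

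There is no deep obstacle here; the main point of care is simply to use the full strength of $\beta$ being a bijection in the right places: injectivity when deducing $\alpha$-invariance of $\ker\rho$ from \eqref{rep1}, and surjectivity when passing from $[[a,b],\beta(v)]_V=0$ to $\rho([a,b])=0$ in the ideal step. Everything else reduces to the fact that $[\mathcal{H}^m_\lambda,\mathcal{H}^m_\lambda]=\mathbb{C}z$, which is exactly the geometric feature that makes $z$ the obstruction to faithfulness.
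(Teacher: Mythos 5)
Your argument is correct and complete; note that the paper states this proposition without supplying any proof, so there is nothing to compare against — your three steps ($\alpha$-stability of $\ker\rho$ from \eqref{rep1}, the ideal property from \eqref{rep2}, and the reduction to $z$ via $[x_i,y_j]=\delta_{ij}z$) are exactly the details the paper leaves implicit, and they do establish the claim. One small correction to your closing remark: in the $\alpha$-invariance step it is the \emph{surjectivity} of $\beta$, not its injectivity, that you actually use — from $\rho(\alpha(x))\circ\beta=\beta\circ\rho(x)=0$ you conclude $\rho(\alpha(x))=0$ only because $\beta$ is onto. Since $\beta$ is assumed to be an isomorphism this is harmless, but the attribution of which half of bijectivity does the work is reversed in your summary. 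It is also worth observing that the ideal step is genuinely necessary here (unlike in the classical Lie case, where $\rho([a,b])=[\rho(a),\rho(b)]$ is immediate), because the Hom-twisted identity \eqref{rep2} only controls $\rho([a,b])\circ\beta$ and involves $\rho(\alpha(a))$ rather than $\rho(a)$; you handle this correctly by first proving $\alpha$-stability of the kernel.
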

\begin{prop}
If $(V,[\cdot,\cdot]_{V}, \beta)$ is  a irreducible representation of  $\mathcal{H}^m_{\lambda}$ satisfying $ \beta\left(\rho(z)\right)  \neq0. $  then  $ V $ is  is a faithful representation for $\mathcal{H}^m_{\lambda}$.
\end{prop}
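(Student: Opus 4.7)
The plan is to reduce the claim to the previous proposition by establishing two intermediate facts: (i) $\beta$ is an isomorphism, and (ii) $\rho(z)\neq 0$. Once both are known, the preceding proposition applies directly.

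First I would show that $\ker(\beta)$ is a sub-representation of $V$. Indeed, if $\beta(v)=0$ and $x\in\mathcal{H}^m_{\lambda}$, then the compatibility \eqref{rep1} gives $\beta([x,v]_V)=[\alpha(x),\beta(v)]_V=0$, so $[x,v]_V\in\ker\beta$. By irreducibility, $\ker\beta$ equals $\{0\}$ or $V$; but $\ker\beta=V$ means $\beta=0$, contradicting $\beta\circ\rho(z)\neq 0$. Hence $\beta$ is injective.

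Next I would show that $\mathrm{Im}(\beta)$ is also a sub-representation. Here I invoke the earlier corollary stating that every multiplicative Heisenberg Hom-Lie algebra is regular, so $\alpha$ is bijective. For any $x\in\mathcal{H}^m_{\lambda}$ write $x=\alpha(x')$; then for $\beta(v)\in\mathrm{Im}(\beta)$,
\[
[x,\beta(v)]_V=[\alpha(x'),\beta(v)]_V=\beta([x',v]_V)\in\mathrm{Im}(\beta).
\]
Irreducibility then forces $\mathrm{Im}(\beta)=V$ (the case $\mathrm{Im}(\beta)=\{0\}$ is excluded exactly as above). Together with injectivity this proves $\beta$ is an isomorphism.

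Since $\beta$ is invertible and $\beta\circ\rho(z)\neq 0$, we conclude $\rho(z)\neq 0$. The previous proposition now applies and yields that $\rho$ is faithful. I do not expect a serious obstacle: the only delicate step is showing that $\mathrm{Im}(\beta)$ is a submodule, which hinges on the surjectivity of $\alpha$, and this is precisely what the regularity of multiplicative Heisenberg Hom-Lie algebras guarantees; the remainder is a standard Schur-type argument followed by an appeal to the preceding proposition.
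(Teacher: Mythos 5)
The paper states this proposition without any proof, so there is nothing to compare against directly; judged on its own, your argument is correct and complete. The two reductions are exactly the right ones: $\ker\beta$ is a submodule by \eqref{rep1}, and $\mathrm{Im}\,\beta$ is a submodule because $\alpha$ is surjective, which is supplied by the corollary that every multiplicative Heisenberg Hom-Lie algebra is regular (and the paper's standing convention that $\mathcal{H}^m_{\lambda}$ is multiplicative). Irreducibility then forces $\beta$ to be bijective, since $\beta=0$ is excluded by the hypothesis $\beta\circ\rho(z)\neq 0$, and the preceding proposition (itself stated without proof in the paper) finishes the job. Two small points worth making explicit in a write-up: first, the hypothesis ``$\beta(\rho(z))\neq 0$'' only typechecks if read as the composition $\beta\circ\rho(z)\neq 0$, which is the reading you adopt and the only sensible one; second, a sub-representation in the Hom-setting should also be $\beta$-invariant, which holds trivially for both $\ker\beta$ and $\mathrm{Im}\,\beta$, so nothing breaks, but since the paper never defines ``irreducible'' you should state the definition you are using. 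Note also that if one were willing to assume $\dim V<\infty$ the surjectivity step would be free from injectivity, but since the paper allows $V$ to be an arbitrary vector space your separate argument for $\mathrm{Im}\,\beta$ is genuinely needed.
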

\subsection{The minimal faithful representation for Heisenberg Lie algebra}
Let $\mathcal{G}$ be a Hom-Lie algebra and write \[\mu(\mathcal{G})=min\{dim\, V/\, V\textit{ is a faithful }  \mathcal{G} \textit{-module}\}.\]
\begin{theorem}
We have 
\[\mu(\mathcal{H}^m_\lambda)=m+2.\]

\end{theorem}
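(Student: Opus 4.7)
The plan is to prove $\mu(\mathcal{H}^m_\lambda) = m+2$ by establishing the two inequalities separately, adapting the classical Milnor--Burde argument for the Heisenberg Lie algebra to the Hom-Lie setting.

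For the upper bound $\mu \leq m+2$, I would construct an explicit faithful representation of dimension $m+2$. Take $V = \bigoplus_{k=0}^{m+1} \C e_k$ with
\[
\rho(x_i) = E_{m+1,i}, \qquad \rho(y_j) = E_{j,0}, \qquad \rho(z) = E_{m+1,0},
\]
where $E_{a,b}$ denotes the standard matrix unit. Multiplication of matrix units immediately gives $[\rho(x_i),\rho(y_j)] = \delta_{ij}\rho(z)$, so $\rho$ is a Lie-algebra homomorphism. To equip this with the Hom-Lie structure I would first invoke Corollary~\ref{hm} to bring $\alpha$ to block form $\operatorname{diag}(X,Y,\lambda)$ with $\fourIdx{t}{}{}{}{X}\,Y = \lambda I_m$, then take $\beta = \operatorname{diag}(1,Y,\lambda) \in \operatorname{GL}(V)$. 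A short matrix calculation verifies $\beta\,\rho(u)\,\beta^{-1} = \rho(\alpha(u))$ on each generator (which is \eqref{rep1}), and \eqref{rep2} then follows from \eqref{rep1} together with the Lie bracket relations. Since $\rho(z) \neq 0$ and $\beta$ is invertible, the preceding proposition yields faithfulness.

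For the lower bound $\mu \geq m+2$, I would proceed by strong induction on $n = \dim V$. Let $(V,[\cdot,\cdot]_V,\beta)$ be a faithful representation; set $T = \rho(z)$, $X_i = \rho(x_i)$, $Y_j = \rho(y_j)$. The preceding proposition gives $T \neq 0$, and a Hom-analog of Engel's theorem (valid because $\mathcal{H}^m_\lambda$ is nilpotent) shows $T$ is a nilpotent operator and produces a nonzero common-kernel subspace. The focus is then on $U := \bigcap_i \ker X_i$. Using the identities $\rho(\alpha(x_i))X_k = \rho(\alpha(x_k))X_i$ and $\rho(\alpha(x_i))T = \lambda\,TX_i$ that come from \eqref{rep2} applied to $[x_i,x_k]=0$ and $[x_i,z]=0$, together with the invertibility of the $X$-block of $\alpha$, one sees that $U$ is stable under every $X_k$, under $T$, and under $\beta$. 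The common-kernel subspace from Engel is contained in $U$, so $U \neq 0$.

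If $T|_U \neq 0$, I pick $v_0 \in U$ with $Tv_0 \neq 0$ and claim that the $m+2$ vectors $v_0, Y_1 v_0, \ldots, Y_m v_0, T\beta v_0$ are linearly independent. Starting from $av_0 + \sum_j b_j Y_j v_0 + c\,T\beta v_0 = 0$ and applying $X_i\beta^{-1}$, the $\beta$-twisted commutation $X_i\beta^{-1}Y_j - Y_j\beta^{-1}X_i = \delta_{ij}T/\lambda$ (which is \eqref{rep2} rewritten via \eqref{rep1}), together with $X_i\beta^{-1}v_0 = 0$ (from $\beta U = U$) and $X_i T v_0 = 0$ (from $Tv_0 \in U$), collapses the relation to $b_i\,Tv_0/\lambda = 0$, forcing each $b_i = 0$. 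The residual $av_0 + c\,T\beta v_0 = 0$ then forces $a = c = 0$ by the linear independence of $v_0$ and $T\beta v_0$: a proportionality $T\beta v_0 = \gamma v_0$ iterates via $\beta T = \lambda T\beta$ to give $T^k v_0$ a nonzero multiple of $v_0$ for every $k$, contradicting the nilpotency of $T$. Hence $n \geq m+2$. If instead $T|_U = 0$, then $U$ is a subrepresentation (stability under $Y_j$ follows from \eqref{rep2} when $Tv = 0$ for $v \in U$) on which $z$ acts trivially; the quotient $V/U$ inherits an invertible $\bar{\beta}$. When $\bar{T} \neq 0$, this quotient is again a faithful representation and the induction hypothesis gives $\dim(V/U) \geq m+2$, hence $n \geq m+2$; the residual sub-case $T V \subseteq U$ (equivalently $T^2 = 0$) is handled directly by choosing $v_0 \notin U$ with $Tv_0 \neq 0$ and running a variant of the independence argument with the vectors $v_0, X_1 v_0, \ldots, X_m v_0, Tv_0$ relative to the filtration $TV \subset \ker T$.

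The main obstacle is the careful management of the $\beta$-twists in the Hom-Lie commutation identities: in the classical Heisenberg setting one enjoys the clean relation $[\rho(x_i),\rho(y_j)] = \delta_{ij}\rho(z)$, whereas here one must work instead with the twisted form $X_i\beta^{-1}Y_j - Y_j\beta^{-1}X_i = \delta_{ij}T/\lambda$ and analogous twists throughout. Establishing the Hom-Engel nilpotency of $T$, verifying the stability properties of $U$ (especially $\beta U = U$), and executing the linear-independence computation with the correct $\beta$-factors—together with the sub-case analysis in the induction step—constitute the bulk of the technical work.
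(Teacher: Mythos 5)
Your upper-bound construction does not satisfy the Hom-module axiom \eqref{rep2} as written, and the step where you assert that \eqref{rep2} ``follows from \eqref{rep1} together with the Lie bracket relations'' is the precise point of failure. Given \eqref{rep1}, condition \eqref{rep2} is equivalent to the $\beta$-twisted relation $\rho([x,y])\circ\beta=\beta\circ\big(\rho(x)\beta^{-1}\rho(y)-\rho(y)\beta^{-1}\rho(x)\big)$, which is not implied by $\rho$ being an ordinary Lie-algebra homomorphism. Concretely, with $\rho(x_i)=E_{m+1,i}$, $\rho(y_j)=E_{j,0}$, $\rho(z)=E_{m+1,0}$ and $\beta=\operatorname{diag}(1,Y,\lambda)$, one computes $\rho(\alpha(x_i))\rho(y_j)-\rho(\alpha(y_j))\rho(x_i)=X_{ji}\,E_{m+1,0}$ while $\rho([x_i,y_j])\beta=\delta_{ij}E_{m+1,0}$; these agree only when $X=I$ (already for $m=1$, $X=(a)$, the identity forces $a=1$). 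The paper's construction avoids this by twisting the action of $y_i$ itself, setting $[y_i,v_{m+2}]_F=\beta(v_i)=\sum_k Y_{ki}v_k$, so that the product $\rho(\alpha(x_i))\rho(y_j)$ produces the factor $(\fourIdx{t}{}{}{}{X}\,Y)_{ij}=\lambda\delta_{ij}$ and \eqref{rep2} holds exactly because of the symplectic relation $\fourIdx{t}{}{}{}{X}\,Y=\lambda I$. Your construction is repairable by the same twist, but as stated it is not a representation of $\mathcal{H}^m_\lambda$.

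On the lower bound, be aware that the paper offers no proof at all (the theorem is stated and only the explicit $(m+2)$-dimensional representation follows it), so your induction scheme is genuinely additional content; however, it rests on an unestablished pillar: the nilpotency of $T=\rho(z)$ via a ``Hom-analog of Engel's theorem.'' The classical proof that $\rho(z)$ is nilpotent uses that $\rho(z)$ is a commutator of operators commuting with it, so its trace on each $\rho(z)$-stable generalized eigenspace vanishes; here one only has $\rho(z)\beta=\rho(\alpha(x_1))\rho(y_1)-\rho(\alpha(y_1))\rho(x_1)$, whose two terms are products of \emph{different} pairs of operators, and the twisted relations $\rho(\alpha(x_i))T=\lambda TX_i$ mean the generalized eigenspaces of $T$ are not $X_i$-stable but shifted (eigenvalue $\mu\mapsto\mu/\lambda$). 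So the trace/Engel argument does not transfer verbatim and this step needs a genuine proof (or an explicit counterexample check) before the linear-independence computation, which otherwise looks sound, can be run. The sub-case analysis at the end ($T|_U=0$, $T^2=0$) is also only gestured at.
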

Let $ (v_1,\cdots,v_{m+2}) $ be  a basis for a vector space $ F $. Consider the bilinear mapping \[[\cdot,\cdot]_F:\mathcal{H}^m_\lambda\times F\rightarrow F\] given by 
\begin{align*}
[x_i,v_j]_F&=\delta_{i,j}v_{m+1}\\
[y_i,v_j]_F&=\delta_{j,m+2}\beta(v_i)\\
[z,v_j]_F&=\delta_{j,m+2}\lambda v_{m+1}.\\
\end{align*}
Define $ \beta  \in End(F) $ by 
\begin{align*}
\begin{bmatrix}
Y_{m,m}&0_{m,1}&0_{m,1}\\
0_{1,m}&\lambda&0_{1,1}\\
0_{1,m}&0_{1,1}&1\\
\end{bmatrix}
\end{align*}
where the matrix $ Y_{m,m} $ is given in \eqref{ext}.
\begin{theorem}
 With the above notation,
the linear mapping
\begin{center}
\begin{tabular}{lll}
$\rho\colon$  & $\mathcal{H}^m_\lambda$ &$\longrightarrow  End(V)$\\
           & $x$&$\longmapsto [x,\cdot]_F$
\end{tabular}
\end{center}
\end{theorem}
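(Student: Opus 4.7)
The plan is to show that $(F,[\cdot,\cdot]_F,\beta)$ is a faithful Hom-module over $\mathcal{H}^m_\lambda$; since $\dim F=m+2=\mu(\mathcal{H}^m_\lambda)$ by the previous theorem, this $\rho$ realizes the minimal faithful representation. The proof decomposes into (a) verifying the two representation axioms \eqref{rep1} and \eqref{rep2}, and (b) verifying that $\rho$ is injective.

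For (a), since $[\cdot,\cdot]_F$ vanishes on all but a short list of basis pairs, it suffices to test the axioms on basis triples. The key instance of \eqref{rep1} is $x=x_i$, $v=v_j$ with $j\le m$: expanding $\alpha(x_i)=\sum_k X_{ki}x_k$ and $\beta(v_j)=\sum_l Y_{lj}v_l$ yields $[\alpha(x_i),\beta(v_j)]_F=(\fourIdx{t}{}{}{}{X}\,Y)_{ij}\,v_{m+1}$, whereas $\beta([x_i,v_j]_F)=\delta_{ij}\lambda v_{m+1}$; equality is exactly the relation $\fourIdx{t}{}{}{}{X}\,Y=\lambda I_m$ from \eqref{ext}. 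The remaining nontrivial cases ($x=y_i$ with $v=v_{m+2}$, and $x=z$ with $v=v_{m+2}$) use the block-diagonal form of $\beta$ and its eigenvalue $\lambda$ on $v_{m+1}$; all other triples give $0=0$. For \eqref{rep2}, note that $[[x,y],\beta(v)]_F$ can be nonzero only when $[x,y]=\pm z$ and $v=v_{m+2}$, i.e.\ $\{x,y\}=\{x_i,y_i\}$ for some $i$; in that case both sides again reduce to a multiple of $v_{m+1}$ governed by $\fourIdx{t}{}{}{}{X}\,Y=\lambda I_m$, and all remaining triples vanish on both sides because iterated brackets land in $\C v_{m+1}$, which every generator annihilates.

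For (b), I would invoke the earlier Proposition stating that $\rho$ is faithful once $\rho(z)\neq 0$. Here $\rho(z)(v_{m+2})=[z,v_{m+2}]_F=\lambda v_{m+1}\neq 0$, and $\beta$ is an isomorphism because its diagonal blocks $Y$, $\lambda$, $1$ are each invertible ($Y$ because $\fourIdx{t}{}{}{}{X}\,Y=\lambda I_m$ with $\lambda\ne 0$). The main obstacle is purely bookkeeping: axiom \eqref{rep2} is a cocycle-type identity and requires running through basis triples systematically. No genuinely new calculation intervenes — the symplectic identity $\fourIdx{t}{}{}{}{X}\,Y=\lambda I_m$ plugs in at exactly the right place each time — but care is needed to confirm every combination and to keep track of which slot of $v\in F$ is populated after each bracket.
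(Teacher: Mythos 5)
Your verification is correct: the two representation axioms reduce, exactly as you say, to the symplectic relation $\fourIdx{t}{}{}{}{X}\,Y=\lambda I_m$ on the only nonvanishing basis triples, and faithfulness follows from the paper's earlier proposition once one notes $\rho(z)(v_{m+2})=\lambda v_{m+1}\neq 0$ and that $\beta$ is invertible. The paper states this theorem without any proof, so there is nothing to compare against; your direct check is the natural argument and fills the omission adequately.
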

is a faithful representation of $ \mathcal{H}^m_\lambda. $ We will call the  minimal faithful representation for Heisenberg Hom-Lie algebra $ \mathcal{H}^m_\lambda. $

\subsection{Cohomology of Heisenberg  Hom-Lie algebra with respect to   minimal faithful
 representation
   }
A straightforward calculation shows that
\[ \delta^1_r\circ \delta^0_r(v) =0\qquad \forall 
v\in V\] Then, we can define the space $ H^1(\mathcal{H}^m_\lambda,F) =Z^1(\mathcal{H}^m_\lambda,F)/B^1(\mathcal{H}^m_\lambda,F).$
\begin{thm}
	Let $T:\mathcal{H}^m_{\lambda}\to F $ be linear. Then $ T $ is a no trivial  $1$-cocycle  if and only if the matrix representation of $T$ in the 
	ordered bases $ (x_1,\cdots,x_m, y_1,\cdots,y_m,z)$ (given in \eqref{ext} ) and $ \left(\beta^r(v_1),\dots,\beta^r(v_{m+2}) \right) $ is of the form 
	\[\begin{bmatrix}
	a_{11}&a_{12}&\dots&a_{1m}&0&.&\dots&0&0\\
	a_{21}&a_{22}&\dots&a_{2m}&\vdots&\vdots&\hdots&\vdots&0\\
	\vdots&\vdots&\dots & \vdots & 0 &	\dots&\dots&0 &0\\
	a_{m1}&a_{m2}&\dots&a_{mm}&b_{m+1,1}&.&\dots&b_{m+1,m}&0\\
	0&0&\dots&0&0&\dots&.&0&0\\
	0&0&\dots&0	&0&\dots&0&0&0
	\end{bmatrix}\]
	With $a_{ij}=a_{ji}$ for all $i,j\in \{1,\cdots,m\}$.
\end{thm}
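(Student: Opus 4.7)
The plan is to translate the cocycle condition $\delta^1_r T = 0$ into a system of block-matrix equations on $M$, and then quotient by the image of $\delta^0$ to obtain the canonical representative of the class in $H^1$. I will write $M$ in $3\times 3$ block form following the decompositions $\mathcal{H}^m_\lambda = E\oplus E^{*} \oplus \C z$ (blocks of size $m,m,1$) and $F = E \oplus \C v_{m+1}\oplus \C v_{m+2}$ (blocks of size $m,1,1$). The chain-map compatibility $\beta\circ T = T\circ \alpha$ immediately produces nine block equations: the top-left block $A$ must intertwine $YA = AX$, the bottom-right entry satisfies $M_{33} = \lambda M_{33}$, the $z$-column block of rows $1,\ldots,m$ satisfies $YM_{13} = \lambda M_{13}$, and so on.

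Next I will expand $\delta^1_r T(e_p, e_q) = 0$ on each generating pair using the iterated representation identity $[\alpha^r(x),\beta^r(v)]_F = \beta^r([x,v]_F)$, which reduces every bracket appearing in the cocycle formula to a known multiple of $\beta^r(v_{m+1})$ or $\beta^{r+1}(v_k)$. The decisive equation comes from $[x_i,y_j]=\delta_{ij}z$: matching the coefficient of each $\beta^r(v_l)$ on both sides yields four simultaneous identities. These force the top-middle block $M_{12}$ to be scalar-diagonal ($\mu I_m$, with $\mu = M_{m+1,2m+1}$), kill the $z$-column entries in rows $1,\ldots,m$, kill the $(m+2,2m+1)$-entry, and (using invertibility of the $m\times m$ matrix $Y$, which is where $m\ge 2$ enters) kill the entries $M_{m+2,i}$ for $i\le m$. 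The pairs $[x_i,x_j]=0$ and $[y_i,y_j]=0$ then contribute the symmetry $a_{ij}=a_{ji}$ of the upper-left block and the vanishing of the $(m+2,m+j)$-entries respectively; the pairs involving $z$ produce no new constraints.

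Third, I will compute the image of $\delta^0$. A $0$-cochain is an element $w \in F^\beta = \ker(\beta-\mathrm{id})$, and a direct calculation shows $\delta^0(w)$ has nonzero entries only on the scalar-diagonal $y$-block of rows $1,\ldots, m$ (proportional to $I_m$), on the $x$-columns of row $m+1$ (a row vector of the form $\lambda^{-r}c^{T}X^{r-1}$, with $c$ the $E$-component of $w$ subject to $Y^{T}c = c$), and at the $(m+1, 2m+1)$-entry. By subtracting a suitable coboundary I can normalize $\mu$ to $0$ and simultaneously annihilate the $x$-columns of row $m+1$, leaving precisely the claimed matrix form: the symmetric block $A = (a_{ij})$ in the upper-left, the row $(b_{m+1,j})_{j}$ in the $y$-columns of the row indexed by $\beta^r(v_{m+1})$, and zeros elsewhere.

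The converse direction is a routine verification: given $M$ of the stated form, I can check $\beta M = M\alpha$ and $\delta^1_r T = 0$ directly, using only the symmetry $A^T = A$ and the intertwining $YA = AX$. The main technical obstacle will be the careful coefficient-matching in the second step, especially isolating the consequences of $[x_i,y_j]=0$ for $i\ne j$, where the invertibility of $Y$ must be exploited to conclude that $M_{m+2,i}=0$ for all $i\le m$; the edge case $m=1$ should likely be treated separately, since this invertibility-based argument degenerates.
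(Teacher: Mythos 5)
There is a genuine mismatch between what you set out to prove and what the statement asserts. In your first step you impose the Hom-cochain compatibility $\beta\circ T=T\circ\alpha$, and in your third step you take $0$-cochains from $F^{\beta}=\ker(\beta-\mathrm{id})$. Both of these belong to the \emph{restricted} complex $C^{k}_{\alpha,\beta}$, whose first cohomology $H^1_{r,\beta}(\mathcal{H}^m_{\lambda},F)$ is the subject of the \emph{later} theorems in this subsection (where one indeed finds the intertwining conditions $\fourIdx{t}{}{}{}{X}A_mX=\lambda A_m$, $X\fourIdx{t}{}{}{}{V_{1m}}=\fourIdx{t}{}{}{}{V_{1m}}$, etc., and the smaller dimension $\dim E(1)+\sum_i\dim E(\lambda_i)\dim E(\lambda/\lambda_i)$). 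The theorem you are asked to prove concerns the unrestricted cohomology $H^1=\ker\delta^1/\mathrm{Im}\,\delta^0$: its normal form has an \emph{arbitrary} symmetric block $A=(a_{ij})$ with no condition of the type $YA=AX$, and the immediately following corollary gives $\dim H^1=\tfrac{m(m+3)}{2}=\tfrac{m(m+1)}{2}+m$, which is exactly the count of a free symmetric $m\times m$ block plus a free row $(b_{m+1,j})_j$. Your nine block equations from $\beta T=T\alpha$ would cut the symmetric block down to those $A$ satisfying $YA=AX$, and restricting $0$-cochains to $F^{\beta}$ would prevent you from killing the whole $x$-part of row $m+1$ by coboundaries unless $\ker(Y-I)$ is all of $E$; both effects contradict the stated form and the dimension $\tfrac{m(m+3)}{2}$.

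The remainder of your plan is essentially the right computation and, once step one is deleted, it does go through: expanding $\delta^1_rT=0$ on the pairs $(x_i,y_j)$, $(x_i,x_j)$, $(y_i,y_j)$, $(x_i,z)$, $(y_j,z)$ using $[\alpha^r(u),\beta^r(v)]_F=\beta^r([u,v]_F)$ yields $a_{ij}=a_{ji}$, forces the middle block of rows $1,\dots,m$ to be $c_{m+1}I_m$ with $T(z)=c_{m+1}\beta^r(v_{m+1})$, and kills row $m+2$ and the remaining $z$-column entries (the invertibility of $Y$ is what you need, and it holds for every $m$ since $\fourIdx{t}{}{}{}{X}Y=\lambda I$; the genuine degeneration at $m=1$ is the absence of off-diagonal pairs $i\neq j$, which you correctly flag). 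For the coboundaries you must use \emph{all} of $F$, not $F^{\beta}$: then $\delta^0(v_k)$ for $k\le m$ spans the full $x$-part of row $m+1$ and $\delta^0(v_{m+2})$ produces $I_m$ in the middle block together with the $(m+1,2m+1)$ entry, so $\dim B^1=m+1$ and the quotient has exactly the claimed representatives. As written, however, your proof establishes a different (and strictly smaller) normal form than the one in the statement.
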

\begin{cor}
	\[	\dim H^1\left( \mathcal{H}^m_{\lambda},F\right) =\frac{m(m+3)}{2}.\]
\end{cor}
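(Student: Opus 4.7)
The plan is to read off the answer directly from the matrix normal form exhibited in the preceding theorem. That theorem produces, for every class in $H^{1}(\mathcal{H}^m_\lambda,F)$, a canonical representative cocycle $T$ whose matrix in the bases $(x_1,\dots,x_m,y_1,\dots,y_m,z)$ and $(\beta^r(v_1),\dots,\beta^r(v_{m+2}))$ fits the stated block pattern; equivalently, the matrices of that shape form a linear complement of $B^{1}$ inside $Z^{1}$, and hence are canonically isomorphic, as a vector space, to $H^{1}(\mathcal{H}^m_\lambda,F)$.

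First I would enumerate the free parameters in that matrix. Only two zones carry unconstrained entries: the upper-left $m\times m$ block $(a_{ij})$ subject to the symmetry relation $a_{ij}=a_{ji}$, and the row $(b_{m+1,1},\dots,b_{m+1,m})$ sitting in the $y$-columns; every other entry is forced to vanish. A symmetric $m\times m$ matrix contributes $\tfrac{m(m+1)}{2}$ independent scalars, and the row contributes a further $m$, giving
\[
\dim=\frac{m(m+1)}{2}+m=\frac{m^{2}+3m}{2}=\frac{m(m+3)}{2}.
\]

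Second, I would confirm that this count really is $\dim H^{1}$ and not merely the dimension of some subspace of $Z^{1}$. The assignment $(a_{ij})_{i\le j}\oplus(b_{m+1,j})\longmapsto T$ is a linear injection into $Z^{1}(\mathcal{H}^m_\lambda,F)$ whose image is transverse to $B^{1}$; this is exactly what it means, in the statement of the preceding theorem, for $T$ to be a non-trivial $1$-cocycle of this matrix form. Composing with the quotient $Z^{1}\to H^{1}$ therefore yields a linear isomorphism between the parameter space and $H^{1}(\mathcal{H}^m_\lambda,F)$, which forces the dimensions to agree.

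There is no genuine obstacle beyond this bookkeeping: once the theorem is accepted, the corollary is immediate. The single point that deserves care is the symmetry constraint $a_{ij}=a_{ji}$, which is what reduces the naive $m^{2}$ off-diagonal freedom down to $\tfrac{m(m+1)}{2}$; forgetting it would give the wrong count $m^{2}+m$ rather than $\tfrac{m(m+3)}{2}$.
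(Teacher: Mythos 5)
Your proposal is correct and is exactly the intended derivation: the paper states this corollary without proof, and the only content is the parameter count in the matrix normal form of the preceding theorem, namely $\tfrac{m(m+1)}{2}$ for the symmetric block $(a_{ij})$ plus $m$ for the row $(b_{m+1,j})$, interpreted (as you do) as the dimension of a complement of $B^1$ in $Z^1$. No discrepancy with the paper.
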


Now we give the spaces $B^1_{r,\beta}\left( \mathcal{H}^m_{\lambda},F\right)  $, $Z^1_{r,\beta}\left( \mathcal{H}^m_{\lambda},F\right)  $  and $H^1_{r,\beta}\left( \mathcal{H}^m_{\lambda},F\right)$ and their dimension.
 Let $ \mu\in \C $  and  $ E(\mu) =\{u\in E/ Xu=\mu u\}$.\\
Let $ (u_1,\cdots,u_{n},v_{m+2}) $ is a basis of the space of $ 0 $-hom-cochain \[ C_{\alpha,\beta}^{0}(\mathcal{H}^m_\lambda,F) =\{v\in F/\,\beta(v)=v\}\] such that $ u_k=\displaystyle \sum_{i=1}^{m+1}\mu_{ik}v_i $.
Let $f_{k},\ g:\mathcal{H}^m_{\lambda}\to F$  be the
linear transformations respectively defined by
	\begin{align*}
	f_{k}(x_p)&=\mu_{pk}\lambda^{r}v_{m+1},\quad &f_{k}(y_p)&=0,& f_{k}(z)&=0, \\
		g(x_p)&=0,\quad &g(y_p)&=\beta^{r}(v_{p})& g(z)&=\lambda^{r}v_{m+1}.\\
	\end{align*}
\begin{thm}
	With above notations, associated to the  minimal faithful representation of the Heisenberg  Hom-Lie algebra $ \mathcal{H}^m_\lambda $, we have
	\[ B^1_{r,\beta}\left( \mathcal{H}^m_{\lambda},F\right)
	 =\bigoplus_{1\leq k\leq n}<f_{k}>\bigoplus<g> .\] 
\end{thm}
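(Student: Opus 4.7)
The approach is to compute $B^1_{r,\beta}\left(\mathcal{H}^m_{\lambda},F\right)$ directly as $\mathrm{Im}(\delta^0_r)$. Since $C^0_{\alpha,\beta}(\mathcal{H}^m_\lambda, F) = \{v \in F : \beta(v) = v\}$ has the given basis $(u_1, \dots, u_n, v_{m+2})$, my plan reduces to evaluating $\delta^0_r$ on these $n+1$ generators, identifying the images with the stated $f_k$ and $g$, and verifying linear independence of the resulting family.

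First I would unpack the constraint $\beta(u_k) = u_k$. The matrix of $\beta$ is block-diagonal with blocks $Y$, $\lambda$, and $1$ acting on $\langle v_1,\dots,v_m\rangle$, $\langle v_{m+1}\rangle$, $\langle v_{m+2}\rangle$, so writing $\mu_k^{(m)} := (\mu_{1k},\dots,\mu_{mk})^t$ the condition forces $Y\mu_k^{(m)} = \mu_k^{(m)}$. Combined with the identity ${}^t X\, Y = \lambda I_m$ from Corollary \ref{hm}, this translates into ${}^t X\,\mu_k^{(m)} = \lambda \mu_k^{(m)}$, whence $({}^t X)^s \mu_k^{(m)} = \lambda^s \mu_k^{(m)}$ for every $s \ge 0$. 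This eigenvalue identity is what makes the power of $\lambda$ in the formula for $f_k$ collapse to the stated exponent.

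The second step is the direct computation from $\delta^0_r(v)(\xi) = [\alpha^{r-1}(\xi), v]_F$. Taking $\xi = x_p$, expanding $\alpha^{r-1}(x_p) = \sum_j (X^{r-1})_{jp} x_j$, and using $[x_j, v_i]_F = \delta_{ij} v_{m+1}$, the sum collapses via the eigenvalue identity to a scalar multiple of $\mu_{pk} v_{m+1}$, matching $f_k(x_p)$. For $\xi = y_p$ or $\xi = z$, the relations $[y_i, v_j]_F = \delta_{j,m+2}\beta(v_i)$ and $[z, v_j]_F = \delta_{j,m+2}\lambda v_{m+1}$ both vanish on $u_k$ since $u_k$ has no $v_{m+2}$ component; so $\delta^0_r(u_k) = f_k$. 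An entirely parallel calculation for $v_{m+2}$, which is supported only at the $(m+2)$-slot, gives $\delta^0_r(v_{m+2})(x_p) = 0$, $\delta^0_r(v_{m+2})(y_p) = \beta^r(v_p)$ (after pushing $\alpha^{r-1}$ through to $Y^{r-1}$ and absorbing one extra $\beta$), and $\delta^0_r(v_{m+2})(z) = \lambda^r v_{m+1}$, thus $\delta^0_r(v_{m+2}) = g$.

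Linear independence of $\{f_1, \dots, f_n, g\}$ is then a short check: $g(z) = \lambda^r v_{m+1} \neq 0$ while every $f_k$ vanishes on $z$, so $g \notin \mathrm{span}\{f_k\}$; and a relation $\sum_k a_k f_k = 0$ evaluated at each $x_p$ yields $\sum_k a_k \mu_{pk} = 0$ for all $p$, which by the independence of the coefficient vectors $\mu_k^{(m)}$ (inherited from the independence of the $u_k$ in $C^0$) forces $a_k = 0$. I expect the main obstacle to be the careful bookkeeping in $\delta^0_r(u_k)(x_p)$: one must correctly exploit ${}^t X\, Y = \lambda I_m$ from the symplectic normal form to rewrite $(X^{r-1})^t \mu_k^{(m)}$ as $\lambda^{r-1} \mu_k^{(m)}$ and match the stated scalar in $f_k$.
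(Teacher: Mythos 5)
Your approach is the right one, and in fact the paper offers no proof of this theorem at all, so there is nothing to diverge from: computing $\mathrm{Im}\,\delta^0_r$ on the basis $(u_1,\dots,u_n,v_{m+2})$ of $C^0_{\alpha,\beta}(\mathcal{H}^m_\lambda,F)=\ker(\beta-\mathrm{id})$ is the only natural route, and your calculation is correct. Two small points. First, the block-diagonal form of $\beta$ together with $\fourIdx{t}{}{}{}{X}\,Y=\lambda I_m$ gives $\fourIdx{t}{}{}{}{X}\,\mu_k^{(m)}=\lambda\,\mu_k^{(m)}$, hence $\delta^0_r(u_k)(x_p)=\lambda^{r-1}\mu_{pk}v_{m+1}$; this is $\lambda^{-1}f_k(x_p)$ rather than $f_k(x_p)$ as the paper normalizes it, but since $\lambda\in\C^*$ the spans coincide, so this is only a cosmetic discrepancy (likely an off-by-one in the paper's exponent). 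Second, your reduction of the independence of the $f_k$ to the independence of the truncated vectors $\mu_k^{(m)}$ uses that $\mu_{m+1,k}=0$, which follows from $\lambda\mu_{m+1,k}=\mu_{m+1,k}$ only when $\lambda\neq 1$; when $\lambda=1$ the vector $v_{m+1}$ itself lies in $C^0_{\alpha,\beta}$ and satisfies $\delta^0_r(v_{m+1})=0$, so the corresponding $f_k$ vanishes and the stated direct sum degenerates. That edge case is a defect of the theorem as stated rather than of your argument, but it deserves an explicit sentence. Otherwise the verification that $\delta^0_r(u_k)$ kills $y_p$ and $z$ (no $v_{m+2}$-component in $u_k$), that $\delta^0_r(v_{m+2})=g$ exactly, and that $g\notin\mathrm{span}\{f_k\}$ via evaluation at $z$, is complete and correct.
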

\begin{cor}
	\[ \dim\left(  B^1_{r,\beta}( \mathcal{H}^m_{\lambda},F)     \right)=\dim E(1)+1  \]
\end{cor}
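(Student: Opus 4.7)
The plan is to obtain the corollary as a straightforward dimension count from the preceding theorem, which supplies the explicit generating set
\[B^1_{r,\beta}(\mathcal{H}^m_{\lambda},F) = \bigoplus_{1\leq k\leq n}\langle f_{k}\rangle \oplus \langle g\rangle.\]
Thus the task reduces to (a) verifying that this decomposition is a genuine direct sum, i.e.\ that the non-trivial $f_{k}$'s together with $g$ are linearly independent, and (b) identifying the resulting count of independent generators with $\dim E(1)+1$.

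For (a), I would compare the supports of these linear maps on the generators $x_{1},\dots,x_{m},y_{1},\dots,y_{m},z$. Each $f_{k}$ is concentrated on the $x_{p}$'s, with image contained in $\langle v_{m+1}\rangle$, whereas $g$ vanishes on every $x_{p}$ while producing the non-zero value $\beta^{r}(v_{p})$ on $y_{p}$ and $\lambda^{r}v_{m+1}$ on $z$. Hence $g$ cannot lie in the span of the $f_{k}$'s. Independence among the non-zero $f_{k}$'s then reduces to the linear independence of the coefficient vectors $(\mu_{1k},\dots,\mu_{mk})$, which is inherited from the fact that the $u_{k}$'s form part of a basis of the $0$-cochain space.

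For (b), I would analyse $C^{0}_{\alpha,\beta}(\mathcal{H}^m_{\lambda},F)=\{v\in F:\beta(v)=v\}$ using the block-diagonal form of $\beta$ together with the defining relation $\fourIdx{t}{}{}{}{X}\,Y=\lambda I_{m}$. A vector $u=\sum_{i=1}^{m}\mu_{i}v_{i}$ is $\beta$-fixed exactly when $Y\mu=\mu$, and the identity $Y=\lambda(\fourIdx{t}{}{}{}{X})^{-1}$ converts this into $\fourIdx{t}{}{}{}{X}\mu=\lambda\mu$, a condition with solution space of the same dimension as $E(1)$ via the identification dictated by the symplectic pairing. Combined with the straightforward contribution of $v_{m+2}$ (always fixed) and of $v_{m+1}$ (fixed only when $\lambda=1$), this yields the claimed count.

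The delicate point, which I expect to be the main obstacle, is the borderline case $\lambda=1$: there $v_{m+1}$ is itself $\beta$-fixed and may be taken as one of the $u_{k}$'s, yet the corresponding $f_{k}$ vanishes identically because $\mu_{pk}=0$ for all $p\leq m$. This zero generator lowers the effective count by one, which is exactly compensated by the extra fixed vector $v_{m+1}\in C^{0}_{\alpha,\beta}$. Tracking that these opposing contributions cancel cleanly, and that the description via $\fourIdx{t}{}{}{}{X}\mu=\lambda\mu$ collapses to $E(1)$ in the uniform statement, is what needs the most care before concluding $\dim B^{1}_{r,\beta}(\mathcal{H}^m_{\lambda},F) = \dim E(1)+1$.
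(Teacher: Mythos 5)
Your overall strategy is the natural one: the corollary is meant to be read off from the preceding theorem's generating set $\bigoplus_{k}\langle f_{k}\rangle\oplus\langle g\rangle$, and your part (a) is fine --- $g$ is independent of the $f_{k}$'s by comparing supports, the nonzero $f_{k}$'s are independent because the coefficient vectors $(\mu_{1k},\dots,\mu_{mk})$ are, and in the case $\lambda=1$ the extra fixed vector $v_{m+1}$ yields a vanishing $f_{k}$ that contributes nothing. The gap is in part (b), at the single point where the eigenvalue is identified. From $\fourIdx{t}{}{}{}{X}\,Y=\lambda I_{m}$ you correctly obtain $Y=\lambda\,(\fourIdx{t}{}{}{}{X})^{-1}$, so $Y\mu=\mu$ is equivalent to $\fourIdx{t}{}{}{}{X}\,\mu=\lambda\mu$; but the solution space of this equation is the $\lambda$-eigenspace of $\fourIdx{t}{}{}{}{X}$, whose dimension equals $\dim\ker(X-\lambda I)=\dim E(\lambda)$ (a matrix and its transpose have the same geometric multiplicities), \emph{not} $\dim E(1)$. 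There is no ``identification dictated by the symplectic pairing'' that converts a $\lambda$-eigenspace into a $1$-eigenspace; that sentence carries no content and is where the argument breaks. What your computation actually proves is $\dim B^{1}_{r,\beta}(\mathcal{H}^m_{\lambda},F)=\dim E(\lambda)+1$. A one-dimensional example already separates the two: for $m=1$, $X=(a)$, $Y=(\lambda/a)$ with $a=\lambda\neq 1$, one finds exactly one nonzero $f_{k}$ and hence $\dim B^{1}=2$, while $\dim E(1)+1=1$.

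This discrepancy is not an artifact of your argument: it is forced by the rest of the section. The $f_{k}$'s occupy the $U_{1,m}$-block of a $1$-Hom-cochain, whose admissible space $\{U:\fourIdx{t}{}{}{}{X}\,\fourIdx{t}{}{}{}{U}=\lambda\,\fourIdx{t}{}{}{}{U}\}$ has dimension $\dim E(\lambda)$, whereas the block $V_{1,m}$ with $X\,\fourIdx{t}{}{}{}{V}=\fourIdx{t}{}{}{}{V}$ (dimension $\dim E(1)$) is precisely the part that survives into $H^{1}_{r,\beta}$ in the closing theorem of the subsection. So the printed statement appears to contain a typo ($E(1)$ for $E(\lambda)$); the two coincide only when $\lambda=1$ or by accident. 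You should either prove the corrected equality $\dim B^{1}_{r,\beta}(\mathcal{H}^m_{\lambda},F)=\dim E(\lambda)+1$ --- which your write-up essentially already does once the misidentification is removed --- or explain why $\dim E(\lambda)=\dim E(1)$ in the situation at hand, which is false in general.
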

\begin{theorem}
Let $ f:\mathcal{H}^m_\lambda\to F  $	
be defined by the matrix representation \[ \begin{bmatrix}
A_m&B_{m}&C_{m,1}\\
U_{1,m}&V_{1m}&D_{m,1}\\
W_{1,m}&T_{1,m}&\mu\\
\end{bmatrix} .\]
Then, $ f $	 is a $1$-Hom-cochain  if and only if the following conditions are satisfied :
\begin{align*}
\fourIdx{t}{}{}{}{X}\,A_m X&=\lambda A_m, \quad& \fourIdx{t}{}{}{}{X}\, \fourIdx{t}{}{}{}{U_{1m}}&=\lambda\, \fourIdx{t}{}{}{}{U_{1m}},\quad&
\fourIdx{t}{}{}{}{X}\, \fourIdx{t}{}{}{}{W_{1m}}\,   &=\fourIdx{t}{}{}{}{W_{1m}}, \quad&
\fourIdx{t}{}{}{}{X}\, B_{m}&=B_{m}\,\fourIdx{t}{}{}{}{X},\\ 
X\fourIdx{t}{}{}{}{V_{1m}}&=\fourIdx{t}{}{}{}{V_{1m}},\quad& X\fourIdx{t}{}{}{}{T_{1m}}&=\fourIdx{t}{}{}{}{T_{1m}},\quad&
\fourIdx{t}{}{}{}{X}\,C_{m1}&=C_{m1}&\quad \lambda\mu&=\mu. 
\end{align*}
\end{theorem}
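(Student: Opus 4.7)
My plan is to work directly from the cochain condition $\beta \circ f = f \circ \alpha$. In the basis $(x_1,\ldots,x_m, y_1,\ldots,y_m, z)$ of $\mathcal{H}^m_\lambda$ the matrix of $\alpha$ is the block-diagonal $S = \operatorname{diag}(X, Y, \lambda)$ from Corollary~\ref{hm}; in the basis $(v_1,\ldots,v_{m+2})$ of $F$ the matrix of $\beta$ is the block-diagonal $M = \operatorname{diag}(Y, \lambda, 1)$ given just before the statement of the minimal faithful representation. Thus the cocycle condition becomes the matrix identity $Mf = fS$, and I would prove the theorem by expanding both sides block-by-block.

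First I would compute each side as a $3 \times 3$ block matrix. Because $S$ and $M$ are block-diagonal, left-multiplication by $M$ scales each block-row of $f$ by the corresponding diagonal block of $M$, while right-multiplication by $S$ scales each block-column of $f$ by the corresponding block of $S$. Equating the two resulting block matrices yields nine equations, one per block position. Two of these, at positions $(2,3)$ and $(3,3)$, reduce to $\lambda D_{m,1} = \lambda D_{m,1}$ (automatic) and $\mu = \lambda \mu$, which is exactly the last listed condition.

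Next I would translate each of the remaining seven equations into the form stated in the theorem, using only the single identity $\fourIdx{t}{}{}{}{X}\,Y = \lambda I_m$ from Corollary~\ref{hm}, equivalently $Y = \lambda\,\fourIdx{t}{}{}{}{X^{-1}}$ and $\fourIdx{t}{}{}{}{Y} = \lambda X^{-1}$. For example, the $(1,1)$-block gives $Y A_m = A_m X$, which upon left-multiplication by $\fourIdx{t}{}{}{}{X}$ becomes $\fourIdx{t}{}{}{}{X}\,A_m\,X = \lambda A_m$; the $(1,2)$-block $Y B_m = B_m Y$ becomes $\fourIdx{t}{}{}{}{X}\,B_m = B_m\,\fourIdx{t}{}{}{}{X}$; the $(2,1)$-block $\lambda U_{1,m} = U_{1,m} X$ becomes, after transposition, $\fourIdx{t}{}{}{}{X}\,\fourIdx{t}{}{}{}{U_{1,m}} = \lambda\,\fourIdx{t}{}{}{}{U_{1,m}}$; the $(1,3)$-block $Y C_{m,1} = \lambda C_{m,1}$ becomes $\fourIdx{t}{}{}{}{X}\,C_{m,1} = C_{m,1}$; and the three blocks corresponding to $V_{1,m}$, $W_{1,m}$, $T_{1,m}$ yield the remaining listed constraints by one-line manipulations of the same type.

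The main obstacle is purely notational bookkeeping: keeping straight the transposes and, when passing between $X$ and $Y$ via $\fourIdx{t}{}{}{}{X}\,Y = \lambda I$, tracking whether the eigenvalue appearing is $1$ or $\lambda$. Fixing once and for all the shorthand $\fourIdx{t}{}{}{}{Y} = \lambda X^{-1}$ makes each such reduction a single substitution. The converse direction then requires no further work: assuming the listed relations, the same manipulations run in reverse show that every block of $Mf - fS$ vanishes, so $\beta \circ f = f \circ \alpha$ and $f$ is a $1$-Hom-cochain.
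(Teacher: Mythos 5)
The paper states this theorem without a proof, so there is nothing internal to compare against; your approach --- writing the Hom-cochain condition $\beta\circ f=f\circ\alpha$ as the matrix identity $Mf=fS$ with $M=\operatorname{diag}(Y,\lambda,1)$, $S=\operatorname{diag}(X,Y,\lambda)$, equating the nine blocks, and converting everything into conditions on $X$ via $\fourIdx{t}{}{}{}{X}\,Y=\lambda I_m$ --- is the evident and correct one. The blocks you actually work out ($A_m$, $B_m$, $C_{m,1}$, $U_{1,m}$, and the two automatic/scalar entries giving $\lambda\mu=\mu$) all come out exactly as listed, and the converse direction is indeed just the same manipulations reversed.

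The one weak point is your blanket claim that the remaining three blocks ``yield the remaining listed constraints by one-line manipulations of the same type.'' For $V_{1,m}$ and $W_{1,m}$ this is true: $\lambda V_{1,m}=V_{1,m}Y$ transposes to $X\,\fourIdx{t}{}{}{}{V_{1,m}}=\fourIdx{t}{}{}{}{V_{1,m}}$, and $W_{1,m}X=W_{1,m}$ transposes to $\fourIdx{t}{}{}{}{X}\,\fourIdx{t}{}{}{}{W_{1,m}}=\fourIdx{t}{}{}{}{W_{1,m}}$. But for the $(3,2)$ block the equation is $T_{1,m}Y=T_{1,m}$ (the third block-row of $M$ acts by $1$, the second block-column of $S$ by $Y$), and transposing with $\fourIdx{t}{}{}{}{Y}=\lambda X^{-1}$ gives $X\,\fourIdx{t}{}{}{}{T_{1,m}}=\lambda\,\fourIdx{t}{}{}{}{T_{1,m}}$, \emph{not} the printed condition $X\,\fourIdx{t}{}{}{}{T_{1,m}}=\fourIdx{t}{}{}{}{T_{1,m}}$. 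So your method, carried out in full, does not reproduce the statement verbatim: either the theorem has a typographical omission of the factor $\lambda$ in that condition, or you must account for the discrepancy. You should carry out this block explicitly and flag the correction rather than asserting agreement.
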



\begin{thm}
	Let $T:\mathcal{H}^m_{\lambda}\to F $ be linear. Then $ T $ is a $1$-Hom-cocycle  if and only if  the matrix representation of $T$ in the 
	ordered bases $ (x_1,\cdots,x_m, y_1,\cdots,y_m,z)$ (given in \eqref{ext} ) and $ \left(\beta^r(v_1),\dots,\beta^r(v_{m+2}) \right) $ is of the form 
	\[\begin{bmatrix}
	A_{m}&b I_{m}&0_m\\
	U_{1m}&V_{1m}&b\\
	0_{1m}&0_{1m}&0
	\end{bmatrix}\]
	with $ \fourIdx{t}{}{}{}{A_m}\,=A_m$,$\quad   \fourIdx{t}{}{}{}{X}\,A_m\, X =\lambda A_m$, $\quad	 \fourIdx{t}{}{}{}{X}\, \fourIdx{t}{}{}{}{U_{1m}}\,=\lambda \fourIdx{t}{}{}{}{U_{1m}}  $ and $X\,\fourIdx{t}{}{}{}{V_{1m}}=  \fourIdx{t}{}{}{}{V_{1m}} $.
\end{thm}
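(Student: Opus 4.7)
The plan is to start from the previous theorem's characterization of 1-Hom-cochains (which already constrains each block of the matrix through compatibility with $\alpha$ and $\beta$) and then impose the additional cocycle equation $\delta^1_r T = 0$. For $k=1$, the coboundary reads
\[
\delta^1_r T(u,v) = -T([u,v]) + [\alpha^r(u), T(v)]_F - [\alpha^r(v), T(u)]_F,
\]
so by antisymmetry and bilinearity it suffices to check vanishing on pairs of basis elements of $\mathcal{H}^m_\lambda$, namely $(x_i, x_j)$, $(y_i, y_j)$, $(x_i, y_j)$, $(x_i, z)$ and $(y_i, z)$.

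For each such pair I would substitute the Heisenberg bracket ($[x_i, y_j] = \delta_{ij} z$, others zero), the block-diagonal form of $\alpha$ (so $\alpha^r(x_i)$ stays in the $x$-span, $\alpha^r(y_j)$ in the $y$-span, and $\alpha^r(z) = \lambda^r z$), and the explicit module action
\[
[x_i, v_j]_F = \delta_{ij} v_{m+1}, \qquad [y_i, v_j]_F = \delta_{j,m+2}\beta(v_i), \qquad [z, v_j]_F = \delta_{j,m+2}\lambda v_{m+1}.
\]
The pair $(x_i, x_j)$ involves only the $x$-action and lands in $\C v_{m+1}$; combined with the cochain condition $\fourIdx{t}{}{}{}{X}\,A_m\,X = \lambda A_m$ it should yield the symmetry $\fourIdx{t}{}{}{}{A_m} = A_m$. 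The decisive pair is $(x_i, y_j)$: it is the only one that couples $T(z)$ (through the Heisenberg bracket) to $T(x_i)$ and $T(y_j)$, and it simultaneously activates the $v_{m+2}$-components via $[y_j, v_{m+2}]_F = \beta(v_j)$. Projecting the resulting identity onto $\langle v_1,\dots,v_m\rangle$, onto $\langle v_{m+1}\rangle$ and onto $\langle v_{m+2}\rangle$ separately should force $B_m = bI_m$ for a single scalar $b$, pin down the middle entry of $T(z)$ to equal that same $b$, and kill the top-right column $C_m$. The remaining pairs $(y_i, y_j)$, $(x_i, z)$ and $(y_i, z)$ then eliminate the bottom-middle row, the bottom-left row and the bottom-right entry, leaving only the surviving block constraints on $U_{1m}$ and $V_{1m}$ inherited from the cochain theorem.

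The main obstacle will be bookkeeping rather than conceptual: one has to track carefully which Kronecker deltas survive in each bracket and which $\beta$- or $\alpha^r$-iterates stand in front of each basis vector, then decompose each vector-valued identity $\delta^1_r T(u,v) = 0$ into its scalar components along each $\beta^r(v_k)$. In particular, the single equation coming from $(x_i, y_j)$ must be split into three independent subsystems — projection onto the $v_1,\dots,v_m$ block, onto $v_{m+1}$, and onto $v_{m+2}$ — so as to extract simultaneously the diagonal shape of $B_m$, the matching scalar $b$, and the vanishing of the out-of-block entries. Once this is assembled, the matrix shape claimed in the statement emerges; conversely, one checks directly that any $T$ of that shape satisfies every basis-pair cocycle equation, closing the "if and only if."
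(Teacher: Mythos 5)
Your proposal is correct and follows essentially the same route the paper takes for results of this kind: the paper states this theorem without a written proof, but its method elsewhere (e.g.\ the $\alpha^k$-derivation theorem) is exactly the direct computation you describe — write $\delta^1_r T(u,v)=-T([u,v])+[\alpha^r(u),T(v)]_F-[\alpha^r(v),T(u)]_F$, evaluate on all basis pairs using the explicit module action and the block forms of $\alpha$ and $\beta$, and project onto the $\langle v_1,\dots,v_m\rangle$, $v_{m+1}$ and $v_{m+2}$ components. Your identification of $(x_i,y_j)$ as the pair that simultaneously produces $B_m=bI_m$, matches the middle entry of $T(z)$ to $b$, and (together with the $(x_i,z)$, $(y_i,z)$ pairs) kills the last column and bottom row is accurate, so only routine bookkeeping remains.
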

\begin{thm}
	Let $T:\mathcal{H}^m_{\lambda}\to F $ be linear. Then $ T $ is a no trivial  $1$-Hom--cocycle  if and only if the matrix representation of $T$ in the 
	ordered bases $ (x_1,\cdots,x_m, y_1,\cdots,y_m,z)$ (given in \eqref{ext} ) and $ \left(\beta^r(v_1),\dots,\beta^r(v_{m+2}) \right) $ is of the form 	
		\[\begin{bmatrix}
	A_{m}&0_{m}&0\\
	0_{1m}&V_{1m}&0\\
	0_{1m}&0_{1m}&0
	\end{bmatrix}\]
	with $ \fourIdx{t}{}{}{}{X}\,A_m\, X =\lambda A_m$ 	 and $X\,\fourIdx{t}{}{}{}{V_{1m}}=  \fourIdx{t}{}{}{}{V_{1m}} $
	
\end{thm}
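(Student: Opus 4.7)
The plan is to start from the preceding characterization of $1$-Hom-cocycles and reduce each cocycle to the claimed normal form by subtracting an appropriate element of $B^{1}_{r,\beta}(\mathcal{H}^m_\lambda,F)$. By that theorem, every such cocycle is represented by a matrix
\[
\begin{bmatrix} A_m & bI_m & 0 \\ U_{1m} & V_{1m} & b \\ 0 & 0 & 0 \end{bmatrix}
\]
with $\fourIdx{t}{}{}{}{A_m}=A_m$, $\fourIdx{t}{}{}{}{X}A_mX=\lambda A_m$, $\fourIdx{t}{}{}{}{X}\,\fourIdx{t}{}{}{}{U_{1m}}=\lambda\,\fourIdx{t}{}{}{}{U_{1m}}$ and $X\,\fourIdx{t}{}{}{}{V_{1m}}=\fourIdx{t}{}{}{}{V_{1m}}$, and I want to arrange that the entries $bI_m$, $b$ and $U_{1m}$ are killed while $A_m$, $V_{1m}$ remain intact.

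First I would write out the matrices of the available coboundaries in the basis $(\beta^{r}(v_1),\dots,\beta^{r}(v_{m+2}))$. For $g$: since $g(x_p)=0$, $g(y_p)=\beta^{r}(v_p)$ and $g(z)=\lambda^{r}v_{m+1}=\beta^{r}(v_{m+1})$, its matrix has $I_m$ in the middle ($y$-column) block, a $1$ in position $(m+1,2m+1)$ and zeros elsewhere. Subtracting $b\cdot g$ from $T$ therefore kills the $bI_m$ and $b$ entries and leaves the $A_m$, $U_{1m}$, $V_{1m}$ blocks untouched.

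Next I would normalize $U_{1m}$ using the $f_k$'s. The cocycle constraint forces $\fourIdx{t}{}{}{}{U_{1m}}\in E_{\fourIdx{t}{}{}{}{X}}(\lambda)$. The key identification is $E_{\fourIdx{t}{}{}{}{X}}(\lambda)=E_{Y}(1)$, which follows from the $\lambda$-symplectic relation $\fourIdx{t}{}{}{}{X}Y=\lambda I$, i.e.\ $Y=\lambda\,\fourIdx{t}{}{}{}{X^{-1}}$: $Yw=w \iff \fourIdx{t}{}{}{}{X}w=\lambda w$. By construction the column vectors $(\mu_{1k},\dots,\mu_{mk})^{T}$ form a basis of the $\beta$-fixed subspace of $\mathrm{span}(v_1,\dots,v_m)$, hence a basis of $E_Y(1)=E_{\fourIdx{t}{}{}{}{X}}(\lambda)$. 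The matrix of $f_k$ has a single nonzero row, namely row $m+1$ with entry $\mu_{pk}$ in the $p$-th $x$-column (because $\mu_{pk}\lambda^{r}v_{m+1}=\mu_{pk}\beta^{r}(v_{m+1})$), so writing $\fourIdx{t}{}{}{}{U_{1m}}=\sum_k c_k(\mu_{1k},\dots,\mu_{mk})^{T}$ and subtracting $\sum_k c_k f_k$ from $T$ cancels $U_{1m}$ without disturbing anything else. After these two adjustments the matrix of $T$ has the stated form.

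For the converse, any $T$ of that form clearly satisfies the cocycle conditions (which collapse to $\fourIdx{t}{}{}{}{X}A_mX=\lambda A_m$ and $X\,\fourIdx{t}{}{}{}{V_{1m}}=\fourIdx{t}{}{}{}{V_{1m}}$, with the symmetry of $A_m$ understood from the preceding theorem), and it represents a nonzero cohomology class unless $T=0$, since every nonzero element of $B^{1}_{r,\beta}$ carries at least one nonzero entry in the middle $y$-column block (via $g$) or in the $U_{1m}$-row (via the $f_k$'s), both of which are forbidden in the normal form. The main obstacle I expect is the eigenspace identification $E_{\fourIdx{t}{}{}{}{X}}(\lambda)=E_Y(1)$ in the second step: matching the admissible $U_{1m}$'s forced by the cocycle condition exactly with the subspace reachable through coboundaries is the precise point where the $\lambda$-symplectic structure enters and makes the normalization possible.
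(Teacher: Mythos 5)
The paper states this theorem without any proof, so there is nothing to compare against line by line; judged on its own, your normalization argument is essentially correct and is the natural way to establish the result. Your two reductions are sound: subtracting $b\,g$ kills the $bI_m$ block and the $b$ entry without touching $A_m$, $U_{1m}$, $V_{1m}$; and the key identification $E_{Y}(1)=\ker({}^{t}X-\lambda I)$, coming from ${}^{t}X\,Y=\lambda I$, shows that the row vectors realizable by $\sum_k c_k f_k$ in the $U_{1m}$ slot are exactly those permitted by the cocycle condition ${}^{t}X\,{}^{t}U_{1m}=\lambda\,{}^{t}U_{1m}$, so that block can always be cancelled and $B^{1}_{r,\beta}$ meets the space of normal forms only in $0$. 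You are also right that this is where the $\lambda$-symplectic relation does the real work. Three caveats worth recording. First, the statement as literally written (``$T$ is a non-trivial cocycle iff its matrix has this form'') cannot be true, since adding a coboundary to a normal-form representative yields a non-trivial cocycle not of that form; your reading --- each class in $H^{1}_{r,\beta}$ has a unique representative of this shape, and a nonzero normal form is never a coboundary --- is the only tenable one and should be stated explicitly. Second, since neither $g$ nor the $f_k$ touches the $A_m$ block, the symmetry ${}^{t}A_m=A_m$ from the preceding cocycle theorem survives the normalization and ought to remain among the conditions in the conclusion (its omission affects the dimension count in the following theorem). Third, your identification gives the number of $f_k$'s as $\dim E(\lambda)$ rather than the $\dim E(1)$ appearing in the paper's corollary on $\dim B^{1}_{r,\beta}$; your computation is the internally consistent one, and the discrepancy lies with the paper, not with your argument. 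Finally, the degenerate case $\lambda=1$ (where $v_{m+1}$ is also $\beta$-fixed and some $f_k$ could vanish) should be excluded or treated separately, consistent with the standing hypothesis of Corollary \ref{hm}.
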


\begin{thm}	
	Let $ I=\{\lambda_i;\,\chi(\lambda_i)= \chi(\frac{\lambda}{\lambda_i})=0 \}=\{\lambda_1,\cdots,\lambda_p,\frac{\lambda}{\lambda_1}
	,\cdots,\frac{\lambda}{\lambda_p}    \}$  ($p\in\N$ and $card(I)=2p $) . Then, 
	\[ \dim(H^1_{r,\beta}\left( \mathcal{H}^m_{\lambda},F)\right)=\dim(E(1))+\displaystyle \sum_{i=1}^{p}
	dim(E(\lambda_i))\dim(E(\frac{\lambda}{\lambda_i})).\]
\end{thm}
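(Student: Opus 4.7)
The strategy is to use the canonical form for non-trivial $1$-Hom-cocycles from the preceding theorem, then count parameters via the eigenspace decomposition of $X$.

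By the previous characterization, every class in $H^{1}_{r,\beta}(\mathcal{H}^{m}_{\lambda},F)$ has a unique representative whose matrix (with respect to the bases of \eqref{ext} and $(\beta^{r}(v_{1}),\dots,\beta^{r}(v_{m+2}))$) takes the shape
$$T=\begin{bmatrix} A_{m} & 0_{m} & 0 \\ 0_{1m} & V_{1m} & 0 \\ 0_{1m} & 0_{1m} & 0 \end{bmatrix},$$
with $A_{m}$ symmetric, ${}^{t}X\,A_{m}\,X=\lambda A_{m}$, and $X\,{}^{t}V_{1m}={}^{t}V_{1m}$. The last condition says ${}^{t}V_{1m}\in E(1)$, giving $\dim E(1)$ free parameters and accounting for the first summand of the formula. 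It remains to determine the dimension of $\mathcal{A}:=\{A_{m}\text{ symmetric}: {}^{t}X\,A_{m}\,X=\lambda A_{m}\}$.

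Identify $A_{m}$ with the symmetric bilinear form $\phi(u,v)={}^{t}u\,A_{m}v$ on $E$; the constraint rewrites as the functional equation
$$\phi(Xu,Xv)=\lambda\,\phi(u,v),\qquad \forall\, u,v\in E.$$
Decomposing $E=\bigoplus_{j}W(\mu_{j})$ into generalized eigenspaces of $X$, the key step is to establish that $\phi$ vanishes on $W(\mu_{j})\times W(\mu_{k})$ whenever $\mu_{j}\mu_{k}\neq\lambda$, and that on a paired block $W(\lambda_{i})\times W(\lambda/\lambda_{i})$ the form is entirely determined by its restriction to the true eigenspaces $E(\lambda_{i})\times E(\lambda/\lambda_{i})$. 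Both follow by induction on the Jordan index: on $W(\mu)\times W(\mu')$, write $X=\mu I+N$ and $X=\mu' I+N'$ respectively, expand $\phi(Xu,Xv)$ by bilinearity, and use $\phi(Xu,Xv)=\lambda\phi(u,v)$ to cascade the constraints from top-of-block vectors down to the eigenvectors.

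Under the hypothesis $\lvert I\rvert=2p$, the relevant eigenvalues partition into $p$ disjoint unordered pairs $\{\lambda_{i},\lambda/\lambda_{i}\}$ with $\lambda_{i}\neq\lambda/\lambda_{i}$, since $\mu^{2}=\lambda$ is excluded by parity. By symmetry of $\phi$, each pair supplies exactly one free bilinear pairing $E(\lambda_{i})\times E(\lambda/\lambda_{i})\to\mathbb{C}$, of dimension $\dim E(\lambda_{i})\dim E(\lambda/\lambda_{i})$, with the reverse block $E(\lambda/\lambda_{i})\times E(\lambda_{i})$ fixed as its transpose. Summing yields $\dim\mathcal{A}=\sum_{i=1}^{p}\dim E(\lambda_{i})\dim E(\lambda/\lambda_{i})$, and combining with the $V$-contribution gives the claim. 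The main obstacle is the induction on generalized eigenspaces, which must be arranged so that the only surviving freedom sits on the honest eigenspaces; this rigidity is what makes $\dim E(\mu)$ (rather than $\dim W(\mu)$) appear in the final formula.
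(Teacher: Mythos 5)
The paper states this theorem without any proof, so there is no argument of the author's to compare yours against; judged on its own terms, your plan is sound except at the one step you yourself flag as ``the main obstacle,'' and that step fails. Everything before it is fine: the passage to the canonical form of non-trivial $1$-Hom-cocycles, the identification of the $V_{1m}$-contribution with $\dim E(1)$, the translation of $\,^{t}X\,A_m\,X=\lambda A_m$ into $\phi(Xu,Xv)=\lambda\,\phi(u,v)$, and the vanishing of $\phi$ on $W(\mu_j)\times W(\mu_k)$ when $\mu_j\mu_k\neq\lambda$ (there the operator $\,^{t}X\otimes\,^{t}X-\lambda$ is invertible on the corresponding block of bilinear forms).

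The claim that on a paired block $W(\lambda_i)\times W(\lambda/\lambda_i)$ the form is \emph{entirely determined by its restriction to the honest eigenspaces} is false whenever both generalized eigenspaces carry Jordan blocks of size at least two, so the induction you defer to cannot be closed. Take $\mu\mu'=\lambda$, $\mu\neq\mu'$, with $Xe_1=\mu e_1$, $Xe_2=e_1+\mu e_2$ on $W(\mu)$ and $Xf_1=\mu' f_1$, $Xf_2=f_1+\mu' f_2$ on $W(\mu')$. The four equations $\phi(Xe_i,Xf_j)=\lambda\,\phi(e_i,f_j)$ reduce to exactly $\phi(e_1,f_1)=0$ and $\mu'\phi(e_1,f_2)+\mu\,\phi(e_2,f_1)=0$, leaving $\phi(e_1,f_2)$ and $\phi(e_2,f_2)$ as free parameters; this block therefore contributes $2$ to $\dim\mathcal{A}$ while $\dim E(\mu)\dim E(\mu')=1$, and moreover $\phi$ can be nonzero while vanishing identically on $E(\mu)\times E(\mu')$. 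In general the pair $(\lambda_i,\lambda/\lambda_i)$ contributes $\sum_{j,k}\min(a_j,b_k)$, where the $a_j$ and $b_k$ are the Jordan block sizes of $X$ on the two generalized eigenspaces, and this equals $\dim E(\lambda_i)\dim E(\lambda/\lambda_i)$ only when every such pair of blocks contains one of size one. So your argument (and, as far as I can see, the formula in the statement itself) is valid only under an additional hypothesis, e.g.\ that $X$ is diagonalizable on the generalized eigenspaces indexed by $I$; you should either add that hypothesis explicitly or replace the eigenspace count by the Jordan-theoretic one.
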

\section{The trivial representation of Heisenberg  Hom-Lie algebras}
Let $ (v_1,\dots,v_{n}) $ be  a basis for a vector space $ T $.
We assume that the representation $(T,[\cdot,\cdot]_{T},\beta)$ of the 
Heisenberg  Hom-Lie algebra $\mathcal{H}^m_\lambda$
 is trivial.
Since $ [\cdot,\cdot] _T = $ 0, the operator defined in \eqref{def cobbb} becomes
\begin{align*}
& \delta^{k}_T(f)(x_{0},\dots,x_{k}) =& \sum_{0\leq s < t\leq k}(-1)^{t} 
f\Big(\alpha(x_{0}),\dots,\alpha(x_{s-1}),[x_{s},x_{t}],\alpha(x_{s+1}),\dots,\widehat{x_{t}},\dots,\alpha(x_{k})\Big). \nonumber
\end{align*}
A straightforward calculation shows that \[ \delta^2\circ\delta^1(f)=0,\qquad \forall f\in C^1(\mathcal{H}^m_\lambda,T) \]
\begin{prop}
	\[ B^2(\mathcal{H}^m_\lambda,T) =\bigoplus_{1\leq k\leq n}<f_{k}>,\ where\]
	$ f_k(x_{i},y_j)=\delta_{i,j}v_k,\quad f_k(x_i,x_j)=f_k(y_i,y_j)=f_k(z,x_i)=f_k(z,y_i)=0$\ 
\end{prop}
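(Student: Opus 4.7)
The plan is a direct computation using the drastically simplified coboundary formula available in the trivial case. Because $[\cdot,\cdot]_T=0$, the second sum in the definition of $\delta^k_r$ vanishes, so for a $1$-cochain $f:\mathcal{H}^m_\lambda\to T$ we simply get
\[
\delta^1(f)(x,y)=-f([x,y])
\]
for all $x,y\in\mathcal{H}^m_\lambda$. This reduces the whole problem to understanding what $f([\cdot,\cdot])$ can look like on basis pairs.

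Next I would evaluate $\delta^1(f)$ on all pairs of basis elements from the adapted basis $(x_1,\dots,x_m,y_1,\dots,y_m,z)$ given by Corollary~\ref{hm}. Using the structure relations $[x_i,y_j]=\delta_{ij}z$ and $[x_i,x_j]=[y_i,y_j]=[x_i,z]=[y_i,z]=0$, the only nonvanishing values are
\[
\delta^1(f)(x_i,y_j)=-\delta_{ij}\,f(z),
\]
so $\delta^1(f)$ is determined entirely by the single vector $f(z)\in T$. Expanding $f(z)=\sum_{k=1}^n c_k v_k$ in the basis of $T$ and comparing with the definition of $f_k$ in the statement, one reads off
\[
\delta^1(f)=-\sum_{k=1}^n c_k f_k,
\]
which gives the inclusion $B^2(\mathcal{H}^m_\lambda,T)\subseteq\bigoplus_{k=1}^n\langle f_k\rangle$.

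For the reverse inclusion I would exhibit, for each $k$, a concrete $1$-cochain $g_k$ with $\delta^1(g_k)=-f_k$: set $g_k(z)=v_k$ and $g_k(x_i)=g_k(y_i)=0$ for all $i$. The simplified formula immediately gives $\delta^1(g_k)=-f_k$. The only point that needs care is the Hom-cochain compatibility $\beta\circ g_k=g_k\circ\alpha$; since $\alpha(z)=\lambda z$ and the derived ideal is central, one checks this reduces to the already-imposed condition that $v_k$ behave compatibly with $\beta$ on the $\lambda$-eigenspace, so no obstruction arises in the trivial-representation setting used here. Finally, linear independence of the $f_k$ is obvious because they take linearly independent values $v_k$ at the single pair $(x_1,y_1)$, so the sum is direct.

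The main (and only) subtlety I would take care to state cleanly is this Hom-compatibility check for the $g_k$'s; everything else is bookkeeping with the bracket table and the observation that, in the trivial representation, $\delta^1$ factors through the projection $f\mapsto f|_{[\mathcal{H},\mathcal{H}]}=f(z)$.
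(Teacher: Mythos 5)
Your computation is correct and is essentially the intended argument: the paper states this proposition without giving a proof, and the direct verification you describe (in the trivial representation $\delta^1(f)(u,v)=-f([u,v])$, so $\delta^1(f)$ is determined by $f(z)$ alone, giving both inclusions at once) is exactly what the statement rests on. The one place where you hedge, the compatibility $\beta\circ g_k=g_k\circ\alpha$, is not actually needed here: the paper writes $B^2(\mathcal{H}^m_\lambda,T)$ without the $\alpha,\beta$ subscript (contrast $B^1_{r,\beta}$ in the faithful-representation section), so the coboundaries are taken over all $1$-cochains and no eigenvector condition on $v_k$ arises; had the Hom-compatible complex been intended, your ``already-imposed condition'' would in fact be a genuine restriction, since $\beta$ is arbitrary for a trivial representation and need not have $\lambda$ as an eigenvalue.
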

\begin{prop}
	Let $ f:\mathcal{H}^m_\lambda\times\mathcal{H}^m_\lambda \to T $ be a  bilinear, skew-symmetric map. Then $f$ is a $ 2 $-cocycle on $T$ if and only if  $ f(z,u) =0\quad \forall u\in \mathcal{H}^m_\lambda$.
\end{prop}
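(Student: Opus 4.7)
The plan is to exploit two structural facts: for the trivial representation the coboundary reduces to a sum of $f$-values on brackets, and every bracket in $\mathcal{H}^m_\lambda$ lies in the one-dimensional derived ideal $\langle z\rangle$. Concretely, specializing the formula in the excerpt to a $2$-cochain gives
\[
\delta^2 f(x_0,x_1,x_2)= -f([x_0,x_1],\alpha(x_2)) + f([x_0,x_2],\alpha(x_1)) + f(\alpha(x_0),[x_1,x_2]).
\]

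For the ``if'' direction, assume $f(z,u)=0$ for all $u\in\mathcal{H}^m_\lambda$; by skew-symmetry $f(u,z)=0$ as well. Since each bracket appearing on the right is a scalar multiple of $z$, bilinearity of $f$ makes every one of the three terms vanish, so $\delta^2 f=0$.

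For the ``only if'' direction, assume $f$ is a $2$-cocycle. I would work in the standard basis $(x_1,\ldots,x_m,y_1,\ldots,y_m,z)$ provided by Corollary~\ref{hm} and test the cocycle identity on well-chosen triples $(x_i,y_i,u)$. For $u=z$ one has $f(z,z)=0$ by skew-symmetry. For $u=x_k$ or $u=y_k$, pick any index $i\neq k$: then $[x_i,u]=0$ and $[y_i,u]=0$, so two of the three terms in $\delta^2 f(x_i,y_i,u)=0$ drop out and one is left with
\[
f(z,\alpha(u))=0,
\]
since $[x_i,y_i]=z$. Ranging over basis vectors and invoking that $\mathcal{H}^m_\lambda$ is regular (every multiplicative Heisenberg Hom-Lie algebra is regular, by the corollary following the structure theorem), $\alpha$ is an automorphism and we conclude $f(z,\cdot)=0$ on the whole algebra.

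The main obstacle is simply choosing the right test triple so that the two parasitic brackets vanish and the cocycle identity isolates $f(z,\alpha(u))$. This is what forces the choice $i\neq k$, which in turn presupposes $m\ge 2$; the three-dimensional case $m=1$ is genuinely degenerate because every skew-symmetric $f$ is already a cocycle there (the identity collapses to $0=0$), so the statement must either implicitly assume $m\ge 2$ or be interpreted accordingly in that case. Beyond that, the argument is a routine unpacking of the coboundary formula combined with $[\mathcal{H}^m_\lambda,\mathcal{H}^m_\lambda]=\langle z\rangle$.
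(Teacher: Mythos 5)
The paper states this proposition without any proof, so there is nothing to compare against; judged on its own, your argument is correct and is surely the intended one. The specialization of \eqref{def cobbb} to the trivial action is exactly as you write, the ``if'' direction follows from $[\mathcal{H}^m_\lambda,\mathcal{H}^m_\lambda]=\langle z\rangle$ together with bilinearity and skew-symmetry, and the ``only if'' direction from testing $\delta^2f(x_i,y_i,u)=-f(z,\alpha(u))$ on triples with $[x_i,u]=[y_i,u]=0$ and then using regularity of $\alpha$ to pass from $f(z,\alpha(u))=0$ to $f(z,\cdot)=0$. Your caveat about $m=1$ is not a defect of your proof but a genuine gap in the statement itself: for $\mathcal{H}^1_\lambda$ the only nontrivial evaluation is $\delta^2f(x_1,y_1,z)=-\lambda f(z,z)=0$, so \emph{every} skew-symmetric bilinear $f$ is a $2$-cocycle, $\dim Z^2(\mathcal{H}^1_\lambda,T)=3\dim T$, and both the ``only if'' direction and the subsequent corollary $\dim Z^2=(2m^2-m)\dim T$ fail; the proposition should carry the hypothesis $m\ge 2$. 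The only stylistic remark is that you could avoid invoking regularity by observing directly that surjectivity of $\alpha$ suffices, but since the paper's standing convention makes every Heisenberg Hom-Lie algebra regular, this costs nothing.
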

\begin{cor}
\[ \dim Z^2(\mathcal{H}^m_\lambda,T)=(2m^2-m)\dim(T) \]
\end{cor}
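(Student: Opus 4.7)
The plan is to read the corollary as a pure dimension count, using the preceding proposition to pin down $Z^{2}(\mathcal{H}^m_{\lambda},T)$ as a subspace of the space of all bilinear, skew-symmetric maps $\mathcal{H}^m_{\lambda}\times \mathcal{H}^m_{\lambda}\to T$ cut out by a very simple linear condition. Fix the basis $(x_1,\dots,x_m,y_1,\dots,y_m,z)$ of $\mathcal{H}^m_{\lambda}$ given in Corollary \ref{hm} and let $n=\dim T$.

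First I would compute the ambient dimension. A bilinear, skew-symmetric map $f:\mathcal{H}^m_{\lambda}\times \mathcal{H}^m_{\lambda}\to T$ is determined freely by the values $f(e_i,e_j)\in T$ for $i<j$, where $(e_1,\dots,e_{2m+1})$ ranges over the basis above. Since $\dim \mathcal{H}^m_{\lambda}=2m+1$, the number of such unordered pairs is $\binom{2m+1}{2}=m(2m+1)$, and each value is an arbitrary element of $T$. Hence the ambient space has dimension $m(2m+1)\,n$.

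Second I would impose the cocycle condition. By the preceding proposition, $f\in Z^{2}(\mathcal{H}^m_{\lambda},T)$ if and only if $f(z,u)=0$ for every $u\in \mathcal{H}^m_{\lambda}$. Skew-symmetry forces $f(z,z)=0$ automatically, so the genuinely new constraints are the $2m$ vanishing conditions
\[
f(z,x_i)=0,\quad f(z,y_i)=0,\qquad i=1,\dots,m.
\]
Each such condition is a vanishing equation on an element of $T$, hence contributes $n$ independent scalar constraints, and they involve pairwise disjoint basis pairs $\{z,x_i\}$, $\{z,y_i\}$, so the $2m\cdot n$ scalar constraints are linearly independent.

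Finally I would subtract, obtaining
\[
\dim Z^{2}(\mathcal{H}^m_{\lambda},T)=m(2m+1)\,n-2m\,n=m(2m-1)\,n=(2m^2-m)\dim T,
\]
which is the claim. There is no real obstacle here; the only point worth checking is that the $2m$ linear conditions are independent and not implied by skew-symmetry, and this is immediate because $z$ is disjoint from the $x_i,y_i$ in the basis.
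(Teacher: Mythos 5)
Your count is correct and is essentially the argument the paper intends: starting from the preceding proposition, the ambient space of skew-symmetric bilinear maps has dimension $\binom{2m+1}{2}\dim T=m(2m+1)\dim T$, and the $2m$ conditions $f(z,x_i)=f(z,y_i)=0$ remove $2m\dim T$ independent dimensions, leaving $(2m^2-m)\dim T$. The paper gives no further detail, so your write-up fills in exactly the intended computation.
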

\begin{thm}
		Let $ f:\mathcal{H}^m_\lambda\times\mathcal{H}^m_\lambda \to T $ be a  bilinear, skew-symmetric map. Then $f$ is a no trivial $ 2 $-cocycle on $T$ if and only if  $ f(z,u) =0\quad \forall u\in \mathcal{H}^m_\lambda$ and it satisfies one of the following conditions.
		\begin{enumerate}[(i)]
						\item  There exists $ i,\, j\in \{1,\cdots,m\} $ such that $f(x_i,y_i) \neq f(x_j,y_j) $.
						\item $ f(E,E)\neq 0. $
							\item $ f(E^*,E^*)\neq 0. $
		\end{enumerate}
\end{thm}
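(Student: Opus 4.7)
The plan is to combine the two results immediately preceding this theorem: the proposition characterizing $2$-cocycles by the single condition $f(z,u)=0$, and the earlier proposition giving an explicit basis for $B^2(\mathcal{H}^m_\lambda, T)$. Since ``non trivial'' means ``a cocycle that is not a coboundary,'' the statement should reduce to a clean bookkeeping: keep the cocycle half (which forces $f(z,u)=0$), and then translate ``$f\notin B^2$'' into conditions on the values of $f$ on the symplectic basis.

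First I would dispose of the cocycle part. By the preceding proposition, a bilinear skew-symmetric map $f$ is a $2$-cocycle if and only if $f(z,u)=0$ for every $u\in \mathcal{H}^m_\lambda$. So throughout the argument I may assume this vanishing, and it appears on both sides of the equivalence.

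Next I would describe the trivial cocycles explicitly. By the earlier proposition, $B^2(\mathcal{H}^m_\lambda, T)=\bigoplus_{1\leq k\leq n}\langle f_k\rangle$ where $f_k(x_i,y_j)=\delta_{i,j}v_k$ and $f_k$ vanishes on all other basis pairs. Writing a general element of $B^2$ as $\sum_k\alpha_k f_k$ and setting $w=\sum_k\alpha_k v_k\in T$, a coboundary is precisely a bilinear skew-symmetric map $g$ such that there exists a single $w\in T$ with $g(x_i,y_j)=\delta_{i,j}w$ for all $i,j\in\{1,\dots,m\}$, while $g|_{E\times E}=0$, $g|_{E^*\times E^*}=0$, and $g(z,\cdot)=0$. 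In other words, a cocycle is a coboundary exactly when all three of the following hold simultaneously: $f(x_i,y_i)=f(x_j,y_j)$ for all $i,j$, $f(E,E)=0$, and $f(E^*,E^*)=0$.

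The theorem now follows by taking the negation. A cocycle $f$ fails to be a coboundary iff at least one of those three equalities is broken, which is exactly the disjunction (i), (ii), (iii). Conversely, given a $2$-cocycle satisfying one of (i)--(iii), the above description of $B^2$ directly forbids $f$ from lying in $B^2$. The main obstacle is essentially cosmetic—one only needs to be careful that the constant value $w$ in the coboundary description is shared by all pairs $(x_i,y_i)$, which is why condition (i) is phrased as the existence of $i,j$ with $f(x_i,y_i)\neq f(x_j,y_j)$ rather than simply $f(x_i,y_i)\neq 0$; once this is noted, no further calculation is required since the cocycle condition and the structure of $B^2$ have already been established.
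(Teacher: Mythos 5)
Your overall strategy is the natural one, and in fact the paper offers no proof of this theorem at all, so there is nothing to compare against except the two preceding propositions you invoke; combining the characterization of $Z^2$ by $f(z,u)=0$ with the explicit basis of $B^2$ and then negating membership in $B^2$ is clearly the intended argument. However, there is a genuine gap in your negation step. From the basis $\{f_k\}$ you correctly derive that $f\in B^2$ if and only if there is a single $w\in T$ with $f(x_i,y_j)=\delta_{ij}w$ for all $i,j$, together with $f(E,E)=f(E^*,E^*)=0$ and $f(z,\cdot)=0$. But the condition ``$f(x_i,y_j)=\delta_{ij}w$ for all $i,j$'' splits into two independent requirements: the diagonal values $f(x_i,y_i)$ all coincide, \emph{and} the off-diagonal values $f(x_i,y_j)$ with $i\neq j$ all vanish. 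When you pass to the negation you retain only the first of these (your condition (i)) and silently drop the second, so your list of disjuncts is incomplete. Concretely, for $m\geq 2$ the skew-symmetric map defined by $f(x_1,y_2)=v\neq 0$ and zero on all other basis pairs is a $2$-cocycle (it kills $z$), is not a coboundary (every element of $B^2$ vanishes on $(x_1,y_2)$), yet satisfies none of (i), (ii), (iii). Hence the equivalence you assert does not follow from your own, correct, description of $B^2$; a fourth disjunct ``there exist $i\neq j$ with $f(x_i,y_j)\neq 0$'' is required. Note that this is not merely a slip in your write-up: it shows the theorem as stated fails for $m\geq 2$ (it is fine for $m=1$, where there are no off-diagonal pairs), while the corrected four-condition version is the one consistent with the subsequent corollary $\dim H^2(\mathcal{H}^m_\lambda,T)=(2m^2-m-1)\dim T$, since $\dim B^2=\dim T$ inside $\dim Z^2=(2m^2-m)\dim T$. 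You should either add the missing condition or restrict the statement to $m=1$.
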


\begin{cor}
\[ \dim H^2(\mathcal{H}^m_\lambda,T)=(2m^2-m-1)\dim(T) .\]
\end{cor}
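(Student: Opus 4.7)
The plan is to compute $\dim H^2(\mathcal{H}^m_\lambda,T)$ directly from the relation $\dim H^2 = \dim Z^2 - \dim B^2$, both of which are supplied by the immediately preceding statements. First I would invoke the corollary $\dim Z^2(\mathcal{H}^m_\lambda,T)=(2m^2-m)\dim(T)$, which already encodes the cocycle condition $f(z,u)=0$ together with the skew-symmetry and bilinearity of $f$; indeed a 2-cochain is free on the $\binom{2m}{2}=2m^2-m$ pairs drawn from $(x_1,\dots,x_m,y_1,\dots,y_m)$, each pair carrying $\dim T$ many independent values.

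Next I would read off $\dim B^2$ from the proposition asserting $B^2(\mathcal{H}^m_\lambda,T)=\bigoplus_{1\le k\le n}\langle f_k\rangle$ with $n=\dim T$. The $f_k$ are supported on distinct basis vectors $v_k\in T$ (via $f_k(x_i,y_i)=v_k$), so they are manifestly linearly independent, which yields $\dim B^2 = \dim T$. I would also briefly check that $B^2\subset Z^2$: each $f_k$ satisfies $f_k(z,u)=0$ by the explicit formula, so the cocycle condition from the previous proposition holds; alternatively one can cite $\delta^2\circ\delta^1=0$, which was already recorded.

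Putting the two computations together gives
\[\dim H^2(\mathcal{H}^m_\lambda,T) = (2m^2-m)\dim T - \dim T = (2m^2-m-1)\dim T,\]
which is the claim. The main (and only) step of substance has already been carried out in the preceding corollary and proposition, so no genuine obstacle remains; the proof is a one-line bookkeeping step comparing the dimensions of $Z^2$ and $B^2$, and it does not require any further use of the Hom-structure $\alpha$, the symplectic form, or the shape of the matrix $S$ in \eqref{ext}.
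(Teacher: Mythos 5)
Your proposal is correct and matches the paper's (implicit) argument exactly: the corollary follows by subtracting $\dim B^2(\mathcal{H}^m_\lambda,T)=\dim T$, read off from the proposition giving $B^2=\bigoplus_{1\le k\le n}\langle f_k\rangle$, from $\dim Z^2(\mathcal{H}^m_\lambda,T)=(2m^2-m)\dim T$ established in the preceding corollary. The paper offers no further detail, and your bookkeeping (including the sanity check that each $f_k$ vanishes on $z$ and hence lies in $Z^2$) is all that is needed.
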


\section{ The Adjoint representation of Heisenberg  Hom-Lie algebras} 
Associated to the $ \alpha^r $-hom-adjoint representation, the operator  $\delta^{k}:C^{k}(\mathcal{H}^m_\lambda ,\ \mathcal{H}^m_\lambda )\rightarrow C^{k+1}(\mathcal{H}^m_\lambda \ \mathcal{H}^m_\lambda )$  is given by 
\begin{align*}
&\delta^{k}_r(f)(x_{0},\dots,x_{k})=\nonumber \\&\sum_{0\leq s < t\leq k}(-1)^{t}
f\Big(\alpha(x_{0}),\dots,\alpha(x_{s-1}),[x_{s},x_{t}],\alpha(x_{s+1}),\dots,\widehat{x_{t}},\dots,\alpha(x_{k})\Big) \nonumber \\
&+\sum_{s=0}^{k}(-1)^{s}\Bigg[\alpha^{k+r-1}(x_{s}), f\Big(x_{0},\dots,\widehat{x_{s}},\dots,x_{k}\Big)\Bigg],\label{def cobbb}
\end{align*}
by straightforward computations, we have
\[\delta^2_r\circ\delta^1_r(f) =0\qquad \forall f\in C^m(\mathcal{H}^m_\lambda,\mathcal{H}^m_\lambda) \]
Associated to the representation $ad_r$, we obtain the complex $ (C^k(\mathcal{H}^m_\lambda,\mathcal{H}^m_\lambda),\delta^{k}_r)_{0\leq k\leq 2} $. denote the set of closed $k$-cochains by 
$ Z^k(\mathcal{H}^m_\lambda,ad_r) $ and the set of exact $k$-cochains by $ B_k(\mathcal{H}^m_\lambda,ad_r) $. Denote the corresponding  cohomology by \[H^ k(\mathcal{H}^m_\lambda,ad_r)=Z^k(\mathcal{H}^m_\lambda,ad_r)/B^k(\mathcal{H}^m_\lambda,ad_r) .\]
\subsection{Second cohomology group of  $ \mathcal{H}^1_\lambda$ }
Any no trivial $2$-cocycle with The adjoint representation of $ \mathcal{H}^1_\lambda $ is given by bilinear map
 $f: \mathcal{H}^1_\lambda\times\mathcal{H}^1_\lambda\to  \mathcal{H}^1_\lambda$ defined, with respect to the  basis $ (\alpha^{r+1}(x),\alpha^{r+1}(y),\alpha^{r+1}(z)) $, by  
\begin{align*}
f(x,y)&=0\\
f(x,z)&=a\,\alpha^{r+1}(x)+b\,\alpha^{r+1}(y)+c\, z\\
f(y,z)&=d\,\alpha^{r+1}(x)-a\,\alpha^{r+1}(y)+e\, z
\end{align*} 
Where $ a,b,c, d, e $ are parameters in $ \C. $
Therefore $ \dim H^2(\mathcal{H}^1_{\lambda},ad_r) =5.$\\

We denote  $ m(\nu) $ 
the
multiplicity of  a root $ \nu $
of the
characteristic
polynomial $ \chi $ of $ \alpha $ ( if $ \chi(\nu) \neq 0 $ we write $ m(\nu)=0 $).
The following tables gives the possibilities for $ \dim H^2_{h}(\mathcal{H}^1_{\lambda},ad_r)   $\\

 \begin{tabular}{|c|c|}
 	\hline
 	Matrix $ S $ & Dimension \\
 	
 	\hline
 	&\\

 	$\begin{pmatrix}
 	1&0&0\\
 	0&\lambda&0\\
 	0&0&\lambda
 	\end{pmatrix},$
 	where  $ \lambda\notin\{-1,1\}$& $\dim H^2_{h}(\mathcal{H}^1_{\lambda},ad_r) =2$
 
 	\\
 	& \\
 	\hline
 	&\\
 	

 $\begin{pmatrix}
 \lambda&0&0\\
 0&1&0\\
 0&0&\lambda
 \end{pmatrix},$
 where  $ \lambda\notin\{-1,1\}$& $\dim H^2_{h}(\mathcal{H}^1_{\lambda},ad_r) =2$
 
 \\
 & \\
 \hline
 &\\
 
 	
 	$\begin{pmatrix}
 	a&0&0\\
 	0&\frac{\lambda}{a}&0\\
 	0&0&\lambda
 	\end{pmatrix},$
 	where  $ a\in\C^*$& $\dim H^2_{h}(\mathcal{H}^1_{\lambda},ad_r) =m(1)+m(-1)$
 	
 	\\
 	& \\
 	\hline
 	&\\
 	
 	$ \begin{pmatrix}
 	1&1&0\\
 	0&1&0\\
 	0&0&1
 	\end{pmatrix},$

 	&$\dim H^2_{h}(\mathcal{H}^1_{\lambda},ad_r)= 2$  \\
 	&\\
 	
 	\hline
 	&\\
 	
 	$ \begin{pmatrix}
 	a&1&0\\
 	0&a&0\\
 	0&0&\lambda
 	\end{pmatrix},$ where  $ a^2=\lambda $ and $ a\neq 1 $
 	&$\dim H^2_{h}(\mathcal{H}^1_{\lambda},ad_r) = 1$  \\
 	&\\
 	
 	\hline
 \end{tabular}

 \subsection{Second cohomology group of  $ \mathcal{H}^2_\lambda$ }
 Any no trivial $2$-cocycle with The adjoint representation of $ \mathcal{H}^2_\lambda $ is given by bilinear map
 $f\colon \mathcal{H}^2_\lambda\times\mathcal{H}^1_\lambda\to  \mathcal{H}^2_\lambda$ defined, with respect to the  basis $ (\alpha^{r+1}(x_1),\alpha^{r+1}(x_2),\alpha^{r+1}(y_1),\alpha^{r+1}(y_2),\alpha^{r+1}(z)) $, by  
 \begin{gather*}
 \varphi(x_1,x_2)
 = a_{1,(1,2)}\alpha^{r+1}(x_1)
 +a_{2,(1,2)}\alpha^{r+1}(x_2)
 + b_{1,(1,2)}\alpha^{r+1}(y_1)
 + b_{2,(1,2)}\alpha^{r+1}(y_2)\\
 \varphi(x_1,y_2)
 = c_{1,(1,2)}\alpha^{r+1}(x_1)
 +c_{2,(1,2)}\alpha^{r+1}(x_2)
 + d_{1,(1,2)}\alpha^{r+1}(y_1)
 + d_{2,(1,2)}\alpha^{r+1}(y_2)\\
 \varphi(x_2,y_1)= c_{1,(2,1)}\alpha^{r+1}(x_1)
 +c_{2,(2,1)}\alpha^{r+1}(x_2)
 + d_{1,(2,1)}\alpha^{r+1}(y_1)
 + d_{2,(2,1)}\alpha^{r+1}(y_2)\\
 \varphi(y_1,y_2)= e_{1,(1,2)}\alpha^{r+1}(x_1)
 +e_{2,(1,2)}\alpha^{r+1}(x_2)
 + f_{1,(1,2)}\alpha^{r+1}(y_1)
 +f_{2,(1,2)}\alpha^{r+1}(y_2)\\
 \varphi(x_1,y_1)=-\left( f_{2,(1,2)}
 +c_{1,(2,2)}-c_{2,(2,1)}-c_{1,(1,1)}
 \right)
 \alpha^{r+1}(x_1)
 +\left( f_{1,(1,2)}
 +c_{1,(1,2)}
 \right)	\alpha^{r+1}(x_2)\\
 +\left(- d_{1,(2,2)}
 +a_{2,(1,2)}+d_{2,(1,2)}+ d_{1,(1,1)}
 \right)	\alpha^{r+1}(y_1)
 +\left(d_{1,(2,1)}
 -a_{1,(1,2)}
 \right) \alpha^{r+1}(y_2)\\
 \varphi(x_2,y_2)
 =\left(c_{2,(2,1)}- f_{2,(1,2)}
  \right)\alpha^{r+1}(x_1)
 -\left(
 -c_{2,(2,2)}- f_{1,(1,2)}
 -c_{1,(1,2)}+c_{2,(1,1)}\right)
 \alpha^{r+1}(x_2)\\
 +\left(
 a_{2,(1,2)}+d_{2,(1,2)}\right)
 \alpha^{r+1}(y_1)
 +\left( d_{1,(2,1)}
 -a_{1,(1,2)}-d_{2,(1,1)}+ d_{2,(2,2)}\right)\alpha^{r+1}(y_2).
 \end{gather*}
 Therefore $ \dim H^2(\mathcal{H}^2_{\lambda},ad_r) =20.$\\
Assume that $ X $ has two complex eigenvalues $ \mu, $ $ \mu' $; Then $S$ takes the form 
\begin{multicols}{2}
	\begin{enumerate}[(i)]
		\item \label{hom}	$ \begin{pmatrix}
		\mu&0&0&0\\
		0&\mu'&0&0\\
		0&0&\frac{\lambda}{\mu}&0\\
		0&0&0&\frac{\lambda}{\mu'}
		\end{pmatrix}$  \qquad  \qquad  or 
		\item  	$ \begin{pmatrix}
		\mu&-\frac{\mu^2}{\lambda}&0&0\\
		0&\mu&0&0\\
		0&0&\frac{\lambda}{\mu}&0\\
		0&0&1&\frac{\lambda}{\mu}
		\end{pmatrix}$ $ (\mu'=\mu). $\label{hom2}
	\end{enumerate}
\end{multicols}
The following table gives the possibilities for 
$ \dim H^2_{h}(\mathcal{H}^2_{\lambda},ad_r) $ according  to the matrix of case \eqref{hom}

\begin{tabular}{|c|c|}
	\hline
	Spectrum $ \sigma(X) $  & Dimension \\
	\hline
	&\\
	$ \sigma(X)=\{-1,1\} ;$	
	$ \lambda^2=1$&\\ 
	\cline{1-1}	
	$ \sigma(X)=\{-1\} ;$		
	$ \lambda=-1$& $\dim H^2_{h}(\mathcal{H}^1_{\lambda},ad_r) =10$
	\\ 
	\cline{1-1}	
	$ \sigma(X)=\{1\} ;$		
	$ \lambda=-1$&	
	\\
	& \\
	\hline
	&\\
	$\sigma(X)=\{\lambda,1\} ;  $
	$ \lambda^2\neq 1$&\\ 
	\cline{1-1}	
	$ \sigma(X)=\{\lambda\} ;$		
	$ \lambda^2\neq1$& $\dim H^2_{h}(\mathcal{H}^1_{\lambda},ad_r) =8$\\ 
	\cline{1-1}	
	$ \sigma(X)=\{1\} ;$		
	$ \lambda^2\neq1$&
	\\
	& \\
	\hline
	&	\\
	$\sigma(X)=\{\lambda^2,1\}; $
	$ \lambda\in\{j,j^2\}$&\multirow{1}*{$\dim H^2_{h}(\mathcal{H}^1_{\lambda},ad_r)= 7$}\\ 
	\cline{1-1}
	$\sigma(X)=\{\lambda,\lambda^2\};$ $\lambda\in\{j,j^2\}$ & 
	\\
	\hline
	&	\\
	$\sigma(X)=\{\mu,1\}; $ $ \mu^2\neq 1; $
	$ \lambda=1$ 
	& $\dim H^2_{h}(\mathcal{H}^1_{\lambda},ad_r)= 6$\\
	&
	\\
	\hline
\end{tabular}

\begin{tabular}{|c|c|}
	\hline	
	$\sigma(X)=\{1,\lambda^3\} ;  $
	$ \lambda\in\{-i,i\}$&\\ 
	\cline{1-1}	
	$ \sigma(X)=\{\lambda,\lambda^3\} ;$	$ \lambda\in\{-i,i\}$	
	& \\ 
	\cline{1-1}	
	$ \sigma(X)=\{-1,1\} ;$		
	$ \lambda\in\{-i,i\}$&
	\\
	\cline{1-1}	
	$ \sigma(X)=\{-1,\lambda\} ;$		
	$ \lambda\in\{-i,i\}$&$\dim H^2_{h}(\mathcal{H}^1_{\lambda},ad_r) =5$
	\\
	\cline{1-1}	
	$ \sigma(X)=\{1,\sqrt{r}e^{i\frac{\theta}{2}}\} ;$ $ \lambda=re^{i\theta} $; $
	\theta\in\intervallefo{0}{2\pi} $;		
	$ \lambda\notin\{-1,1,j^2\}$&\\
	\cline{1-1}	
	$ \sigma(X)=\{1,-\sqrt{r}e^{i\frac{\theta}{2}}\} ;$ $ \lambda=re^{i\theta} $; $ \theta\in\intervalleff{0}{2\pi} $		
	$ \lambda\notin\{-1,1,j\}$&\\
	\cline{1-1}	
	$ \sigma(X)=\{re^{i\theta},\sqrt{r}e^{i\frac{\theta}{2}}\} ;$ $ \lambda=re^{i\theta} $; $ \theta\in\intervalleff{0}{2\pi} $		
	$ \lambda\notin\{-1,1,j^2\}$&\\
	\cline{1-1}	
	$ \sigma(X)=\{re^{i\theta},-\sqrt{r}e^{i\frac{\theta}{2}}\} ;$ $ \lambda=re^{i\theta} $; $ \theta\in\intervalleff{0}{2\pi} $		
	$ \lambda\notin\{-1,1,j\}$&\\
	&\\
	\hline
	&\\
	$\sigma(X)=\{1,\frac{1}{\lambda}\} ;  $
	$ o(\lambda)>4$&\\ 
	\cline{1-1}	
	$ \sigma(X)=\{-1,1\} ;$	$ o(\lambda)>4$	
	& \\ 
	\cline{1-1}	
	$ \sigma(X)=\{1,\lambda^2\} ;$		
	$ o(\lambda)>4$&
	\\
	\cline{1-1}	
	$ \sigma(X)=\{-1,\lambda\} ;$		
	$ o(\lambda)>4$&
	\\
	\cline{1-1}	
	$ \sigma(X)=\{1,-\lambda \};$ 	
	$ o(\lambda)>4$&$\dim H^2_{h}(\mathcal{H}^1_{\lambda},ad_r) =4$\\
	\cline{1-1}	
	$ \sigma(X)=\{\lambda,\frac{1}{\lambda}\} ;$ $o (\lambda)>4 $;&\\
	\cline{1-1}	
	$ \sigma(X)=\{\lambda,-\lambda\} ;$		
	$o (\lambda)>4$&\\
	\cline{1-1}	
	$ \sigma(X)=\{\lambda,\lambda^2\} ;$		
	$ o(\lambda)>4$&\\	
	\cline{1-1}	
	$ \sigma(X)=\{\mu,\mu^2\} ;$		
	$ o(\mu)\in\{3,4,5\}$; $ \lambda=1 $&\\		
	\cline{1-1}	
	$ \sigma(X)=\{\mu,\mu^3\} ;$		
	$ o(\mu)=5$; $ \lambda=1 $&\\	
	&\\
	\hline
	\hline
$\sigma(X)=\{\mu,-1\} ;  $ $ \mu^2=-\lambda; $ $ \lambda^3=-1; $
$ o(\mu)\in\{3,6\}$&\\ 
\cline{1-1}	
$ \sigma(X)=\{\mu,1\} ;$	$ \lambda\notin\{1,\mu^2\}$; $ \mu\notin\{-1,1,\lambda,\frac{1}{\lambda},\lambda^2\} $	
& \\ 
\cline{1-1}	
$ \sigma(X)=\{\mu,\lambda\} ;$		$ \lambda\notin\{1,\mu,\mu^2,-\mu,\frac{1}{\mu}\}$; $ \mu\notin\{-1,1,\lambda,\lambda^2\} $		
&
\\
\cline{1-1}	
$ \sigma(X)=\{\lambda^5,\lambda^3\} ;$		
$ o(\lambda)=6$&
\\
\cline{1-1}	
$ \sigma(X)=\{\mu,-\lambda \};$ $ \lambda=-\mu^2 $	
$ o(\mu)=6$&\\
\cline{1-1}	
$ \sigma(X)=\{\mu,-\mu^2\} ;$ $ \lambda=-\mu$;  $ o(\mu)=3 $&\\
\cline{1-1}	
$ \sigma(X)=\{j,j^2\} ;$		
$\lambda=1$&$\dim H^2_{h}(\mathcal{H}^1_{\lambda},ad_r) =3$\\
\cline{1-1}	
$ \sigma(X)=\{-\lambda,\lambda^2\} ;$		
$ o(\lambda)=6$&\\	
\cline{1-1}	
$ \sigma(X)=\{\lambda^3,\lambda^5\} ;$		
$ o(\lambda)=6$	&\\
\cline{1-1}	
$ \sigma(X)=\{\lambda^2,\lambda^3\} ;$		
$ o(\lambda)=6$	
&\\
\cline{1-1}		
$\sigma(X)=\{\mu\} ;  $ $ \mu^3=\lambda; $ $ \lambda\neq1; $&\\ 
\cline{1-1}
$\sigma(X)=\{\mu\} ;  $ $ \mu^3=\lambda^2; $ $ \lambda\neq1; $&\\ 
&\\
\hline
\end{tabular}
\begin{tabular}{|c|c|}
\hline	
$\sigma(X)=\{\mu,\mu^3\} ;  $ $ \mu^5=\lambda $, $ o(\mu)>6 $	
&\\ 
\cline{1-1}	
$\sigma(X)=\{\mu,\mu^2\} ;  $ $ \mu^4=\lambda $, $ o(\mu)>4 $	
&\\ 
\cline{1-1}		
$\sigma(X)=\{\mu,\frac{\lambda}{\mu^2}\} ;  $ $ \mu^5=\lambda^2 $, $ \lambda\notin \{1,\mu,\mu^2\} $,   $ o(\mu)>3 $	
&\\ 
\cline{1-1}		
$\sigma(X)=\{\mu,\frac{1}{\mu^2}\} ;  $
$ \lambda=1 $,  $ o(\mu)>5 $	
&\\ 
\cline{1-1}			
$\sigma(X)=\{\mu,\lambda\mu\} ;  $
$ \lambda^3\neq 1 $,  $ o(\mu)=3 $	
&\\ 
\cline{1-1}		
$\sigma(X)=\{\mu,\frac{\mu^2}{\lambda}\} ;  $
$ \lambda\notin\{-\mu,1,\mu\} $,  $ o(\mu)>2$	
&\\ 
\cline{1-1}	
$\sigma(X)=\{\mu,\frac{\mu^2}{\lambda}\} ;  $
$ o(\mu)>3 $,  $ o(\frac{\mu}{\lambda})=3$	
&\\ 
\cline{1-1}
$\sigma(X)=\{\mu,\mu^2\} ;  $
$ o(\mu)>5 $,  $ \lambda=1$	
& $\dim H^2_{h}(\mathcal{H}^1_{\lambda},ad_r) =2$\\ 
\cline{1-1}	
$\sigma(X)=\{\mu,\frac{\mu^2}{\lambda}\} ;  $
$ \mu^4=\lambda^3 $,  $ \lambda\notin\{1,\mu^2\}$, $ \mu\neq 1 $	
&\\ 
\cline{1-1}	
$\sigma(X)=\{\mu,\frac{\mu^2}{\lambda}\} ;  $
$ \mu^3=\lambda^2 $,  $ \lambda\notin\{1,-\mu,\mu,\mu^2\}$, $ \mu\neq 1 $	
&\\ 
\cline{1-1}	
$\sigma(X)=\{\mu,\frac{\mu^2}{\lambda}\} ;  $
$ \mu^5=\lambda^4 $,  $ \lambda\notin\{1,\mu,\mu^2\}$, $ \mu^3\neq1$	
&\\ 
\cline{1-1}	
$\sigma(X)=\{-1,\mu\} ;  $
$ \mu^2=-\lambda $,  $ \lambda^3\neq-1$, 
&\\ 
\cline{1-1}	
$\sigma(X)=\{\mu,\mu^2\} ;  $
$ \mu^4=\lambda $,  $ o(\mu)>4$, 
&\\ 
\cline{1-1}	
$\sigma(X)=\{\mu,\mu^2\} ;  $
$ \mu^5=\lambda $,  $ o(\mu)>6$, 
&\\ 
\cline{1-1}	
$\sigma(X)=\{\mu,\mu^2\} ;  $
$ \lambda=1 $,  $ o(\mu)>5$, 
&\\ 
\cline{1-1}	
$\sigma(X)=\{\mu,\mu^2\} ;  $
,  $ o(\mu)=3$, $ \lambda\notin\{-\mu,1,\mu,\mu^2\} $, $ \lambda^2\neq \mu $
&\\
\cline{1-1}	
$\sigma(X)=\{\mu,\mu^2\} ;  $
,  $ -\mu^2=\lambda$, $ \lambda\neq1$, $ \lambda^3\neq-1 $
&\\
\cline{1-1}	
$\sigma(X)=\{\mu,\mu^2\} ;  $
,  $ \mu^5=\lambda^2$, $ \lambda\notin\{1,\mu\}$, $ o(\mu) >3$
&\\
\cline{1-1}	
$\sigma(X)=\{\mu\} ;  $
,  $ \mu^3=\lambda$, $ o(\mu) >3$
&\\
\cline{1-1}	
$\sigma(X)=\{\mu\} ;  $
,  $ \mu^3=\lambda^2$, $ \mu\neq1$
&\\

&\\
\hline
$\sigma(X)=\{\mu,\frac{\lambda}{\mu^2}\} ;  $
,  $ \lambda\notin\{-\mu^2,1,\mu,\mu^2,\mu^3,\mu^4,\mu^5\}$, $ o(\mu) >3$
&\\
\cline{1-1}	
$\sigma(X)=\{\mu,\frac{\mu^2}{\lambda}\} ;  $
$ \lambda\notin\{1,\mu^2,-\mu,\mu,j\mu,j^2\mu,\}$,&\\ $ \mu\neq1$, $ \mu^3\neq\lambda^2$, $ \mu^4\neq\lambda^3$, $ \mu^5\neq4$, , $ \mu^4\neq\lambda^2$, $ \mu^5\neq\lambda^3$&$\dim H^2_{h}(\mathcal{H}^1_{\lambda},ad_r) =1$\\
\cline{1-1}	
$\sigma(X)=\{\mu,\mu^2\} ;  $
$ \lambda\notin\{1,\mu,\mu^2,-\mu^2,\mu^3,\mu^4,\mu^5\}$,&\\ $ o(\mu)>3$, $ \mu^5\neq\lambda^2$,
&\\
\hline
in other cases&$\dim H^2_{h}(\mathcal{H}^1_{\lambda},ad_r) =0$\\
\hline				
\end{tabular}
 The following table gives the possibilities for 
 $ \dim H^2_{h}(\mathcal{H}^2_{\lambda},ad_r) $ according  to the matrix of case \eqref{hom2}\\

 \begin{tabular}{|c|c|}
 		\hline
 	Spectrum $ \sigma(X) $  & Dimension \\
 	\hline
 	\hline	
 	$\sigma(X)=\{1\} ;  $ $ \lambda=1 $,&  $\dim H^2_{h}(\mathcal{H}^1_{\lambda},ad_r) =8$\\
\hline 	
$\sigma(X)=\{-1\} ;  $ $ \lambda=-1 $,&  $\dim H^2_{h}(\mathcal{H}^1_{\lambda},ad_r) =4$\\
\hline  	
$\sigma(X)=\{1\} ;  $ $ \lambda\neq1 $,&  $\dim H^2_{h}(\mathcal{H}^1_{\lambda},ad_r) =3$\\
	\cline{1-1}
$\sigma(X)=\{\lambda\} ;  $ $ \mu^2\neq1 $,&\\	
\hline  	
$\sigma(X)=\{\mu\} ;  $ $ \lambda=1 $, $ o(\mu)=3 $&  $\dim H^2_{h}(\mathcal{H}^1_{\lambda},ad_r) =2$\\
\hline 
$\sigma(X)=\{-\lambda\} ;  $ $ \lambda^2=-1 $, $ o(\mu)=3 $&\\
\cline{1-1}
$\sigma(X)=\{\mu\} ;  $ $ \mu^3=\lambda $, $ \lambda\neq1 $& $\dim H^2_{h}(\mathcal{H}^1_{\lambda},ad_r) =1$\\
\cline{1-1}
$\sigma(X)=\{\mu\} ;  $ $ \mu^3=\lambda^2 $, $ \lambda\neq1 $, $ \mu^2\neq1 $&\\
 	\hline				
 \end{tabular}
  \subsection{Second cohomology group of
  	  $ \mathcal{H}^m_\lambda\quad (m>2)$ }
Let $f_{i,j},\ g_{i,j},\  h_{i,j},\ l_{i},\ t_{i},\  m_{i}   $
 be a skew-symmetric bilinear functions  defined respectively by \\
\begin{tabular}{ccc}
$f_{i,j}(x_j,y_i)=\alpha^r(z);$&
$g_{i,j}(x_i,x_j)=\alpha^r(z);$&
$h_{i,j}(y_i,y_j)=\alpha^r(z);$\\
$l_{i}(x_p,y_p)=-\alpha^r(x_i);$& $ l_i(y_i,z)=-\alpha^r(z) $;&\\
$t_{i}(x_p,y_p)=-\alpha^r(y_i) ;$& $t_{i}(x_i,z)=\alpha^r(z) ;$& \\
$m(x_p,y_p)=-\alpha^r(z)$.&  
\end{tabular}

The image of the other basic elements is zero or deduced by skew symmetric.
\begin{lem}
With above notations, associated to the adjoint representation of the Heisenberg  Hom-Lie algebra $ \mathcal{H}^m_\lambda $, we have 
\begin{align*}
B^2(\mathcal{H}^m_\lambda,\mathcal{H}^m_\lambda)&=
\bigoplus_{1\leq i,j\leq m}<f_{i,j}>\bigoplus_{1\leq i<j\leq m}<g_{i,j}>\bigoplus_{1\leq i<j\leq m}<h_{i,j}>\\
&\bigoplus_{1\leq i\leq m}<l_{i}>\bigoplus_{1\leq i\leq m}<t_{i}>\bigoplus<m>
\end{align*}
\end{lem}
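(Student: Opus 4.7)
The plan is to compute $\delta^1_r(T)$ for a general $1$-Hom-cochain $T:\mathcal{H}^m_\lambda\to\mathcal{H}^m_\lambda$, match the output against a linear combination of the listed generators, and conversely realize each generator as the coboundary of an explicit cochain. Specializing the formula \eqref{def cobbb} at $k=1$ to the adjoint representation yields
\begin{equation*}
\delta^1_r(T)(a,b)=-T\bigl([a,b]\bigr)+\bigl[\alpha^r(a),T(b)\bigr]-\bigl[\alpha^r(b),T(a)\bigr].
\end{equation*}
Because $[\cdot,\cdot]$ has image in $\C z$ and $z$ is central, both bracket terms always lie in $\C z$, while the term $-T([a,b])=-\delta_{pq}T(z)$ (when $(a,b)=(x_p,y_q)$) is the only piece that can carry components outside $\C z$.

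The computation organizes itself by the type of input pair. Writing $T$ in the block form compatible with the basis $(x_1,\dots,x_m,y_1,\dots,y_m,z)$ of Corollary \ref{hm}, and evaluating $\delta^1_r(T)$ on each type of basis pair, one expects to find the following pattern: $\delta^1_r(T)(x_p,x_q)$ contributes a $\C z$-valued expression matching a combination of the $g_{i,j}$; $\delta^1_r(T)(y_p,y_q)$ a combination of $h_{i,j}$; $\delta^1_r(T)(x_p,y_q)$ with $p\neq q$ matches contributions of type $f_{i,j}$ and $k_{i,j}$; the diagonal case $\delta^1_r(T)(x_p,y_p)$ picks up the extra $-T(z)$ term, whose components along $E$, $E^*$ and $\C z$ produce the $l_i$, $t_i$, and $m$ generators; and the values on $(x_p,z)$, $(y_p,z)$ are forced by $\alpha$-compatibility to match the $l_i,t_i$ contributions already accounted for.

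To see each generator is realized, I would exhibit elementary cochains: $T$ sending $x_j\mapsto x_i$ and zero elsewhere produces $f_{i,j}$; $y_j\mapsto y_i$ produces $k_{i,j}$; $x_j\mapsto y_i$ produces the $g$-type after skew-symmetrization; $y_j\mapsto x_i$ produces the $h$-type; while $z\mapsto x_i$, $z\mapsto y_i$, $z\mapsto z$ yield $l_i$, $t_i$, $m$ respectively. In each case a direct bracket computation, using that $[x_p,y_q]=\delta_{pq}z$ and that other brackets vanish, confirms the claimed values of $\delta^1_r$.

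The main obstacle is the compatibility condition $T\circ\alpha=\alpha\circ T$ needed for $T$ to be a genuine $1$-Hom-cochain: the elementary cochains above satisfy this automatically only when $\alpha$ acts as a scalar on the relevant indices. When $X$ and $Y$ have non-trivial Jordan structure, one must either restrict the generators to index pairs lying in the same Jordan block of $\alpha$, or form suitable linear combinations that commute with $\alpha$; the essential point to verify is that the span of coboundaries $\delta^1_r(T)$ over all compatible $T$ still coincides with the span of the seven families. Linear independence, on the other hand, is straightforward: each family is distinguished either by which basis pair $(a,b)$ carries its nonzero value, or by the direction of the output in $E\oplus E^*\oplus\C z$, so a nonzero linear combination of distinct generators cannot vanish identically.
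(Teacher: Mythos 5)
The paper states this lemma without proof, so there is nothing to compare your argument against; judged on its own terms, your strategy is the right one. The specialization $\delta^1_r(T)(a,b)=-T([a,b])+[\alpha^r(a),T(b)]-[\alpha^r(b),T(a)]$ is correct, the observation that the two bracket terms always land in $\C z$ while only $-\delta_{pq}T(z)$ can leave $\C z$ is the key structural point, and your elementary cochains do generate the right span. To hit the named generators on the nose you should twist by $\alpha^r$: for instance $T:z\mapsto\alpha^r(x_i)$ gives exactly $l_i$, because $[\alpha^r(y_p),\alpha^r(x_i)]=\alpha^r([y_p,x_i])=-\delta_{pi}\alpha^r(z)$, whereas $T:z\mapsto x_i$ produces $-(Y^r)_{ip}z$ on the pair $(y_p,z)$, which is only a linear combination of the $l_j$; since $X$ is invertible this is just a change of basis within each family, but it should be said. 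You also never actually carry out the surjectivity direction (that every $\delta^1_r(T)$ lies in the listed span), though the case analysis you outline does close without surprises.

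There are two genuine gaps. First, the compatibility problem you raise at the end is real and you leave it unresolved, which matters because under the restricted reading the statement is simply false: $T:z\mapsto\alpha^r(x_i)$ commutes with $\alpha$ only if $\alpha^r(x_i)$ is a $\lambda$-eigenvector of $\alpha$, so for generic $X$ most of the $l_i$, $t_i$ (and likewise most of the $f_{i,j},g_{i,j},h_{i,j},k_{i,j}$) would fail to be coboundaries of $\alpha$-compatible cochains. You must commit to the reading that the unsubscripted $B^2$ is the image of $\delta^1_r$ on all of $C^1(\mathcal{H}^m_\lambda,\mathcal{H}^m_\lambda)$ --- consistent with the paper's distinction elsewhere between $B^1$ and $B^1_{r,\beta}$ --- after which the obstacle disappears and no restriction to Jordan blocks is needed. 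Second, your linear-independence argument (``each family is distinguished by which basis pair carries its nonzero value, or by the direction of the output'') does not withstand inspection of the definitions: $f_{i,j}$ and $k_{j,i}$ are the \emph{same} function (both supported on $(x_j,y_i)$ with value $\alpha^r(z)$), $g_{i,j}=-g_{j,i}$ and $g_{i,i}=0$ by skew-symmetry, and $m=-\lambda^{-r}\sum_p k_{p,p}$, so the displayed sum is not direct. A correct proof should either establish only the equality of spans --- the span being $\Lambda^2(E\oplus E^*)^*\otimes\C z\,\oplus\,\langle l_1,\dots,l_m,t_1,\dots,t_m\rangle$, of dimension $2m^2+m$ --- or repair the generating set before asserting directness.
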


\begin{thm}
	The space $ 	H^2(\mathcal{H}^m_\lambda,\mathcal{H}^m_\lambda) $ is generated by \[ \{f_{j,(i,l)},g_{i,(l,j)},h_{j,(i,l)},k_{i,(j,l)},  u_{i,(j,l)}, a_{l,(i,j)},  b_{i,(j,l)},  c_{j,(i,l)}, f_{j},h_j, k_{j}, a_{j}
	  \} \]
	where,\\	
\begin{tabular}{rrrr}
 	$f_{j,(i,l)}(x_i,x_j)=\alpha^{r+1}(y_l),$& 	
	&$f_{j,(i,l)}(x_i,x_l)=\alpha^{r+1}(y_j)$& $(i\neq j,\ i\neq l,\ j\neq l)  $;\\
	 	$g_{i,(l,j)}(x_i,x_j)=\alpha^{r+1}(y_l),$& 
		&$g_{i,(l,j)}(x_l,x_j)=\alpha^{r+1}(y_i)$&$(i\neq j,\ i\neq l,\ j\neq l)  $;\\
	$	h_{j,(i,l)}(x_i,x_j)=\alpha^{r+1}(x_l)$,& 	
	&	$	h_{j,(i,l)}(x_j,y_l)=\alpha^{r+1}(y_i)$&$(i\neq j,\ i\neq l,\ j\neq l)  $;\\	
	$	k_{i,(j,l)}(x_i,y_l)=\alpha^{r+1}(y_j)$,& 	
	&	$	k_{i,(j,l)}(x_j,y_l)=\alpha^{r+1}(y_i)$&$(i\neq j,\ i\neq l,\ j\neq l)  $;\\	
	$	u_{i,(j,l)}(y_j,y_l)=\alpha^{r+1}(y_i)$&
	&	$	u_{i,(j,l)}(x_i,y_j)=	\alpha^{r+1}(x_l)$,&$(i\neq j,\ i\neq l,\ j\neq l)  $;\\
$a_{l,(i,j)}(y_i,y_l)=\alpha^{r+1}(x_j)$,& 
&$	a_{l,(i,j)}(y_i,y_j)=\alpha^{r+1}(x_l)$&$(i\neq j,\ i\neq l,\ j\neq l)  $;\\
$b_{i,(j,l)}(y_j,y_l)=\alpha^{r+1}(x_i),$& 	&$b_{i,(j,l)}(y_i,y_l)=\alpha^{r+1}(x_j)$&$(i\neq j,\ i\neq l,\ j\neq l)  $;\\
$	c_{j,(i,l)}(x_i,y_l)=\alpha^{r+1}(x_j)$& 
&	$	c_{j,(i,l)}(x_i,y_j)=	\alpha^{r+1}(x_l)$,&$(i\neq j,\ i\neq l,\ j\neq l)  $;\\	
			$f_{j}(x_j,y_p)=\alpha^{r+1}(y_p),$& 
			&$f_{j}(x_p,z)=y_{p,j}\alpha^{r}(z)$&$ (\forall p\in \{1,\cdots,m\}) $;\\
			$h_{j}(x_p,x_j)=\alpha^{r+1}(x_p)$,& 
	&	$h_{j}(x_p,z)=y_{p,j}\alpha^{r}(z)$&$ (\forall p\in \{1,\cdots,m\}) ;$\\
$ k_{j}(x_p,y_j)=\alpha^{r+1}(x_p)$,& 
&$ k_{j}(y_p,z))=x_{p,j}\alpha^{r}(z)$& $( \forall p\in \{1,\cdots,m\})  $:\\
	$	a_{j}(y_j,y_p)=\alpha^{r+1}(y_p)$,& 
	&$a_{j}(y_p,z)=x_{p,j}	\alpha^{r}(z),$& $ (\forall p\in \{1,\cdots,m\}) , $
\end{tabular} 
	The image of the other basic elements is zero or deduced by skew symmetric.	
\end{thm}
\begin{cor}
	\[ \dim H^2(\mathcal{H}^m_\lambda,\mathcal{H}^m_\lambda) =4m(2m^2-6m+5)\qquad (m>2).\]
\end{cor}

\end{document}